\newcommand{\leqnomode}{\tagsleft@true}
\newcommand{\reqnomode}{\tagsleft@false}
\newcommand{\N}{\mathbb{N}}
\newcommand{\lmin}{\mathrm{lmin}}
\newcommand{\lmax}{\mathrm{lmax}}
\theoremstyle{thmit}
\newtheorem{theorem}{Theorem}[section]
\newtheorem{definition}[theorem]{Definition}
\newtheorem{proposition}[theorem]{Proposition}
\newtheorem{lemma}[theorem]{Lemma}
\newtheorem{corollary}[theorem]{Corollary}
\theoremstyle{definition}
\newtheorem{example}[theorem]{Example}
\newtheorem{remark}[theorem]{Remark}
\title{Local operator system structures and their tensor products}
\date{\today}
\author[S. Beniwal]{Surbhi Beniwal}
\address{Department of Mathematics\\ University of Delhi\\ Delhi-110007, INDIA}
\email{surbhinrw@gmail.com }
\author[A.Kumar]{Ajay Kumar}
\address{Department of Mathematics\\ University of Delhi\\ Delhi-110007, INDIA}
\email{ak7028581@gmail.com}
\author[P. Luthra]{Preeti Luthra\textsuperscript{*}}
\address{Department of Mathematics, Mata Sundri College for Women\\ University of Delhi\\ New Delhi-110002, INDIA}
\email{maths.preeti@gmail.com}
\keywords{Local operator systems, Local operator spaces, tensor products, Projective limits, operator systems.\\ 
\noindent\textit{Mathematics Subject Classification (2020): Primary
  46L06, 46L07; Secondary 46M40 }}
  \thanks{ \textsuperscript{*}Corresponding author}
\begin{document}
 

\begin{abstract}
We introduce and explore the theory of tensor products in the category of local operator systems. Analogous to minimal operator system OMIN and maximal operator system OMAX, minimal and maximal local operator system structures LOMIN and LOMAX, respectively, are also discussed. 

\end{abstract}
\maketitle
\section{Introduction}

The study of locally $C^*$-algebra was initiated by Inoue in \cite{inoue1972locally}. It was introduced as a complete locally m-convex $*$-algebra with $C^*$-condition where topology is defined by a family of $C^*$-norms and was proved to be isomorphic to a closed $*$-subalgebra of the operator algebra in a generalized Hilbert space. Arveson called these algebra as Pro $C^*$-algebra (see \cite{frago2005top}), which can be represented as projective limit of $C^*$-algebras. Their morphisms and tensor products in this category have been extensively studied (see \cite[Section 31]{frago2005top}). Further this theory was extended to operator spaces by Webster in \cite{webster1997local}, and their tensor products were also introduced and explored \cite{4,5,webster1997local}. Dosiev in \cite{dosiev2008local} revisited local operator spaces, introduced local operator systems in a concrete way and gave a locally convex version the Choi-Effros representation theorem. In \cite{anardosi}, Dosiev used quantum cones with filtered base to classify local operator systems, which he called as quantum systems. Again local operator systems can be represented as projective limits of operator systems. Now, in the last decade the Paulsen et. al. \cite{kavruk2011tensor} introduced the abstract definition of operator systems and initiated a systematic study of their tensor products. This research contributed a lot in the field of quantum information theory. Operator systems are considered as quantum versions of function systems.

In this paper, we give a short proof of the fact that the abstract definition of local operator systems is equivalent to the definition of local operator systems. In the same section, we show that local operator systems possess the structure of local operator spaces. We also prove that these abstract local operator systems are in fact the projective limit of abstract operator systems.

In \cite{8}, the authors associated with any Archimedean order unit space $V,$ two
operator systems, the minimal OMIN(V) and the maximal OMAX(V). In Section \ref{s:3}, we consider a parallel development for local operator systems. 

Motivated by the theory of tensor products of operator systems, in Section \ref{s:4} we give the definition of tensor product in the category of local operator systems. We introduce minimal and maximal local operator system tensor products and explore their properties. 

\section{Preliminaries}\label{s:1}
In paper \cite{dosiev2008local}, it was proved that every local operator space can be realized as a linear space of unbounded operators on a Hilbert space and gave the following theory: For a fixed Hilbert space $H$, an upward filtered family of closed subspaces $\mathcal{E} = \{H_{\alpha}\}_{\alpha \in \Lambda}$ such that their union $\mathcal{D}$ is a dense subspace in $H$ with $p =  \{P_{\alpha}\}_{\alpha \in \Lambda}$ family of projections in $B(H)$ onto the subspaces $H_{\alpha},\alpha \in \Lambda$. The algebra $C_\mathcal{E} (\mathcal{D})$ of all non-commutative continuous functions on a quantized domain $E$ is given by $$C_\mathcal{E} (\mathcal{D}) = \{T \in L(\mathcal{D}): TP_{\alpha}= P_{\alpha}TP_{\alpha} \in B(H), \alpha \in \Lambda\},$$
where $L(\mathcal{D})$ is the associative algebra of all linear transformations on $\mathcal{D}$ and $\alpha \leq \beta$ whenever $P_{\alpha} \leq P_{\beta}$, so that $H_{\alpha} \subseteq H_{\beta}$. Thus each $T \in C_\mathcal{E} (\mathcal{D})$ is an unbounded operator on $H$ with domain $\mathcal{D}$ such that $T (H_{\alpha}) \subseteq H_{\alpha}$ and $T|_{H_{\alpha}} \in B(H_{\alpha})$, and $C_\mathcal{E} (\mathcal{D})$ is a subalgebra in $L(\mathcal{D})$. The set
$$C^*_\mathcal{E} (\mathcal{D}) =\{T \in C_\mathcal{E} (\mathcal{D}) \; : \; P_{\alpha}T \subseteq TP_{\alpha}, \alpha \in \Lambda\}$$
of all non-commutative continuous functions on $E$ is a unital $*$-subalgebra of $C_\mathcal{E}(\mathcal{D})$, with the involution $T^* = T^{\bigstar}|_D \in  C^*_\mathcal{E} (\mathcal{D})$ for all $T \in C^*_\mathcal{E}  (\mathcal{D})$ where $T^{\bigstar}$ is unbounded dual of $T$ such that $\mathcal{D} \subseteq \mathrm{dom}(T^{\bigstar})$ and $T^{\bigstar}(\mathcal{D}) \subseteq \mathcal{D}$.

A \emph{concrete local operator space}  is defined as a subspace of $C_{\mathcal{E}}(\mathcal{D})$, whereas a \emph{concrete local operator system} $\mathcal{S}$ as a unital self-adjoint subspace of the multinormed $C^*$-algebra $C^*_\mathcal{E} (\mathcal{D})$. 

Any linear space $\mathcal{E}$ with a separated family of matrix seminorms $\{p_{\alpha}:\alpha \in \Gamma\}$ is called an \emph{abstract local operator space}. These two definitions, abstract and concrete, of local operator spaces were proved equivalent in \cite[Theorem 7.1]{dosiev2008local}. 

Let us establish some more terminologies related to local positivity that shall be used throughout:
\begin{enumerate}
\item An
element $T$ of a local operator system $\mathcal{S}$ is \emph{local hermitian} if $T = T^*$ on a certain subspace $H_{\alpha}$,
that is, $T |_{H_{\alpha}} = T^*|_{H_{\alpha}} = (T |_{H_{\alpha}})^* \in B(H_{\alpha})$. Respectively, an element $T \in \mathcal{S}$ is said to be \emph{local
positive} if $T |_{H_{\alpha}} \geq 0$ in $B(H_{\alpha})$ for some $\alpha \in \Lambda$.
\item  Further, let $\mathcal{S} \subseteq C^*_\mathcal{E} (D)$ and $\mathcal{T} \subseteq C^*_\mathcal{F} (O)$ be
local operator systems on quantized domains $\mathcal{E} = \{H_{\alpha}\}_{\alpha \in \Lambda}$ and $F = \{K_{i}\}_{i \in \omega}$ with their union
spaces $D$ and $O$, respectively. A linear mapping $\phi :\mathcal{S} \rightarrow \mathcal{T}$ is said to be local matrix positive
if for each $i \in \omega$ there corresponds $\alpha \in \Lambda$ such that $\phi^{(n)}(v)|_{K^n_i} \geq 0$ whenever $v|_{H^n_{\alpha}} \geq 0$, and
 $\phi^{(n)}(v)|_{K^n_i} = 0$ whenever $v|_{H^n_{\alpha}} = 0$, $v \in M_n(\mathcal{S})$, $n \in \mathbb{ N}$, where $\phi^{(n)} :M_n(\mathcal{S} )\rightarrow M_n(\mathcal{T})$, $n \in \mathbb{ N}$, are the
canonical linear extensions of $\phi$ over all matrix spaces.
\end{enumerate}

As we are dealing with projective limit of operator sytem so we refer the reader to \cite{paulsen,2009vector,8,kavruk2011tensor} for a detailed theory on operator systems, their structures and tensor products.

\section{Abstract local operator systems}\label{s:2}
We start by defining a local version of ordered $*$-vector space, which can also be found in \cite{asadi}.

           \begin{definition}
           Let $V$ be a $\ast$-vector space consisting of downward filtered family of cones $\{\mathcal{C}_\alpha$ : $\alpha \in \Gamma\}$ satisfying two properties
            $\mathcal{C}_\alpha$ is a cone (need not be proper) in $V$ where $\mathcal{C}_\alpha \subseteq V_h=\{v\in V : v^{*}=v\}$.
            and $\displaystyle \bigcap_\alpha(\mathcal{C}_\alpha \cap -\mathcal{C}_\alpha)=\{0\}.$ 
           
 Then $V$ is called \emph{local $*$-ordered vector space} and the elements of $\mathcal{C}_\alpha$ are called local positive elements, denoted by $v\geq_\alpha0$. Also, we write $v_1\geq_\alpha v_2$ if $v_1-v_2\geq_\alpha 0$ in $V$.

          \end{definition} 
    \begin{definition}
           For a local $*$-ordered vector space $(V,\{\mathcal{C}_\alpha : \alpha \in \Gamma\})$, an element $e\in V_{h}$ is called \emph{an ordered unit for $V$ }if for all $v\in V_{h}$ and for every $\alpha \in \Gamma$ there exists $r_\alpha\textgreater 0$ such that $r_\alpha e$ $\geq_\alpha$ $v$. If, in addition, $e$ is an Archimedean order unit, we call the triple $(V,\{\mathcal{C}_\alpha \;:\;\alpha \in \Gamma\},e)$ \emph{an Archimedean local $\ast$-ordered vector space} or in short \emph{A.L.O.U space}.
        \end{definition}
        
If $V$ is a $*$-vector space then $M_n(V)$ is also a $*$-vector space. For $A=[v_{ij}]_{n \times n} \in M_n(V)$ we have $A^{\ast}=[v^{\ast} _{ji}]$ is an involution on $M_n(V)$ and $M_n(V)_{h}=\{A \in M_n(V) : A^{\ast}=A\}$.

           \begin{definition}
           Let $V$ be a $*$-vector space. We say that the family $\left\{\{\mathcal{C}_\alpha ^n\}_{n=1} ^{\infty} \; : \; \alpha \in \Gamma\right\}$ is a \emph{local matrix ordering on $V$} if 
           \begin{enumerate}
           \item $(M_n(V),\{\mathcal{C}_\alpha^n: \alpha \in \Gamma\})$ is a local $*$-ordered vector space for each $n \in \N,$ 
           \item for each $n,m\in \mathbb{N}$ and $X \in M_{n,m}$ and all $\alpha$, we have that $X^*\mathcal{C}_\alpha ^n X \subseteq \mathcal{C}_\alpha ^m$. 
           \end{enumerate}
           In this case, we call $\left(V, \left\{\{\mathcal{C}_\alpha ^n\}_{n=1} ^{\infty}\; :\; \alpha \in \Gamma\right\}\right)$ a local matrix $*$-ordered vector space.
        \end{definition}

            For $e\in V_h,$ let $e_n= \mathrm{diag}(e,e,\ldots,e)$ be the corresponding diagonal matrix in $M_n(V)$. We say that $e$ is \emph{a matrix order unit for $V$} if $e_n$ is an order unit for $\left(M_n(V),\left\{\{\mathcal{C}_\alpha ^n \; : \; \alpha \in \Gamma\}\right\}\right)$ for each $n$. We say that $e$ is an \emph{Archimedean matrix order unit} if $e_n$ is an Archimedean order unit for $\left(M_n(V),\left\{\{\mathcal{C}_\alpha ^n \; :\; \alpha \in \Gamma\}\right\}\right)$ for each $n$. And call, $\left(V, \left\{\{\mathcal{C}_\alpha ^n\}_{n=1} ^{\infty}; \alpha \in \Gamma\right\},e\right)$ \emph{an Archimedean local matrix ordered vector space}.

           	\begin{definition}\label{localopsys}
           	An \emph{abstract local operator system} is a triple $\left(V, \left\{\{\mathcal{C}_\alpha ^n\}_{n=1} ^{\infty}; \alpha \in \Gamma\right\},e\right)$, where $V$ is a local $*$-ordered vector space, $\left\{\{\mathcal{C}_\alpha ^n\}_{n=1} ^{\infty}\; : \;\alpha \in \Gamma\}\right\}$ is a local matrix ordering on $V$ and $e$ is an Archimedean matrix order unit. 
           	\end{definition}
           	
           	As should be the case, we first observe that local operator systems defined in the sense of \cite{dosiev2008local} are also abstract local operator systems in the sense of definition \ref{localopsys}. 
           	
           	\begin{remark}\label{concisabs} Every concrete local operator system is abstract local operator system.\\
                  	 
                  	 \end{remark}
           
           	\begin{example}\label{3.6} We now list some more examples of local operator systems:
           		\begin{enumerate}[(i)]
           			\item Every operator system is a local operator system.
           	\item Consider the set $C(\mathbb{R})$, the space of all complex valued continuous function S defined on $\mathbb{R}$.
           	For every compact subset $K$ of $\mathbb{R}$, we define a family of cones as $C_{K} =\{f \in  C(\mathbb{R})_{h}$ : $f(x) \geq 0 \forall x\in K \}$ and Archimedean matrix order unit is I($x$)=1 $\forall x \in \mathbb{R}$, then $\left(C(\mathbb{R}),\left\{\{C_{K}^n\}: K\subseteq\mathbb{R}, K \text{is compact} \right\}, I\right)$ is a local operator system.
           	
		\item Let $H$ be Hilbert space, $\mathcal{D}$ is dense subspace in H and $C_{\mathcal{E}}^*(\mathcal{D})$ be as defined in Remark \ref{concisabs}. Take $T \in C_{\mathcal{E}}^*(\mathcal{D})$ we have $\mathcal{LOS}(T)= \mathrm{span} \{I, T, T^*\}$ is a local operator subsystem of $C_{\mathcal{E}}^*(\mathcal{D})$.  
	\item	Let $M_\infty=\{[a_{ij}]: a_{ij}\in \mathbb{C}; i,j \in \mathbb{N}\}$.
		For each $\alpha_n=n\in \mathbb{N}$, we have $C_{\alpha_n}^1=\{[a_{ij}]\in M_\infty: [a_{ij}]\in M_n^+$ for $1\leq i,j \leq n \}$. For higher orders, we have $C_{\alpha_n}^m=\{[A_{ij}]\in M_m(M_\infty): [A_{ij}|_n] \geq 0\}$ where $A_{ij}|_n $ denotes restriction to $n \times n$ coordinates. With Archimedean local matrix order unit $I=[e_{ij}]$, where $e_{ij}=\delta_{ij}$ is Archimedean local matrix order unit, clearly $M_\infty$ is a local operator system.
		\item We have another local operator subsystem of $M_\infty$ which is tridiagonal matrices i.e. $T_\infty=\mathrm{span}\{E_{ij}: |i-j|\leq 1\}$.
		\item 
		 On the similar lines of remark 5.19 in \cite{kavruk2011tensor}, we can define graph $G_\infty$ on $\mathbb{N}$ which can be identified with a subset $G_\infty \subseteq \mathbb{N}\times \mathbb{N}$ satisfying the properties that $(i,j)\in G_\infty$ whenever $(j,i)\in G_\infty$ and $(i,i)\in G$ and $S(G_\infty)=\mathrm{span}\{E_{ij}:(i,j)\in G_\infty\}\subseteq M_\infty$ is a local operator system.
		\item 	We have an important algebra given by Cuntz which is Cuntz algebra. The Cuntz algebra $\mathcal{O}_n$ ($n\geq2$) is the universal C*-algebra generated by n isometries $S_1,S_2,...,S_n$ with $\sum_{i=1}^{n}S_iS_i^*=I$ where $I$ is the identity operator, for details refer to \cite{cuntz}. On the similar lines we tried to define a Pro C*-algebra as follows. Let $\mathcal{D}$ be a quantized domain of separable Hilbert spaces $H_\alpha$ where $\alpha \in \Gamma$. Let $\mathcal{O}^l_n$ denotes the Cuntz Pro C*-algebra generated by n elements $S_1,S_2,..,S_n$ having the properties $S_i^*S_i|_{H_\alpha}=I_{H_\alpha}$ for all $i$ and for all $\alpha$  
			 and $\sum_{i=1}^{n}S_iS_i^*|_{H_\alpha}=I_{H_\alpha}$ for all $\alpha$
		We now define Cuntz local operator system.
	Let $\mathcal{O}^l_n$ be Cuntz Pro $C^*$-algebra generated by n elements $S_1,S_2,..,S_n$ having the above two properties. Then we denote by $\mathcal{SO}^l_n$ the Cuntz local operator system defined as 
		$\mathcal{SO}^l_n$:= span \{$I,S_1,S_2,..,S_n,S_1^*,S_2^*,..,S_n^*\}$.
	\end{enumerate}
           \end{example}
           
   
  In the next result we show that with respect to an induced family of seminorms, local operator systems possess the structure of local operator space.    
   
   \begin{proposition}\label{seminorm} Let $V$ be a local operator system with family of cones $\left\{\{\mathcal{C}_\alpha ^n\}_{n=1} ^{\infty}\; : \; \alpha \in \Gamma\}\right\}$ and $e$ is Archimedean matrix order unit and for each $X \in M_n(V) $ set $\Vert X \Vert_{\alpha}^n= \inf\left\{r\geq 0:\begin{pmatrix}
       re_n & X\\
       X^{*} & re_n
       \end{pmatrix} \in \mathcal{C}_\alpha ^{2n} \right\},$ then $\Vert \cdot \Vert_\alpha^n $ is a separating family of $\ast$-seminorms on $M_n(V)$ and $\mathcal{C}_\alpha ^n$ is a closed subset of $M_n(V)$ in the topology induced by this separating family of $\ast$-seminorms. Hence, $\left\{V, \left\{\|\cdot\|_\alpha^n\right\}_{n=1}^\infty\right\}$ is a local operator space.
    \end{proposition}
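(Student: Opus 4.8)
The plan is to carry out, separately for each index $\alpha\in\Gamma$, the standard passage from a matrix order unit to operator space matrix norms (the Choi--Effros construction, or \cite{8}), the only new feature being that $\mathcal{C}_\alpha^n$ need not be proper, so one obtains seminorms rather than norms; the resulting fibres are then glued together using the separation axiom $\bigcap_\alpha(\mathcal{C}_\alpha^{m}\cap-\mathcal{C}_\alpha^{m})=\{0\}$. First, for fixed $\alpha$ and $X\in M_n(V)$, applying the order-unit property of $e_{2n}$ for $(M_{2n}(V),\mathcal{C}_\alpha^{2n})$ to the element $-\bigl(\begin{smallmatrix}0&X\\X^{*}&0\end{smallmatrix}\bigr)\in M_{2n}(V)_h$ produces an $r>0$ with $\bigl(\begin{smallmatrix}re_n&X\\X^{*}&re_n\end{smallmatrix}\bigr)\in\mathcal{C}_\alpha^{2n}$, so the infimum defining $\|X\|_\alpha^n$ is over a non-empty set and is finite; applying the same property to $0$ and rescaling shows $re_{2n}\in\mathcal{C}_\alpha^{2n}$ for every $r>0$, whence $\|0\|_\alpha^n=0$.

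That each $\{\|\cdot\|_\alpha^n\}_{n=1}^\infty$ is a matrix $*$-seminorm in the sense of \cite{dosiev2008local,webster1997local} is then routine and uses only that $\mathcal{C}_\alpha^k$ is a cone and that $W^{*}\mathcal{C}_\alpha^k W\subseteq\mathcal{C}_\alpha^{\ell}$ for every scalar $W\in M_{k,\ell}$: adding two witnessing matrices gives subadditivity; conjugating by $\mathrm{diag}(\lambda I_n,I_n)$ with $|\lambda|=1$ (plus positive homogeneity) gives $\|\lambda X\|_\alpha^n=|\lambda|\,\|X\|_\alpha^n$; conjugating by the flip $\bigl(\begin{smallmatrix}0&I_n\\I_n&0\end{smallmatrix}\bigr)$ gives $\|X^{*}\|_\alpha^n=\|X\|_\alpha^n$; compressing the $2(n+m)\times2(n+m)$ witness for $X\oplus Y$ by the two coordinate projections (and a permutation) gives $\|X\oplus Y\|_\alpha^{n+m}=\max\{\|X\|_\alpha^n,\|Y\|_\alpha^m\}$; and, after normalizing $\|A\|=\|B\|=1$ and replacing the diagonal blocks $Ae_nA^{*}$, $B^{*}e_nB$ by $e_m$ (legitimate since $e_m-Ae_nA^{*}\in\mathcal{C}_\alpha^m$, writing $I_m-AA^{*}=CC^{*}$ for a scalar $C$ and using $Ce_mC^{*}\in\mathcal{C}_\alpha^m$), conjugation by $\mathrm{diag}(A^{*},B)$ gives $\|AXB\|_\alpha^m\le\|A\|\,\|X\|_\alpha^n\,\|B\|$.

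For the separation property, suppose $\|X\|_\alpha^n=0$ for every $\alpha$. For a fixed $\alpha$ this says $\varepsilon e_{2n}+\bigl(\begin{smallmatrix}0&X\\X^{*}&0\end{smallmatrix}\bigr)\in\mathcal{C}_\alpha^{2n}$ for all $\varepsilon>0$, so the Archimedean property of $e_{2n}$ for $\mathcal{C}_\alpha^{2n}$ gives $\bigl(\begin{smallmatrix}0&X\\X^{*}&0\end{smallmatrix}\bigr)\in\mathcal{C}_\alpha^{2n}$; running the same argument for $-X$ gives $\bigl(\begin{smallmatrix}0&X\\X^{*}&0\end{smallmatrix}\bigr)\in\mathcal{C}_\alpha^{2n}\cap-\mathcal{C}_\alpha^{2n}$, and since this holds for all $\alpha$, the axiom $\bigcap_\alpha(\mathcal{C}_\alpha^{2n}\cap-\mathcal{C}_\alpha^{2n})=\{0\}$ in $M_{2n}(V)$ forces $X=0$. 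Hence $\{\|\cdot\|_\alpha^n:\alpha\in\Gamma\}$ is a separating family of $*$-seminorms on $M_n(V)$, the induced topology is Hausdorff, and $M_n(V)_h$ is closed in it (the involution is isometric for each seminorm). For closedness of $\mathcal{C}_\alpha^n$, take a net $X_\lambda\in\mathcal{C}_\alpha^n$ with $X_\lambda\to X$; then $X=X^{*}$, and given $\varepsilon>0$ choose $\lambda$ with $\|X-X_\lambda\|_\alpha^n<\varepsilon$, so (enlarging the witnessing radius to $\varepsilon$) $\bigl(\begin{smallmatrix}\varepsilon e_n&X-X_\lambda\\X-X_\lambda&\varepsilon e_n\end{smallmatrix}\bigr)\in\mathcal{C}_\alpha^{2n}$; compressing by $\bigl(\begin{smallmatrix}I_n\\I_n\end{smallmatrix}\bigr)$ and dividing by $2$ gives $\varepsilon e_n+(X-X_\lambda)\in\mathcal{C}_\alpha^n$, hence $\varepsilon e_n+X=(\varepsilon e_n+X-X_\lambda)+X_\lambda\in\mathcal{C}_\alpha^n$. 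As $\varepsilon>0$ is arbitrary, the Archimedean property of $e_n$ for $\mathcal{C}_\alpha^n$ gives $X\in\mathcal{C}_\alpha^n$, so $\mathcal{C}_\alpha^n$ is closed. Combining everything, $\bigl(V,\{\|\cdot\|_\alpha^n\}_{n=1}^\infty\bigr)$ is a local operator space.

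I expect the only genuinely delicate points to be bookkeeping ones: ensuring that every appeal to the ``order unit'' and ``Archimedean'' axioms is made relative to a single fixed $\alpha$ (there is no global order-unit cone to work with), and, in the closedness argument, correctly converting an approximation inside the $2n\times2n$ cone into the scalar-diagonal relation $\varepsilon e_n+X\in\mathcal{C}_\alpha^n$ before invoking the Archimedean axiom. Everything else is a direct transcription of the operator-system computations in \cite{8,paulsen}.
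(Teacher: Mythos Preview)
Your proof is correct and follows essentially the same approach as the paper. The paper omits the verification that $\|\cdot\|_\alpha^n$ is a separating family of $*$-seminorms (calling it ``simple computations''), while you spell out the standard Choi--Effros manipulations and the separation argument via $\bigcap_\alpha(\mathcal{C}_\alpha^{2n}\cap-\mathcal{C}_\alpha^{2n})=\{0\}$; for the closedness of $\mathcal{C}_\alpha^n$ both you and the paper use the identical compression $(I_n\ I_n)\bigl(\begin{smallmatrix}\varepsilon e_n & X-X_\lambda\\ X-X_\lambda & \varepsilon e_n\end{smallmatrix}\bigr)\bigl(\begin{smallmatrix}I_n\\ I_n\end{smallmatrix}\bigr)$ followed by the Archimedean property.
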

       \begin{proof}   By some simple computations one can show that $\|\cdot \|_{\alpha}^n $ is a family of separating $*$-seminorms.    

       Finally, we show that $\mathcal{C}_\alpha ^1$ is closed in the locally convex topology generated by the separating family of seminorms.
       Let $x$ be any limit point of $\mathcal{C}_\alpha^1$ then 
        V($\alpha$,$n$) $\cap$ $C^1_\alpha$ $\neq$ $\phi$ $\forall$ $n\in \mathbb{N}$ and $\forall$ $\alpha$ where V($\alpha$,$n$)=\{$y\in V$:$\|y-x\|_\alpha^1 \textless \frac{1}{n}$\}.
         Note that 
         $x=x^*$.
       Given any $r$ $\textgreater$ 0 choose $n\in \mathbb{N}$ such that $\frac{1}{n}
       \textless r$. 		          	       
       Let $y\in V(\alpha,n) \cap \mathcal{C}_\alpha^1$ $\implies$ $\Vert x-y \Vert_\alpha$ $\textless r$  $\implies$ $\begin{pmatrix}
       re & x-y\\
       x-y & re
       \end{pmatrix}$ $\in \mathcal{C}_\alpha^2$ 
        This yields $2re+2x-2y = (1,1) \begin{pmatrix}
       re & x-y\\
       x-y & re
       \end{pmatrix}  (1,1)^*\in \mathcal{C}_\alpha^1$ $\implies$ $re+xe$ $\in \mathcal{C}_\alpha^1,$ so that using Archimedean property $x\in \mathcal{C}_\alpha^1$. Hence $\mathcal{C}_\alpha^1$ is closed.
       Similarly, one can prove that $\mathcal{C}_\alpha^n$ is closed $\forall$ $n\in \mathbb{N}$.
       \end{proof}

\subsection{The Archimeadeanization of a local $\ast$-ordered vector space}
           	Let ($V,\{\mathcal{C}_\alpha\}_{\alpha \in \Gamma},e$) be a local ordered vector space, where $V$ is a $\ast$- vector space, \{$\mathcal{C}_\alpha$\} family of cones, $e$ is an ordered unit that is not Archimedean. The process of enlarging cones to make the space Archimedean ordered unit space for ordered $*$-vector space was introduced in \cite{2009vector}. We now show that the same process works for local $*$-ordered vector spaces.
           	\begin{definition}
           			Let $\left(V,\left\{\mathcal{C}_\alpha\; : \; \alpha \in \Gamma\}\right\},e\right)$ be a local $\ast$-ordered vector space with an order unit $e$. Define $\mathcal{D}_\alpha$:=\{$v \in V$ : $re+v \in \mathcal{C}_\alpha$ for all $r>0\}$  for each $\alpha \in \Gamma.$
           	\end{definition}
           	
           Clearly $\mathcal{D}_\alpha$ is a cone with $\mathcal{C}_\alpha \subseteq \mathcal{D}_\alpha.$

\begin{proposition}\label{closure}
                     	Let $\left(V,\left\{\{\mathcal{C}_\alpha\}\; : \;\alpha \in \Gamma\right\}\right)$ be a local $\ast$-ordered vector space with an order unit $e$. Then $\mathcal{D}_\alpha$ is equal to the closure of $\mathcal{C}_\alpha$ in topology induced by family of seminorms introduced in Proposition \ref{seminorm}.
\end{proposition}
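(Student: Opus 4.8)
The plan is to prove the two inclusions $\mathcal{D}_\alpha\subseteq\overline{\mathcal{C}_\alpha}$ and $\overline{\mathcal{C}_\alpha}\subseteq\mathcal{D}_\alpha$ separately, where $\overline{\mathcal{C}_\alpha}$ denotes the closure of $\mathcal{C}_\alpha$ in the topology on $V$ generated by the full separating family $\{\|\cdot\|_\beta^1:\beta\in\Gamma\}$ of Proposition~\ref{seminorm}. Beforehand I would record two elementary facts, both direct consequences of the cone axioms. First, putting $v=0$ in the definition of an order unit gives $r_\alpha e\in\mathcal{C}_\alpha$, and since $\mathcal{C}_\alpha$ is a cone this forces $e\in\mathcal{C}_\alpha$ for every $\alpha$; consequently $\begin{pmatrix}e&e\\ e&e\end{pmatrix}=\begin{pmatrix}1\\1\end{pmatrix}e\begin{pmatrix}1&1\end{pmatrix}\in\mathcal{C}_\alpha^2$, so $\|e\|_\alpha^1\le1<\infty$ for all $\alpha$. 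Second, for hermitian $v\in V$ and $r\ge0$, conjugating $\begin{pmatrix}re&v\\ v&re\end{pmatrix}$ by the scalar unitary $\tfrac1{\sqrt2}\begin{pmatrix}1&1\\ 1&-1\end{pmatrix}$ turns it into $\begin{pmatrix}re+v&0\\ 0&re-v\end{pmatrix}$; since $\mathcal{C}_\alpha^2$ is invariant under conjugation by scalar unitaries and under compression to, and inflation from, the diagonal blocks (all instances of $X^*\mathcal{C}_\alpha^nX\subseteq\mathcal{C}_\alpha^m$), one obtains
\[
\begin{pmatrix}re&v\\ v&re\end{pmatrix}\in\mathcal{C}_\alpha^2\iff re+v\in\mathcal{C}_\alpha\ \text{and}\ re-v\in\mathcal{C}_\alpha .
\]
These are exactly the computations underlying Proposition~\ref{seminorm}.

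For $\mathcal{D}_\alpha\subseteq\overline{\mathcal{C}_\alpha}$, given $v\in\mathcal{D}_\alpha$ I would use the sequence $w_n:=\tfrac1n e+v$, which lies in $\mathcal{C}_\alpha$ for every $n$ by the very definition of $\mathcal{D}_\alpha$, and observe that $\|w_n-v\|_\beta^1=\tfrac1n\|e\|_\beta^1\le\tfrac1n\to0$ for each $\beta\in\Gamma$ by the first fact; hence $w_n\to v$ and $v\in\overline{\mathcal{C}_\alpha}$.

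For $\overline{\mathcal{C}_\alpha}\subseteq\mathcal{D}_\alpha$, note first that since the $\|\cdot\|_\beta^1$ are $*$-seminorms the involution is continuous, and since the family is separating the topology is Hausdorff; hence $V_h$ is closed and every $v\in\overline{\mathcal{C}_\alpha}$ satisfies $v=v^*$. Now fix $r>0$. By definition of the closure there is $w\in\mathcal{C}_\alpha$ with $\|v-w\|_\alpha^1<r$, so there is $r_0<r$ with $\begin{pmatrix}r_0e&v-w\\ v-w&r_0e\end{pmatrix}\in\mathcal{C}_\alpha^2$; the second fact then gives $r_0e+(v-w)\in\mathcal{C}_\alpha$, and adding $(r-r_0)e\in\mathcal{C}_\alpha$ and then $w\in\mathcal{C}_\alpha$ produces $re+v\in\mathcal{C}_\alpha$. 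Since $r>0$ was arbitrary, $v\in\mathcal{D}_\alpha$, and combining the two inclusions gives $\mathcal{D}_\alpha=\overline{\mathcal{C}_\alpha}$.

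The only step carrying any content is the unitary-conjugation reduction in the second preliminary fact — the usual device that makes the order-unit seminorm computable — so I do not anticipate a genuine obstacle; the one point to watch is that the closure must be taken with respect to the \emph{entire} family of seminorms, so that the Hausdorff property forces $v=v^*$ in the second inclusion, even though only the single seminorm $\|\cdot\|_\alpha^1$ actually enters the estimate there.
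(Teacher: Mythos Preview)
Your proof is correct and follows essentially the same line as the paper's: both obtain $\mathcal{D}_\alpha\subseteq\overline{\mathcal{C}_\alpha}$ via the approximants $v+\tfrac1n e\in\mathcal{C}_\alpha$, and the reverse inclusion by picking $w\in\mathcal{C}_\alpha$ with $\|v-w\|_\alpha<r$ and compressing the resulting $2\times2$ matrix to conclude $re+v\in\mathcal{C}_\alpha$. Your write-up is in fact more careful than the paper's --- you explicitly verify $\|e\|_\beta\le1$, spell out the unitary-conjugation reduction, and use Hausdorffness of the full seminorm topology to force $v=v^*$, a point the paper leaves implicit.
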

                 \begin{proof} 
                 	If $v \in \mathcal{D}_\alpha$, then $re+v \in \mathcal{C}_\alpha \forall r>0$
                 	$\implies$ $\|re+v-v\|_\beta$=$r$  \hspace{.2cm}$\forall r>0$\hspace{.2cm} $\forall \beta \in \Gamma$.
                 	So $\mathcal{D}_\alpha\subseteq \bar{\mathcal{C}_\alpha}$. Conversely, for any $v \in \bar{\mathcal{C}_\alpha}$ and any neighbourhood $U$ of $v,$ we have $U \cap \mathcal{C}_\alpha$ $\neq$ $\phi$. Now for $n \in \mathbb{N}$, choose $r_n>0$ such that $r_n$ $\longrightarrow$ 0. Take $U_n$=\{$u\in V$ : $\|u-v\|<r_n$\},  as $U_n$ is a nbd of $v$ so $U_n \cap \mathcal{C}_\alpha$ $\neq$ $\phi$ $\forall n \in \mathbb{N}$.
                 	For each $n \in \mathbb{N}$, assume $u_n \in U_n \cap \mathcal{C}_\alpha$.
                 	Thus $r_n e \pm(u_n-v) \in \mathcal{C}_\alpha$ $\implies$ $r_ne+v-u_n \in \mathcal{C}_\alpha$	$\implies$ $r_ne+v \in \mathcal{C}_\alpha \hspace{.2cm} \forall n$ as $\mathcal{C}_\alpha$ is a cone.
                 	Since $r_n \longrightarrow 0$, it follows that $re+v \in \mathcal{C}_\alpha$ $\forall r>0$.Thus $v \in \mathcal{D}_\alpha$.
                 	Hence we get  $\mathcal{D}_\alpha =\bar{\mathcal{C}_\alpha}$.
                 	\end{proof}

\subsection{Representation theorem for local operator systems}
           	       We now give a short alternative proof of the fact that abstract definition of local operator system given in this paper and concrete definition of local operator system given in \cite{dosiev2008local} are equivalent. 
           	       
           	       \begin{definition}
           	       Let $V$ and $W$ be two abstract local operator system with \{$\mathcal{C}_\alpha$ : $\alpha \in \Gamma\}$ and \{$H_\beta$ : $\beta \in \Omega\}$ family of cones, respectively. A linear map $\Phi$: $V \longrightarrow W$ is called
           	       \begin{itemize}
           	       	\item \emph{unital local positive} if for each $\beta \in \Omega$ there corresponds $\alpha \in \Gamma$ s.t. $\Phi(C_\alpha)\subseteq H_\beta$ and $\Phi(e)=f$ where $e$ and $f$ are the Archimedean local matrix units of $V$ and $W$ resp.
           	       	\item \emph{local order isomorphism} if $\Phi$ is bijective, $\Gamma=\Omega$ and $\Phi(C_\alpha)=H_\alpha$ for all $\alpha.$
           	       \end{itemize} 
           	       \end{definition}
           	        We denote the category of local operator systems with unital local completely positive maps by $\mathcal{LO}$
                 \\ Now we define natural morphism in local operator spaces which is local completely bounded maps as given in \cite[Section 4.3]{dosiev2008local}.
                  
                  \begin{definition}
                  	Let $E=\lim\limits_{\leftarrow}E_\rho$ and $F=\lim\limits_{\leftarrow}F_\sigma$ be two local operator spaces. A linear map $\phi:E \to F$ is said to be locally completely bounded if for every $\sigma \in \Lambda$; there exists a $\rho \in \Gamma$ and K $>$ 0 s.t. $ ||\pi_\sigma^{(n)}\phi^{(n)}(e)||_{M_n(F_\sigma)} \leq K ||\pi_\rho^{(n)}(e)||_{M_n(E_\rho)}$ for every $n \in \mathbb{N}$, $e\in M_n(E_\rho)$ where $\pi_\rho$ and $\pi_\sigma$ are projection maps given by $\pi_\rho:E \to E_\rho$ and $\pi_\sigma: F \to F_\sigma$ resp.
                  	We denote by $||\phi||_{(\rho,\sigma)}^{l.c.b}$ the local complete bound of $\phi$ w.r.t $(\rho,\sigma)$. 
                  \end{definition}

                  	 We have already seen in Remark \ref{concisabs} that every concrete local operator system is abstract. Next result proves that converse also holds true.
           	      	\begin{theorem}[Representation theorem]\label{representation}
           	       	Let $V$ be an abstract local operator system, then there exists a unital complete local order embedding $\Phi$ from $V$ into $C_\mathcal{E} ^*(D)$. Hence abstract local operator systems are equivalent to concrete local operator systems.
           	       	\end{theorem}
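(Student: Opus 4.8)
The plan is to mimic the Choi--Effros / Paulsen--Tomforde representation argument one order-unit space at a time, and then glue the pieces together as a projective limit. First I would fix the abstract local operator system $(V,\{\{\mathcal{C}_\alpha^n\}_{n=1}^\infty : \alpha\in\Gamma\},e)$. For each $\alpha\in\Gamma$ consider the seminorm $\|\cdot\|_\alpha^n$ on $M_n(V)$ supplied by Proposition \ref{seminorm}, and let $N_\alpha=\{v\in V:\|v\|_\alpha^1=0\}$. Using the $*$-compatibility of the seminorms and condition (2) in the definition of a local matrix ordering, $N_\alpha$ is a self-adjoint subspace with $M_n(N_\alpha)=\{X:\|X\|_\alpha^n=0\}$, so the quotient $V_\alpha:=V/N_\alpha$ carries a well-defined matrix order $\widehat{\mathcal{C}}_\alpha^n$ (the image of $\mathcal{C}_\alpha^n$), a norm $\|\cdot\|_\alpha^n$, and the image $e_\alpha$ of $e$. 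The first key step is to verify that $(V_\alpha,\{\widehat{\mathcal{C}}_\alpha^n\},e_\alpha)$, after Archimedeanization (Proposition \ref{closure} shows the Archimedeanized cone is exactly the closure of $\widehat{\mathcal{C}}_\alpha^n$), is a genuine operator system in the sense of \cite{2009vector,kavruk2011tensor}. This is where one invokes the ordinary (non-local) abstract-to-concrete theorem: there is a unital complete order embedding $\Phi_\alpha:V_\alpha\hookrightarrow B(H_\alpha)$ for some Hilbert space $H_\alpha$.

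Next I would organize the family $\{V_\alpha\}$ into a projective system. Since the cones are downward filtered ($\mathcal{C}_\beta\subseteq\mathcal{C}_\alpha$ when $\beta\le\alpha$, say), one gets canonical unital local-positive connecting maps $\pi_{\alpha\beta}:V_\beta\to V_\alpha$ for $\beta\ge\alpha$, compatible with the $\Phi_\alpha$'s; the intersection condition $\bigcap_\alpha(\mathcal{C}_\alpha\cap-\mathcal{C}_\alpha)=\{0\}$ (at each matrix level) guarantees that the natural map $V\to\varprojlim V_\alpha$ is injective, and in fact a complete local order embedding onto a dense subspace. The second key step is to assemble the Hilbert spaces: set $H=\bigoplus$-type object built from the $H_\alpha$, or more precisely take the quantized domain $\mathcal{E}=\{H_\alpha\}$ sitting inside a common Hilbert space $H$ with dense union $D=\bigcup_\alpha H_\alpha$, arranging the inclusions $H_\alpha\subseteq H_\beta$ to match $\beta\ge\alpha$ (one may need to pass to a cofinal reindexing or to direct sums to realize the filtration honestly; this is a standard maneuver in the Pro-$C^*$ literature, cf. \cite{dosiev2008local,inoue1972locally}). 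Then each $v\in V$ is sent to the operator $\Phi(v)\in L(D)$ acting on $H_\alpha$ by $\Phi_\alpha(\pi_\alpha(v))$; compatibility of the $\Phi_\alpha$ makes this well-defined, and the defining relations $P_\alpha\Phi(v)\subseteq\Phi(v)P_\alpha$ hold because each $\Phi_\alpha$ respects the connecting maps, so $\Phi(v)\in C_\mathcal{E}^*(D)$.

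Finally I would check that $\Phi:V\to C_\mathcal{E}^*(D)$ is a unital complete local order embedding: unitality is immediate from $\Phi_\alpha(e_\alpha)=I_{H_\alpha}$; local complete positivity of $\Phi$ and of $\Phi^{-1}$ on the range follows order-by-order from the fact that each $\Phi_\alpha$ is a complete order isomorphism onto its image, translated through the definition of local matrix positivity in Section \ref{s:1}. The Archimedean condition on $e$ at every matrix level is exactly what is needed to ensure that $\Phi^{(n)}(X)|_{H_\alpha^n}\ge0$ forces $X\in\overline{\mathcal{C}_\alpha^n}=\mathcal{D}_\alpha^n$, which is the correct notion of local positivity. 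I expect the main obstacle to be the honest realization of the projective system inside a single quantized domain — i.e. choosing the $H_\alpha$ and the embeddings so that the index order $\beta\ge\alpha$ really corresponds to $H_\alpha\subseteq H_\beta$ and the $P_\alpha$ are genuine projections with $\bigcup H_\alpha$ dense — rather than the order-theoretic bookkeeping, which is routine given Propositions \ref{seminorm} and \ref{closure} and the classical operator system representation theorem.
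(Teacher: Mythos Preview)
Your approach is sound and ultimately equivalent, but it takes a different route from the paper's, and the paper's route is precisely engineered to dissolve the obstacle you flag at the end. The paper does \emph{not} first form the quotient operator systems $V_\alpha=V/N_\alpha$, invoke the classical Choi--Effros theorem abstractly, and then try to assemble the resulting (a priori unrelated) Hilbert spaces into a single quantized domain. Instead it works directly on $V$: for each $\alpha$ it sets $S_\alpha=\bigcup_r \mathrm{U.L.C.P.}_{\mathcal{C}_\alpha}(V,M_r)$, the set of all unital maps $w:V\to M_r$ that are completely positive with respect to the cone family $\{\mathcal{C}_\alpha^n\}$, and then puts $H_\alpha=\bigoplus_{w\in S_\alpha}\mathbb{C}^{n(w)}$ and $\Phi_\alpha(v)=(w(v))_{w\in S_\alpha}$. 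Because the cones are downward filtered, $\alpha\le\beta$ forces $S_\alpha\subseteq S_\beta$, so the inclusions $H_\alpha\subseteq H_\beta$ and the compatibility $\Phi_\beta(v)|_{H_\alpha}=\Phi_\alpha(v)$ are automatic; the global map $\Phi:V\to C_{\mathcal{E}}^*(\mathcal{D})$ is then defined by $\Phi(v)|_{H_\alpha}=\Phi_\alpha(v)$, and injectivity/order-embedding comes from a Hahn--Banach separation on each cone. In effect the paper \emph{replays} the Choi--Effros proof rather than citing its conclusion, and this is exactly what makes the quantized-domain filtration come for free.

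Your strategy buys a cleaner conceptual story (local operator system $=$ projective limit of operator systems, then apply the known theorem termwise) and dovetails with the paper's Section~\ref{projective}; but, as you anticipate, abstractly invoking Choi--Effros for each $V_\alpha$ gives no reason for the representing Hilbert spaces to nest. The fix is simply to remember that the standard Choi--Effros representation is already the direct sum over matrix states, so the ``abstract'' and ``concrete'' versions coincide once you unwind the proof; at that point your argument collapses onto the paper's. There is no genuine gap, only a detour.
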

           	       	\begin{proof} Let $V$ be a local operator system with family of cones $\{\mathcal{C}_\alpha ^n\}_{\alpha \in \Gamma}$ and Archimedean order unit $e$.
           	       	Put $S_{\alpha}^{(r)}$=U.L.C.P.$_{\mathcal{C}_\alpha}(V,M_r)$, the set of all unital local completely positive maps $w$ : $V\to M_r$ with respect to cone $\mathcal{C}_\alpha$. 
           	       	 Also, put $S_{\alpha}$=$\cup _{r\in \mathbb{N}}$  $S_{\alpha} ^{(r)}$ where $\alpha\in \Gamma.$ It follows that \{$S_{\alpha}$\} is a directed family of sets. Let S=$\displaystyle\cup_{\alpha\in \Gamma}$  $S_{\alpha}$. For any $\alpha \in \Gamma$, consider the Hilbert space $H_{\alpha}$=$\oplus_{w\in S_{\alpha}}\mathbb{C}^{n(w)}$ where $n(w)=n$ whenever $w\in S_{\alpha} ^{(n)}$
           	       		Let us introduce $\Phi_{\alpha} :V \longrightarrow$ B($H_{\alpha}$) defined by  $\Phi_{(\alpha}$($v$)=($w(v))_{w\in S_{\alpha}}.$
                  	We can easily show that $\Phi_{\alpha}$ is unital matrix positive map. 
                  	Define $\Phi : V\longrightarrow C_{\mathcal{E}}(\mathcal{D})$ by 
                  	$\Phi(v)$=$\Phi_{\alpha}(v)$ on $H_{\alpha}$. Clearly, $\Phi$ is unital matrix local order positive map because for each $\alpha \in \Gamma,$ $\Phi(v)\geq _{\alpha} 0$ whenever $v\geq _{\alpha}$ 0. 
                     Now we see that for every $\alpha\in \Gamma$ there exists $\beta\in \Gamma$ s.t. $\Phi(v)\gneq_{\beta}$ 0 whenever $v\gneq _{\alpha}$ 0 using Hahn-Banach Separation Theorem.
                 
                  	  By \cite[Proposition 3.1]{dosiev2008local}, dual of $\Phi(v)$ is $\Phi(v^*)$ thus we have that $\Phi(v)\in  C_{\mathcal{E}}^*(\mathcal{D}),$ i.e. $\Phi(V) \subseteq C_{\mathcal{E}}^*(\mathcal{D})$.
                  Finally it is trivial that $\Phi$ is actually injective.
                   
                  	  \end{proof}
                  	  
                  	 \subsection{Projective limit of operator systems}\label{projective}
                  	  We now consider the projective system of operator systems, and show that its projective limit possesses the structure of an abstract local operator system.

                  	   Let \{($V_\alpha,f_{\alpha \beta}$) $:$ $\alpha \leq \beta$ in $\Gamma$\} be a projective system of operator systems, that is, $\{V_\alpha\; :\; \alpha \in \Gamma\}$ is a family of operator system; $\Gamma$ being a directed set and $\{f_{\alpha\beta}\; : \; V_\beta \to V_\alpha$ $|$ $\alpha \leq \beta$ in $\Gamma$\} a family of unital completely positive maps satisfying $f_{\alpha \beta}$o$f_{\beta \gamma}$=$f_{\alpha \gamma}$ $\forall$ $\alpha \leq \beta \leq \gamma$ in $\Gamma$ and $f_{\alpha \alpha}$=$Id_{V_\alpha}$ $\forall$ $\alpha \in \Gamma$. The set
                  $V=\{(v_\alpha)_{\alpha \in \Gamma} \in \Pi_{\alpha \in \Gamma} V_\alpha \; : \; f_{\alpha\beta}(v_\beta)=v_\alpha \forall \alpha \leq \beta \text{ in } \Gamma\},$ is then the projective limit of projective system of the operator system. $V$ is, in fact, a $\ast$-vector space with involution defined as $(v)^*$=$(v^{*_\alpha})$ and family of cones \{$\mathcal{C}_\alpha$\} where $\mathcal{C}_\alpha$ is defined as $\mathcal{C}_\alpha=\{v=(v_\alpha) \in V:v_\alpha \in V_\alpha^+\}.$ Clearly, $\mathcal{C}_\alpha$ is a cone. Similarly, we can define for higher levels.  Also, here we have that  $v$ $\in \mathcal{C}_\alpha ^n $ and $v$ $\in -\mathcal{C}_\alpha ^n$ $\forall$ $\alpha$ then $v$=0  for each $n\in \mathbb{N}$. Also, $e$=($e_\alpha$), where $e_\alpha$ is the Archimedean order unit of $V_\alpha$. With the above observations, we have that V is a local operator system. We now prove that the converse is also true.
                  
                  \begin{proposition}
                  Every local operator system (abstract) can be obtained by projective limit of operator systems (abstract).
                  \end{proposition}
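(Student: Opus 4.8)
The plan is to quotient $V$ by the order kernel of each cone, recognise every quotient as an abstract operator system, organise these into a projective system indexed by $\Gamma$, and then show that $V$ is the associated projective limit. For $\alpha\in\Gamma$ put $\mathcal{N}_\alpha:=\mathcal{C}_\alpha^1\cap(-\mathcal{C}_\alpha^1)$. Since $\mathcal{C}_\alpha^1\subseteq V_h$ this is a self-adjoint linear subspace of $V$, and the compatibility condition $X^{*}\mathcal{C}_\alpha^nX\subseteq\mathcal{C}_\alpha^m$ of the local matrix ordering gives, by the standard kernel computation, $\mathcal{C}_\alpha^n\cap(-\mathcal{C}_\alpha^n)=M_n(\mathcal{N}_\alpha)$ for all $n$. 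Hence the quotient $V_\alpha:=V/\mathcal{N}_\alpha$, with quotient map $q_\alpha\colon V\to V_\alpha$, carries a genuine matrix ordering: set $\widehat{\mathcal{C}}_\alpha^n$ to be the Archimedeanisation of $q_\alpha^{(n)}(\mathcal{C}_\alpha^n)\subseteq M_n(V_\alpha)$ and $\widehat e_\alpha:=q_\alpha(e)$. Using the matrix-level version of Proposition \ref{closure} together with the fact that $e$ is an Archimedean matrix order unit — so that each $\mathcal{C}_\alpha^n$ is already closed in the seminorm topology of Proposition \ref{seminorm} — one checks that $\widehat e_\alpha$ is an Archimedean matrix order unit for $(V_\alpha,\{\widehat{\mathcal{C}}_\alpha^n\})$, so that $(V_\alpha,\{\widehat{\mathcal{C}}_\alpha^n\},\widehat e_\alpha)$ is an abstract operator system.

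For the connecting maps, note that since $\{\mathcal{C}_\alpha\}$ is downward filtered the relation $\alpha\le\beta\ :\Leftrightarrow\ \mathcal{C}_\beta\subseteq\mathcal{C}_\alpha$ turns $\Gamma$ into a directed set, and $\mathcal{C}_\beta^n\subseteq\mathcal{C}_\alpha^n$ forces $\mathcal{N}_\beta\subseteq\mathcal{N}_\alpha$. Hence $f_{\alpha\beta}\colon V_\beta\to V_\alpha$, $v+\mathcal{N}_\beta\mapsto v+\mathcal{N}_\alpha$, is a well-defined linear surjection; it is unital, it is completely positive because $f_{\alpha\beta}^{(n)}(\widehat{\mathcal{C}}_\beta^n)$ is contained in the Archimedeanisation of $q_\alpha^{(n)}(\mathcal{C}_\beta^n)\subseteq q_\alpha^{(n)}(\mathcal{C}_\alpha^n)$, hence in $\widehat{\mathcal{C}}_\alpha^n$, and $f_{\alpha\alpha}=\mathrm{Id}$, $f_{\alpha\beta}\circ f_{\beta\gamma}=f_{\alpha\gamma}$ are clear. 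Thus $\{(V_\alpha,f_{\alpha\beta}):\alpha\le\beta\text{ in }\Gamma\}$ is a projective system of abstract operator systems and, by the discussion preceding the statement, its projective limit $W:=\varprojlim V_\alpha$ is an abstract local operator system. Define $\iota\colon V\to W$ by $\iota(v):=(q_\alpha(v))_{\alpha\in\Gamma}$; since $f_{\alpha\beta}(q_\beta(v))=q_\alpha(v)$ we have $\iota(v)\in W$, and $\iota$ is unital with $\iota^{(n)}(\mathcal{C}_\alpha^n)$ contained in the $\alpha$-th matrix cone of $W$ for all $\alpha,n$.

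It remains to show that $\iota$ is a complete local order isomorphism onto $W$. Injectivity is immediate, as $\iota(v)=0$ forces $v\in\bigcap_\alpha\mathcal{N}_\alpha=\bigcap_\alpha(\mathcal{C}_\alpha^1\cap-\mathcal{C}_\alpha^1)=\{0\}$ by the separation axiom. For order reflection, if $\iota^{(n)}(v)$ lies in the $\alpha$-th matrix cone of $W$ then $q_\alpha^{(n)}(v)\in\widehat{\mathcal{C}}_\alpha^n$, so for every $r>0$ one gets $re_n+v\in\mathcal{C}_\alpha^n+M_n(\mathcal{N}_\alpha)=\mathcal{C}_\alpha^n$; hence $v\in\mathcal{D}_\alpha^n=\mathcal{C}_\alpha^n$ by Proposition \ref{closure} and the Archimedean hypothesis, i.e. $v\ge_\alpha 0$; combined with surjectivity this yields $\iota^{(n)}(\mathcal{C}_\alpha^n)$ equal to the $\alpha$-th matrix cone of $W$. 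The substantive step — the one I expect to be the real obstacle — is surjectivity of $\iota$. Given $(v_\alpha)_\alpha\in W$, pick representatives $x_\alpha\in V$ with $q_\alpha(x_\alpha)=v_\alpha$; applying $f_{\gamma\alpha}$ gives $q_\gamma(x_\alpha)=v_\gamma=q_\gamma(x_\gamma)$ whenever $\gamma\le\alpha$, so $x_\alpha-x_\beta\in\mathcal{N}_\gamma$ for all $\alpha,\beta\ge\gamma$; that is, $(x_\alpha)_{\alpha\in\Gamma}$ is a Cauchy net with respect to every seminorm $\|\cdot\|_\gamma^1$. Its limit $v\in V$ then satisfies $q_\gamma(v)=v_\gamma$ for every $\gamma$, so $\iota(v)=(v_\alpha)_\alpha$; since these arguments work verbatim at every matrix level, $\iota$ is a complete local order isomorphism and $V\cong\varprojlim V_\alpha$. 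The one delicate point here is that an abstract local operator system must be complete in the topology of Proposition \ref{seminorm} for the Cauchy net $(x_\alpha)$ to converge inside $V$; without that hypothesis one obtains only a complete local order embedding of $V$ as a dense local operator subsystem of $\varprojlim V_\alpha$.
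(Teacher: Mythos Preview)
Your approach is exactly the paper's: quotient $V$ by the null space of each cone and assemble the quotients into a projective system via the obvious connecting maps. The paper's proof in fact stops at the sentence ``Therefore the above defined family forms a projective system of operator systems'' and never verifies the isomorphism $V\cong\varprojlim V_\alpha$; you go considerably further and try to check injectivity, order reflection, and surjectivity of the canonical map $\iota\colon V\to\varprojlim V_\alpha$.

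Two remarks. First, a small technical slip: $\mathcal{C}_\alpha^1\cap(-\mathcal{C}_\alpha^1)$ sits inside $V_h$ and is only a \emph{real} subspace, so the quotient $V/\mathcal{N}_\alpha$ is not a complex $*$-vector space as written. The paper handles this by taking $M_\alpha:=\langle\mathcal{C}_\alpha\cap-\mathcal{C}_\alpha\rangle$, the complex linear span; you should do the same (equivalently, $\mathcal{N}_\alpha+i\mathcal{N}_\alpha$). This also makes the identity $\mathcal{C}_\alpha^n\cap(-\mathcal{C}_\alpha^n)=M_n(\mathcal{N}_\alpha)$ come out correctly at the matrix levels.

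Second, and more importantly, the gap you flag at the end is real and is not addressed in the paper either. Surjectivity of $\iota$ amounts to saying that every compatible family $(v_\alpha)_\alpha$ lifts to a single $v\in V$, and your Cauchy-net argument genuinely needs $V$ to be complete in the locally convex topology of Proposition~\ref{seminorm}. Definition~\ref{localopsys} imposes no completeness hypothesis, so as stated the proposition only yields a unital complete local order embedding of $V$ into $\varprojlim V_\alpha$ with dense range; the full identification $V\cong\varprojlim V_\alpha$ requires either assuming completeness or passing to the completion. Your diagnosis here is correct, and it is a point the paper glosses over.
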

                  \begin{proof} 
                  We have that V is a local operator system with family of cones \{$\mathcal{C}_\alpha$\}. Let $M_\alpha$=$<\mathcal{C}_\alpha \cap -\mathcal{C}_\alpha>,$ the subspace generated by $\mathcal{C}_\alpha \cap -\mathcal{C}_\alpha$.
                  Let $V_\alpha$:= $V/M_\alpha$, which is a operator system with cone as  $M_n(V_\alpha)^+$=\{[$v_{ij}$+$M_\alpha] \in M_n(V_\alpha)$ : $[v_{ij}] \in \mathcal{C}_\alpha^n$\}.
					 Connecting maps are defined as 
					for all $\alpha \leq \beta \in \Gamma$, define $f_{\alpha\beta}$ : $V_\beta \to V_\alpha$ given by $f_{\alpha\beta}(v+M_\beta)$=$v+M_\alpha,$
					Therefore the above defined family forms a projective system of operator systems.
			\end{proof}

               \begin{remark}
              	We have defined Cuntz local operator system in Examples \ref{3.6}. Now we see that they are the projective limit of Cuntz operator system. Indeed,  $\mathcal{SO}^l_n$=$\lim\limits_{\leftarrow}\mathcal{SO}^\alpha_n$ where $\mathcal{SO}^l_n$ is Cuntz local operator system corresponding to domain $\mathcal{D}=\cup H_\alpha$and $\mathcal{SO}^\alpha_n$ is Cuntz operator system corresponding to $H_\alpha$ Hilbert space with n isometries $S_1|_{H_\alpha},S_2|_{H_\alpha},..,S_n|_{H_\alpha}$.
              \end{remark}
              \begin{remark}
              	We also have an obvious result that Projective limit of Cuntz operator system is Cuntz local operator system. 
              \end{remark}

\section{Local operator system structures on Archimedean local ordered unit spaces}\label{s:3}
              
             \subsection{LOMIN structure on Archimedean local ordered unit space}
              In \cite{asadi}, the minimal local operator system structure of A.L.O.U. spaces has been introduced. We here give a brief overview of this structure and prove some important characterizations. 
              Given an Archimedean local ordered unit space $(V,\{V_\alpha^+$ : $\alpha \in \Gamma\},e)$, define $N_\alpha$=$<V_\alpha^+ \cap -V_\alpha^+>$ and let $M_\alpha$=$V/N_\alpha$. Then $M_\alpha$ is a $\ast$-ordered vector space with cone $M_\alpha^+$=\{$v+N_\alpha \in M_\alpha$ :$v\in V_\alpha^+$\}.
              If $S_\alpha$ denotes the set of all unital positive linear functionals on $M_\alpha$ s.t. $s(M_\alpha^+) \geq 0 \forall s \in S_\alpha$, then by \cite[Theorem 5.2]{2009vector} there exists a compact Hausdorff topology on $S_\alpha(M_\alpha)$ such that $C( S_\alpha(M_\alpha))$ is an operator system. Then $C(\cup S_\alpha(M_\alpha))$ is a local operator system.
              $\ast$-vector space with involution given by $f^*(x)$=$\overline{ f(x)}$ that becomes a local operator system with respect to family of cones given by $\mathcal{D}_\alpha^n=\{(f_{ij}) \in M_n(C(\cup S_\alpha(M_\alpha))) \; : \; (f_{ij})|_{S_\alpha(M_\alpha)} \geq 0\}$ and Archimedean matrix order unit $I$, where $I(x)=1$ $\forall x \in \cup S_\alpha(M_\alpha)$.

              Define $\Phi : V \to C(\cup S_\alpha(M_\alpha))$ by $\Phi(v)|_{S_\alpha(M_\alpha)}(s)=s(v+N_\alpha)$. Clearly, $\Phi$ is a unital local order isomorphism onto its range.
              Also we have that \begin{eqnarray*}
              \Phi^{-1}(\mathcal{D}_\alpha)&= & V_\alpha ^+.\end{eqnarray*}
              Using this map $\Phi$, define local matrix ordering $\{\mathcal{C}_\alpha ^n\}$ on ($V,\{V_\alpha^+\},e$) as:  $(v_{ij}) \in \mathcal{C}_\alpha ^n$ if and only if $\Phi(v_{ij})\geq 0$ in $M_n(C(\cup S_\alpha (M_\alpha)))$. This discussion leads us to following result:

              \begin{proposition}
              	Let $(V,\{\mathcal{C}_\alpha$ : $\alpha \in \Gamma\},e)$ be an Archimedean local ordered unit space. Then $\left(V,\left\{\{(\mathcal{C}_\alpha^n)^{\min}(V)\}_{n=1}^\infty \; : \; \alpha \in \Gamma\right\},e\right)$ is a local operator system induced by the inclusion of $V$ into $C(\cup S_\alpha(V_\alpha)).$
              \end{proposition}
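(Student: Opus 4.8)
The plan is to verify directly that the family $\left\{\{(\mathcal{C}_\alpha^n)^{\min}(V)\}_{n=1}^\infty : \alpha \in \Gamma\right\}$ obtained from the inclusion $\Phi : V \to C(\cup S_\alpha(M_\alpha))$ satisfies the three requirements in Definition \ref{localopsys}: that each $(M_n(V),\{(\mathcal{C}_\alpha^n)^{\min}\})$ is a local $*$-ordered vector space, that the family is compatible under compression by rectangular scalar matrices, and that $e$ remains an Archimedean matrix order unit. Since $C(\cup S_\alpha(M_\alpha))$ is already known (from the discussion preceding the statement, citing \cite[Theorem 5.2]{2009vector}) to be a local operator system with cones $\mathcal{D}_\alpha^n$, essentially everything will be pulled back through $\Phi$, which is a linear, unital, injective $*$-map.

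First I would record that $(\mathcal{C}_\alpha^n)^{\min}(V) := \Phi^{(n)-1}(M_n(C(\cup S_\alpha(M_\alpha)))^+ \cap \text{(the }\mathcal{D}_\alpha^n\text{ cone)})$ is a cone in $M_n(V)_h$, being the preimage of a cone under a linear $*$-map, and that $\Phi^{(n)}$ being injective gives $\bigcap_\alpha\big((\mathcal{C}_\alpha^n)^{\min} \cap -(\mathcal{C}_\alpha^n)^{\min}\big) = \Phi^{(n)-1}\big(\bigcap_\alpha(\mathcal{D}_\alpha^n \cap -\mathcal{D}_\alpha^n)\big) = \Phi^{(n)-1}(\{0\}) = \{0\}$, using that $C(\cup S_\alpha(M_\alpha))$ is itself a local operator system. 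This gives requirement (1). For requirement (2), if $X \in M_{n,m}$ and $Y \in (\mathcal{C}_\alpha^n)^{\min}$, then $\Phi^{(m)}(X^*YX) = X^*\Phi^{(n)}(Y)X$ by naturality of the matrix amplifications of a linear map, and $X^*\mathcal{D}_\alpha^n X \subseteq \mathcal{D}_\alpha^m$ because $C(\cup S_\alpha(M_\alpha))$ is a local matrix $*$-ordered vector space; hence $X^*YX \in (\mathcal{C}_\alpha^m)^{\min}$.

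For requirement (3), the Archimedean matrix order unit property, I would use $\Phi(e) = I$ together with the fact that $I$ is an Archimedean matrix order unit for $C(\cup S_\alpha(M_\alpha))$. That $e_n$ is an order unit for each $n$ follows since for $A \in M_n(V)_h$ and each $\alpha$ there is $r_\alpha > 0$ with $r_\alpha I_n \pm \Phi^{(n)}(A) \in \mathcal{D}_\alpha^n$, i.e.\ $r_\alpha e_n \pm A \in (\mathcal{C}_\alpha^n)^{\min}$. For the Archimedean property: if $r e_n + A \in (\mathcal{C}_\alpha^n)^{\min}$ for all $r > 0$, applying $\Phi^{(n)}$ gives $r I_n + \Phi^{(n)}(A) \in \mathcal{D}_\alpha^n$ for all $r>0$, whence $\Phi^{(n)}(A) \in \mathcal{D}_\alpha^n$ by the Archimedean property upstairs, so $A \in (\mathcal{C}_\alpha^n)^{\min}$. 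Finally I would note that on the first matrix level $\Phi^{-1}(\mathcal{D}_\alpha) = V_\alpha^+$ (already observed in the text), so the induced order unit structure genuinely refines the given Archimedean local ordered unit space structure, which is what makes this a \emph{local operator system structure on} $(V,\{\mathcal{C}_\alpha\},e)$.

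I do not expect a serious obstacle here; the only point requiring a little care is checking that the compact Hausdorff topologies on the $S_\alpha(M_\alpha)$ assemble correctly so that $C(\cup S_\alpha(M_\alpha))$ is a genuine local operator system with an upward-filtered (equivalently, the cones $\mathcal{D}_\alpha^n$ downward-filtered) structure indexed by $\Gamma$, and that $\Phi$ is well-defined and continuous into it — but this is exactly the content already assembled in the paragraph preceding the proposition, so it may simply be invoked. The remaining verifications are the routine "pull everything back through an injective unital $*$-linear map" computations sketched above.
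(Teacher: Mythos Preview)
Your proposal is correct and follows essentially the same approach as the paper: the paper states this proposition immediately after the paragraph constructing $\Phi$ and defining the cones by pullback, with the phrase ``This discussion leads us to following result'' and no separate proof, so your write-up simply makes explicit the routine verifications the paper leaves implicit. The only minor remark is that you should phrase the intersection step as $\bigcap_\alpha\big((\mathcal{C}_\alpha^n)^{\min} \cap -(\mathcal{C}_\alpha^n)^{\min}\big) \subseteq \Phi^{(n)-1}\big(\bigcap_\alpha(\mathcal{D}_\alpha^n \cap -\mathcal{D}_\alpha^n)\big)$ rather than equality, but this inclusion is all that is needed.
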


              \begin{definition}
              	Let $(V,\{V_\alpha^+$ : $\alpha \in \Gamma\},e)$ be an Archimedean local order unit space.	For each $n \in \mathbb{N}$ and each $\alpha \in \Gamma$, define
              	$$(\mathcal{C}_\alpha^n)^{\min}(V) : =\{(v_{ij}) \in M_n(V)\; : \; \sum\limits_{i,j=1}^{n}\bar{\lambda_i}\lambda_j v_{ij} \in \mathcal{C}_\alpha \indent \forall\indent \lambda_1,....\lambda_n \in \mathbb{C}.\}$$ We denote $\{(\mathcal{C}_\alpha^n)^{\min}(V)\}_{n=1}^\infty$ for each $\alpha \in \Gamma$ by $\mathcal{C}_\alpha^{\min}(V)$.       	  
              \end{definition}
              
               In the following result, we prove an alternative way to define $\mathcal{C}_\alpha^{\min}(V)$ which implies that with these cone structures $(V,\{\mathcal{C}_\alpha$ : $\alpha \in \Gamma\},e)$ is in fact a minimal local operator system structure possible on $V$.
               
              \begin{theorem}
              	Let $(V,\{\mathcal{C}_\alpha$ : $\alpha \in \Gamma\},e)$ be an Archimedean local ordered unit space. Then ($v_{ij}$) $\in (\mathcal{C}_\alpha^n)^{\min}(V)$ if and only if $(s(v_{ij}+N_\alpha))$ $\in M_n^+$ for each $s\in S_\alpha(M_\alpha)$ for all $\alpha \in \Gamma$ and $n \in \mathbb{N}$. 
              \end{theorem}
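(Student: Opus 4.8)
The plan is to unwind both sides of the claimed equivalence to the single scalar condition $\sum_{i,j}\overline{\lambda_i}\lambda_j\,v_{ij}\in\mathcal{C}_\alpha$, using the fact that positivity of a matrix over $\mathbb{C}$ is detected by testing against vectors. First I would recall the definition: $(v_{ij})\in(\mathcal{C}_\alpha^n)^{\min}(V)$ means precisely that $\sum_{i,j=1}^n\overline{\lambda_i}\lambda_j v_{ij}\in\mathcal{C}_\alpha$ for all $\lambda=(\lambda_1,\dots,\lambda_n)\in\mathbb{C}^n$. On the other side, for a fixed $s\in S_\alpha(M_\alpha)$ the scalar matrix $(s(v_{ij}+N_\alpha))_{i,j}\in M_n$ is positive semidefinite if and only if $\sum_{i,j}\overline{\lambda_i}\lambda_j\, s(v_{ij}+N_\alpha)\geq 0$ for every $\lambda\in\mathbb{C}^n$, which by linearity of $s$ equals $s\bigl(\sum_{i,j}\overline{\lambda_i}\lambda_j v_{ij}+N_\alpha\bigr)$.

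Next, the forward direction is immediate: if $w_\lambda:=\sum_{i,j}\overline{\lambda_i}\lambda_j v_{ij}\in\mathcal{C}_\alpha$, then $w_\lambda+N_\alpha\in M_\alpha^+$ by the definition of the cone on the quotient $M_\alpha=V/N_\alpha$, hence $s(w_\lambda+N_\alpha)\geq 0$ for every $s\in S_\alpha(M_\alpha)$ (these are exactly the unital positive functionals on $M_\alpha$), so $(s(v_{ij}+N_\alpha))\in M_n^+$ for all $s$. For the converse, suppose $(s(v_{ij}+N_\alpha))\in M_n^+$ for every $s\in S_\alpha(M_\alpha)$. Fixing an arbitrary $\lambda\in\mathbb{C}^n$ and applying the vector-testing characterization, we get $s(w_\lambda+N_\alpha)\geq 0$ for all $s\in S_\alpha(M_\alpha)$. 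The point is then to conclude $w_\lambda+N_\alpha\in M_\alpha^+$, i.e.\ $w_\lambda\in\mathcal{C}_\alpha$ (using that $\mathcal{C}_\alpha$ is the preimage of $M_\alpha^+$, which holds since $N_\alpha\subseteq\mathcal{C}_\alpha\cap(-\mathcal{C}_\alpha)$ makes the quotient cone saturated). This is where I would invoke \cite[Theorem 5.2]{2009vector} (or the order-unit separation/duality theorem behind it): on an Archimedean order unit space, $M_\alpha^+$ is exactly the set of elements on which every state in $S_\alpha(M_\alpha)$ is nonnegative. Note $M_\alpha$ is genuinely an Archimedean order unit space because passing to the quotient by $N_\alpha=\langle V_\alpha^+\cap -V_\alpha^+\rangle$ is precisely the Archimedeanization/quotient construction discussed just above in the text, so the hypothesis of that theorem is met.

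The main obstacle I expect is verifying that the hypotheses of \cite[Theorem 5.2]{2009vector} apply cleanly at each fixed level $\alpha$ — in particular that $e+N_\alpha$ is an Archimedean order unit of $M_\alpha$ and that $S_\alpha(M_\alpha)$ separates points of $M_\alpha$ — since the local setting only guarantees the Archimedean property of $e$ after quotienting and only gives $\bigcap_\alpha(\mathcal{C}_\alpha\cap -\mathcal{C}_\alpha)=\{0\}$ globally rather than at each $\alpha$. I would handle this by working entirely inside the single operator system $M_\alpha$ (which the preceding paragraphs have already established is an Archimedean order unit space with cone $M_\alpha^+$), so that the cited theorem gives $M_\alpha^+=\{w\in M_\alpha : s(w)\geq 0\ \forall s\in S_\alpha(M_\alpha)\}$ directly. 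Everything else — swapping between the three equivalent phrasings "$w_\lambda\in\mathcal{C}_\alpha$", "$w_\lambda+N_\alpha\in M_\alpha^+$", "$s(w_\lambda+N_\alpha)\geq 0\ \forall s$", and "$(s(v_{ij}+N_\alpha))_{i,j}\in M_n^+$" — is a routine chain of biconditionals, and the "for all $\alpha\in\Gamma$ and $n\in\mathbb{N}$" quantifiers simply ride along since the argument is performed for each fixed $\alpha$ and $n$.
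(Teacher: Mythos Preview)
Your proposal is correct and is precisely the argument implicit in the paper's setup: the paper itself provides no explicit proof of this theorem, but the surrounding text establishes the map $\Phi:V\to C(\cup S_\alpha(M_\alpha))$ with $\Phi(v)|_{S_\alpha(M_\alpha)}(s)=s(v+N_\alpha)$ and records that $\Phi^{-1}(\mathcal{D}_\alpha)=V_\alpha^+$, which is exactly the state-separation fact (from \cite[Theorem 5.2]{2009vector}) you invoke for the converse. Your reduction of both sides to the scalar condition $\sum_{i,j}\overline{\lambda_i}\lambda_j v_{ij}\in\mathcal{C}_\alpha$ via testing against vectors in $\mathbb{C}^n$ is the natural route, and your care in noting that the argument takes place entirely in the genuine AOU space $M_\alpha=V/N_\alpha$ (so that the cited theorem applies at each fixed level) correctly handles the only subtle point.
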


              \begin{definition}
              	Let $(V,\{\mathcal{C}_\alpha$ : $\alpha \in \Gamma\},e)$ be an Archimedean local ordered unit space. We define $LOMIN(V)$ to be the local operator system $\left(V,\left\{\{(\mathcal{C}_\alpha^n)^{\min}(V)\}_{n=1}^\infty\; : \; \alpha \in \Gamma\right\},e\right)$.
              	              \end{definition}
              
              Thus upto complete order isomorphism, LOMIN(V) can be identified with a subspace of $C(\cup S_\alpha(V_\alpha))$. Next we prove a universal property of this local operator system structure whose proof is now trivial.
              \begin{theorem}\label{univlomin}
              	Let $(V,\{V_\alpha^+\}$ : $\alpha \in \Gamma,e)$ be an Archimedean local ordered unit space. If $\left(W,\left\{\{\mathcal{D}_\beta^n\}_{n=1}^\infty\; : \; \beta \in \Omega \right\}\right)$ is a local matrix $\ast$-order vector space and $\phi$ : W $\to$ LOMIN(V) is a local positive linear map, then $\phi$ is completely local positive.
              	
              	Moreover, if $V'$=($V,\{\mathcal{C}_\alpha^n\};\alpha \in \Gamma,e$) is a local operator system with $C^1_\alpha$=$V_\alpha^+$ and such that, for every local operator system W, any positive map $\psi$ : W $\to$ $V'$ is completely local positive map, then the identity map is a unital complete local order isomorphism between $V'$ and LOMIN(V). 
              \end{theorem}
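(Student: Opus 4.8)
The plan is to establish the two assertions separately, in each case removing the matrix levels $n\ge 2$ by a single congruence step and reducing to the ground level $n=1$, where by construction the cones coincide with those of the given Archimedean local ordered unit space. Everywhere I will use the defining description $(v_{ij})\in(\mathcal{C}_\alpha^n)^{\min}(V)$ if and only if $\sum_{i,j=1}^{n}\bar\lambda_i\lambda_j v_{ij}\in\mathcal{C}_\alpha$ for all $\lambda_1,\dots,\lambda_n\in\C$; specialising to $n=1$ and $\lambda_1=1$, and using that $\mathcal{C}_\alpha$ is a cone, this gives the identity $(\mathcal{C}_\alpha^1)^{\min}(V)=\mathcal{C}_\alpha=V_\alpha^+$, which is what makes the ground-level reduction work.

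For the first assertion I would take a local positive linear map $\phi\colon W\to\mathrm{LOMIN}(V)$ and fix $\alpha\in\Gamma$; local positivity produces $\beta\in\Omega$ with $\phi(\mathcal{D}_\beta^1)\subseteq(\mathcal{C}_\alpha^1)^{\min}(V)=V_\alpha^+$. Given $n\in\N$ and $(w_{ij})\in\mathcal{D}_\beta^n$, and scalars $\lambda_1,\dots,\lambda_n\in\C$, I would form $\Lambda=[\lambda_1\ \cdots\ \lambda_n]^{t}\in M_{n,1}$ and observe that $\sum_{i,j}\bar\lambda_i\lambda_j w_{ij}=\Lambda^*(w_{ij})\Lambda\in\mathcal{D}_\beta^1$ by axiom (2) of a local matrix ordering (invariance under congruence). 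Applying $\phi$ and using linearity then yields $\sum_{i,j}\bar\lambda_i\lambda_j\,\phi(w_{ij})=\phi\big(\sum_{i,j}\bar\lambda_i\lambda_j w_{ij}\big)\in V_\alpha^+=\mathcal{C}_\alpha$, and since the $\lambda_i$ are arbitrary, $\phi^{(n)}\big((w_{ij})\big)=(\phi(w_{ij}))\in(\mathcal{C}_\alpha^n)^{\min}(V)$. As the same $\beta$ serves every $n$, this shows $\phi$ is completely local positive; the vanishing clause follows by repeating the computation with $\pm(w_{ij})$.

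For the second assertion I would apply what precedes to the identity map in both directions. Because $(\mathcal{C}_\alpha^1)^{\min}(V)=V_\alpha^+=\mathcal{C}_\alpha^1$ for every $\alpha$, the identity $\mathrm{id}\colon V'\to\mathrm{LOMIN}(V)$ is local positive (with the trivial index assignment $\alpha\mapsto\alpha$), and since $V'$ is in particular a local matrix $\ast$-ordered vector space the first assertion makes it completely local positive, i.e. $\mathcal{C}_\alpha^n\subseteq(\mathcal{C}_\alpha^n)^{\min}(V)$ for all $n$, $\alpha$. In the other direction, $\mathrm{LOMIN}(V)$ is a local operator system and $\mathrm{id}\colon\mathrm{LOMIN}(V)\to V'$ is local positive for the same ground-level reason, so the hypothesised universal property of $V'$ forces it to be completely local positive, i.e. $(\mathcal{C}_\alpha^n)^{\min}(V)\subseteq\mathcal{C}_\alpha^n$ for all $n$, $\alpha$. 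Combining the inclusions, the matrix orderings of $V'$ and $\mathrm{LOMIN}(V)$ agree, and since $e$ is an Archimedean matrix order unit for both, the identity is a unital complete local order isomorphism.

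I do not expect a serious obstacle: the whole argument is a transcription of the operator-system proof in \cite{8}. The one point needing attention is the bookkeeping of the two index families $\Gamma$ and $\Omega$ in ``completely local positive'' --- that the index $\beta$ found at the ground level can be reused verbatim at every matrix level (which the congruence computation guarantees) and, in the second assertion, that the ground-level cones agree for the \emph{same} index, so that the two matrix-cone inclusions are genuine inverses rather than merely cofinal refinements of one another.
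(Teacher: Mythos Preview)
Your proposal is correct and is precisely the approach the paper has in mind: the paper states that the proof ``is now trivial'' and supplies no argument, relying on the reader to transcribe the OMIN argument of \cite{8} to the local setting --- which is exactly what you do, via the congruence reduction $\Lambda^*(w_{ij})\Lambda\in\mathcal{D}_\beta^1$ and the defining description of $(\mathcal{C}_\alpha^n)^{\min}(V)$.

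The one point worth a brief remark is the index bookkeeping you flag at the end. Your proof of the first assertion actually shows more than bare complete local positivity: it shows that the \emph{same} source index $\beta$ chosen at the ground level works at every matrix level. For the second assertion one should read the universal hypothesis on $V'$ in this same spirit (local positivity with a given index assignment upgrades to complete local positivity with the \emph{same} assignment), since otherwise the hypothesis only yields $(\mathcal{C}_{\alpha'}^n)^{\min}(V)\subseteq\mathcal{C}_\alpha^n$ for some $\alpha'$ depending on $\alpha$, which is a cofinal refinement rather than an equality. The paper does not spell this out either, and at its level of rigor your treatment is entirely adequate; but since you already isolate the issue, it would do no harm to make the ``same index'' reading explicit when you invoke the hypothesis on $V'$.
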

              	

              \begin{corollary}
              	Let $(V,\{V_\alpha^+\}$ :$\alpha \in \Gamma,e)$ be an Archimedean local ordered unit space. If $\left(V,\left\{\{\mathcal{C}_\alpha^n\}_{n=1}^\infty \; : \; \alpha \in \Gamma\right\},e\right)$ is any local operator system on $V$ with $\mathcal{C}_\alpha ^1$=$V_\alpha^+$, for each $\alpha$, then $\mathcal{C}_\alpha^n$ $\subseteq(\mathcal{C}_\alpha^n)^{\min}(V)$ for all $n$ and all $\alpha$. 	
              \end{corollary}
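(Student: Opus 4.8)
The plan is to reduce the inclusion to the congruence axiom of a local matrix ordering together with the very definition of $(\mathcal{C}_\alpha^n)^{\min}(V)$. Recall that, by hypothesis, $\mathcal{C}_\alpha^1 = V_\alpha^+ = \mathcal{C}_\alpha$ for every $\alpha$, so the ground-level orders of the two structures on $V$ coincide; this is exactly what makes the comparison meaningful, since the cones $(\mathcal{C}_\alpha^n)^{\min}(V)$ are built from $\mathcal{C}_\alpha = V_\alpha^+$.

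First I would fix $\alpha \in \Gamma$ and $n \in \mathbb{N}$ and take an arbitrary $(v_{ij}) \in \mathcal{C}_\alpha^n$. To show $(v_{ij}) \in (\mathcal{C}_\alpha^n)^{\min}(V)$ it suffices, by the defining description of this cone, to check that $\sum_{i,j=1}^n \bar{\lambda_i}\lambda_j v_{ij} \in \mathcal{C}_\alpha$ for all scalars $\lambda_1,\ldots,\lambda_n \in \mathbb{C}$. For this, fix such scalars and set $X = (\lambda_1,\ldots,\lambda_n)^T \in M_{n,1}$. Axiom (2) in the definition of a local matrix ordering gives $X^* \mathcal{C}_\alpha^n X \subseteq \mathcal{C}_\alpha^1$, and a direct computation in $M_1(V) = V$ shows $X^*(v_{ij})X = \sum_{i,j=1}^n \bar{\lambda_i}\lambda_j v_{ij}$. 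Hence $\sum_{i,j=1}^n \bar{\lambda_i}\lambda_j v_{ij} \in \mathcal{C}_\alpha^1 = \mathcal{C}_\alpha$, and since $\lambda_1,\ldots,\lambda_n$ were arbitrary we conclude $(v_{ij}) \in (\mathcal{C}_\alpha^n)^{\min}(V)$, as required.

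Alternatively, the corollary follows immediately from the second part of Theorem \ref{univlomin}: the identity map on $V$ is a local positive map from the given local operator system $(V,\{\mathcal{C}_\alpha^n\},e)$ into $\mathrm{LOMIN}(V)$ (the two agree at level $1$, both having ground cone $V_\alpha^+$), hence by the universal property of $\mathrm{LOMIN}(V)$ it is completely local positive, which is precisely the assertion $\mathcal{C}_\alpha^n \subseteq (\mathcal{C}_\alpha^n)^{\min}(V)$ for all $n$ and all $\alpha$.

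There is no real obstacle here; the statement is essentially a formal consequence of the congruence axiom and the definition. The only point requiring a moment's care is bookkeeping: one must make sure that the symbol $\mathcal{C}_\alpha$ occurring inside the definition of $(\mathcal{C}_\alpha^n)^{\min}(V)$ is identified with the level-one cone $\mathcal{C}_\alpha^1$ of the given structure via the hypothesis $\mathcal{C}_\alpha^1 = V_\alpha^+$, so that the chain of memberships closes up.
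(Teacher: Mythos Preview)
Your proposal is correct. Your alternative argument via the universal property of $\mathrm{LOMIN}(V)$ is exactly the paper's proof: the identity map from the given local operator system to $\mathrm{LOMIN}(V)$ is local positive (since the level-one cones agree), hence completely local positive by Theorem~\ref{univlomin}, which gives the inclusion.

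Your primary argument, however, is genuinely more direct than the paper's. Instead of invoking the universal property, you go straight to the definition of $(\mathcal{C}_\alpha^n)^{\min}(V)$ and verify the required scalar-compression condition using only the compatibility axiom $X^*\mathcal{C}_\alpha^n X\subseteq\mathcal{C}_\alpha^1$ with $X$ a column vector. This is self-contained and bypasses Theorem~\ref{univlomin} entirely; it also makes transparent \emph{why} the minimal structure is maximal among cone families with the given base. The paper's route has the advantage of being a one-line application of an already-established universal property, but at the cost of relying on that theorem. Both are valid; yours is the argument one would give if asked to prove the corollary from scratch.
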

              \begin{proof} 
              	The identity map from $\left(V,\left\{\{\mathcal{C}_\alpha^n\}_{n=1}^\infty\; : \; \alpha \in \Gamma\right\},e\right)$ to LOMIN(V) is positive and hence by Theorem  \ref{univlomin} we have the result.
              \end{proof}
              
\subsection{LOMAX structure on Archimedean local ordered unit space}
              Now we discuss another important cone structures on an Archimedean local order unit space. For any $\ast$-vector space $V$, we identify the vector space $M_n(V)$ of $n\times n$ matrices with entries in $V$ with the (algebraic) tensor product $M_n\otimes V$ in a natural way.
              
              \begin{definition}\label{lomax1}
              	Let $(V,\{V_\alpha^+$ : $\alpha \in \Gamma\},e)$ be a local order $\ast$-vector space. Define $(\mathcal{D}_\alpha^n)^{\max}(V)$=\{$\sum\limits_{i=1}^k a_i\otimes v_i$; $v_i \in V_\alpha^+, a_i \in M_n^+, i=1,2,...,k;k \in \mathbb{N}$\} and $D^{\max}_\alpha(V)$=$\{(\mathcal{D}_\alpha^n)^{\max}(V)\}_{n=1}^\infty$ for each $\alpha$.
              \end{definition}
              
              \begin{lemma}\label{lomaxcone}
              	Let $(V,\{V_\alpha^+$ : $\alpha \in \Gamma\})$ be a local ordered $\ast$-vector space. Suppose that $P_\alpha^n\subseteq M_n(V)_h$ is a cone for each $n\in \mathbb{N}$ for each $\alpha$, the family \{$P_\alpha ^n$\} is a compatible matrix ordering i.e. $X^*P_\alpha^m X \subseteq P_\alpha^n$ where $x \in M_{m \times n}(\mathbb{C})$ and $P_\alpha$=$V_\alpha$ for each $\alpha$. Then $(\mathcal{D}_\alpha^n)^{\max}(V) \subseteq P_\alpha ^n$ $\forall n \in \mathbb{N}, \forall \alpha \in \Gamma$. 
              \end{lemma}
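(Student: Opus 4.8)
The plan is to peel the statement apart one layer at a time until everything rests on the single hypothesis $X^{*}P_\alpha^{1}X\subseteq P_\alpha^{n}$ together with $P_\alpha^{1}=V_\alpha^{+}$ and the fact that each $P_\alpha^{n}$ is a cone. Fix $\alpha\in\Gamma$ and $n\in\N$, and take an arbitrary $w=\sum_{i=1}^{k}a_i\otimes v_i\in(\mathcal{D}_\alpha^{n})^{\max}(V)$ as in Definition \ref{lomax1}, with $a_i\in M_n^{+}$ and $v_i\in V_\alpha^{+}$. Because $P_\alpha^{n}$ is closed under finite sums, it is enough to show that each summand $a_i\otimes v_i$ lies in $P_\alpha^{n}$. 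Note first that $a_i\otimes v_i$ is hermitian, since $a_i=a_i^{*}$ and $v_i=v_i^{*}$, so it does belong to the hermitian part $M_n(V)_h$ where $P_\alpha^{n}$ lives; and the hypothesis ``$P_\alpha=V_\alpha$'' is read as $P_\alpha^{1}=V_\alpha^{+}$, so that $v_i\in P_\alpha^{1}$.

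Next I would invoke the spectral theorem to write $a_i=\sum_{j}\lambda_j\,\xi_j\xi_j^{*}$ with $\lambda_j\ge 0$ and $\xi_j\in\C^{n}$. Under the identification $M_n(V)=M_n\otimes V$ one then has
\[
a_i\otimes v_i=\sum_{j}\lambda_j\,(\xi_j\xi_j^{*})\otimes v_i=\sum_{j}\bigl(\sqrt{\lambda_j}\,\xi_j\bigr)\,v_i\,\bigl(\sqrt{\lambda_j}\,\xi_j\bigr)^{*}.
\]
For each $j$ put $X_j=\bigl(\sqrt{\lambda_j}\,\xi_j\bigr)^{*}\in M_{1\times n}(\C)$. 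Viewing $v_i$ as the element of $M_1(V)_h$ given by $v_i\in V_\alpha^{+}=P_\alpha^{1}$, the congruence $X_j^{*}v_iX_j$ equals precisely the $j$-th term $\bigl(\sqrt{\lambda_j}\,\xi_j\bigr)\,v_i\,\bigl(\sqrt{\lambda_j}\,\xi_j\bigr)^{*}$ above, so it lies in $P_\alpha^{n}$ by the compatibility hypothesis applied with $m=1$. Summing over $j$ (cones are closed under finite sums) gives $a_i\otimes v_i\in P_\alpha^{n}$, and summing over $i$ gives $w\in P_\alpha^{n}$. Since $w$, $\alpha$ and $n$ were arbitrary, $(\mathcal{D}_\alpha^{n})^{\max}(V)\subseteq P_\alpha^{n}$ for all $n\in\N$ and all $\alpha\in\Gamma$.

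The only genuinely delicate point is the bookkeeping in the identification $M_n(V)\cong M_n\otimes V$: one must verify that $X_j^{*}v_iX_j$ has $(p,q)$-entry $\lambda_j\,(\xi_j)_p\,\overline{(\xi_j)_q}\,v_i$, so that it coincides with $\lambda_j\,(\xi_j\xi_j^{*})\otimes v_i$, i.e.\ that the scalar congruence $X\mapsto X^{*}(\,\cdot\,)X$ on $1\times1$ matrices over $V$ really corresponds to $(\xi\xi^{*})\otimes(\,\cdot\,)$. Once this translation is pinned down, no estimates are needed and the argument is purely algebraic; in particular it does not use the Archimedean order unit or any topology, only the cone structure and the compatibility of the matrix ordering.
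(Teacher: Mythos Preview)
Your proof is correct and follows essentially the same route as the paper: reduce to showing $a\otimes v\in P_\alpha^n$ for $a\in M_n^+$ and $v\in V_\alpha^+$, then use the spectral decomposition of $a$ into rank-one positives together with the compatibility condition $X^{*}P_\alpha^{1}X\subseteq P_\alpha^{n}$ for $X\in M_{1\times n}$. The paper states this more tersely (it just says ``every element of $M_n^+$ is the sum of rank one elements of $M_n^+$''), while you spell out the identification $X_j^{*}v_iX_j=\lambda_j(\xi_j\xi_j^{*})\otimes v_i$ explicitly, but the argument is the same.
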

              \begin{proof} 
              	We have \{$P_\alpha ^n$\} family of cones at each matrix level with $P_\alpha^1$=$V_\alpha,$ $\forall \alpha$. If $X \in M_{n,1}$ then $XV_\alpha X^*$=$XP_\alpha^1 X^* \subseteq P_\alpha ^n$. It follows that $a \otimes v \in P_\alpha ^n$ for each $a \in M_n^+$ of rank one and $v \in V_\alpha ^+$. Since every element of $M_n^+$ is the sum of rank one elements of $M_n^+$, we conclude that $a\otimes v \in P_\alpha ^n,$ for all $a \in M_n^+$ and all $v \in V_\alpha ^+$. Thus $(\mathcal{D}_\alpha^n)^{\max}(V) \subseteq P_\alpha ^n,$ $\forall n \in \mathbb{N}$ and $\forall \alpha \in \Gamma$.
\end{proof}

              \begin{proposition}
              	Let $(V,\{V_\alpha^+$ : $\alpha \in \Gamma\},e)$ be an Archimedean local order unit space, then the cones $(\mathcal{D}_\alpha^n)^{\max}(V)$ are given by $$(\mathcal{D}_\alpha^n)^{\max}(V)=\{\gamma \mathrm{diag}(v_1,v_2,\ldots,v_m)\gamma ^*\; :\; \gamma \in M_{n,m},v_i \in V_\alpha^+,i=1,2,\ldots,m;m\in \mathbb{N}\}.$$
              \end{proposition}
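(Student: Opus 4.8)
The plan is to prove the two inclusions between $(\mathcal{D}_\alpha^n)^{\max}(V)$ as defined in Definition \ref{lomax1} and the set
\[
R_\alpha^n:=\{\gamma\,\mathrm{diag}(v_1,\ldots,v_m)\,\gamma^*\;:\;\gamma\in M_{n,m},\ v_i\in V_\alpha^+,\ m\in\mathbb{N}\}.
\]
For the inclusion $R_\alpha^n\subseteq(\mathcal{D}_\alpha^n)^{\max}(V)$, I would take a typical element $\gamma\,\mathrm{diag}(v_1,\ldots,v_m)\,\gamma^*$ and, writing $\gamma=[\gamma_1\mid\cdots\mid\gamma_m]$ in terms of its columns $\gamma_i\in M_{n,1}$, expand the matrix product to get $\gamma\,\mathrm{diag}(v_1,\ldots,v_m)\,\gamma^*=\sum_{i=1}^m \gamma_i\gamma_i^*\otimes v_i$ (identifying $M_n(V)$ with $M_n\otimes V$ as the paper does just before Definition \ref{lomax1}). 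Since each $\gamma_i\gamma_i^*$ is a positive rank-one matrix in $M_n^+$ and each $v_i\in V_\alpha^+$, this exhibits the element as a sum $\sum a_i\otimes v_i$ with $a_i\in M_n^+$, $v_i\in V_\alpha^+$, which is exactly the form required for membership in $(\mathcal{D}_\alpha^n)^{\max}(V)$.

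For the reverse inclusion $(\mathcal{D}_\alpha^n)^{\max}(V)\subseteq R_\alpha^n$, I would start from a generic element $\sum_{i=1}^k a_i\otimes v_i$ with $a_i\in M_n^+$ and $v_i\in V_\alpha^+$. Each $a_i\in M_n^+$ factors as $a_i=\beta_i\beta_i^*$ for some $\beta_i\in M_{n,r_i}$ (e.g. via the square root or a Cholesky-type factorization), so $a_i\otimes v_i=\beta_i(I_{r_i}\otimes v_i)\beta_i^*=\beta_i\,\mathrm{diag}(v_i,\ldots,v_i)\,\beta_i^*$. Summing over $i$, set $m=\sum_i r_i$, let $\gamma=[\beta_1\mid\beta_2\mid\cdots\mid\beta_k]\in M_{n,m}$, and let the diagonal be $\mathrm{diag}(v_1,\ldots,v_1,v_2,\ldots,v_2,\ldots,v_k,\ldots,v_k)$ with each $v_i$ repeated $r_i$ times; then block multiplication gives $\gamma\,\mathrm{diag}(\cdots)\,\gamma^*=\sum_{i=1}^k\beta_i\,\mathrm{diag}(v_i,\ldots,v_i)\,\beta_i^*=\sum_{i=1}^k a_i\otimes v_i$. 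This places the element in $R_\alpha^n$, completing the proof.

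I do not expect a genuine obstacle here: the argument is essentially the classical identity used to describe OMAX cones (as in \cite{8}) carried out at a fixed index $\alpha$, and nothing about the local/projective structure interferes since all operations happen inside the single cone $V_\alpha^+$ and the ordinary matrix cones $M_n^+$. The only mild point of care is bookkeeping of the block sizes $r_i$ and making sure the concatenated $\gamma$ and the repeated diagonal are compatible; I would state this cleanly once rather than belabor it. One could also note that the equivalence already follows abstractly from Lemma \ref{lomaxcone}: the family $\{R_\alpha^n\}_n$ is easily checked to be a compatible matrix ordering with $R_\alpha^1=V_\alpha^+$, so $(\mathcal{D}_\alpha^n)^{\max}(V)\subseteq R_\alpha^n$ is immediate from that lemma, and the reverse inclusion is the direct factorization computation above; I would present whichever of these two routes is shorter.
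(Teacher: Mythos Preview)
Your proposal is correct. The paper's own proof follows your second suggested route: it checks that the right-hand side $R_\alpha^n$ is a compatible family of cones with $R_\alpha^1=V_\alpha^+$, then invokes Lemma~\ref{lomaxcone} (the paper actually cites Proposition~\ref{closure} at this step, which appears to be a misreference) to obtain $(\mathcal{D}_\alpha^n)^{\max}(V)\subseteq R_\alpha^n$, and appeals to compatibility of the $(\mathcal{D}_\alpha^n)^{\max}$ family for the reverse inclusion. Your primary route via the explicit column expansion $\gamma\,\mathrm{diag}(v_1,\ldots,v_m)\,\gamma^*=\sum_i\gamma_i\gamma_i^*\otimes v_i$ and the block-concatenation factorization is a valid, more hands-on alternative that bypasses the lemma entirely; the two arguments are essentially the same identity viewed abstractly versus in coordinates, and either is acceptable here.
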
	
\begin{proof}               	Let $\mathcal{D}_\alpha ^n$ denote the set in the right hand side. By some simple computation we can show that $\mathcal{D}_\alpha ^n$ is a cone in $M_n(V)_h$.
               $\{\mathcal{D}_\alpha ^n\}_{n=1}^\infty$ is compatible for each $\alpha$. It is also clear that $\mathcal{D}_\alpha ^1$=$V_\alpha^+$. By Proposition \ref{closure}, we have that $(\mathcal{D}_\alpha^n)^{\max}(V) \subseteq \mathcal{D}_\alpha^n$ for each $n \in \mathbb{N}$ and $\alpha \in \Gamma$.	
              By the compatibility of $\{\mathcal{D}_\alpha ^n\}_{n=1}^\infty$ we have the reverse inclusion. 
              so we have $\mathcal{D}_\alpha ^n$=$(\mathcal{D}_\alpha ^n)^{\max}(V)$.
              \end{proof}
             
              The next result is now obvious.
              \begin{proposition}
              	Let $(V,\{V_\alpha^+$ : $\alpha \in \Gamma\},e)$ be an Archimedean local order unit space then $(\mathcal{D}_\alpha^n)^{\max}(V)$ defined in the Definition \ref{lomax1},  is local matrix ordering on V and $e$ is a matrix local order unit for this ordering.
              \end{proposition}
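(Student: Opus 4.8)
The plan is to verify, one at a time, the ingredients packed into the statement: that each $(\mathcal{D}_\alpha^n)^{\max}(V)$ is a cone inside $M_n(V)_h$; that for every fixed $n$ the family $\{(\mathcal{D}_\alpha^n)^{\max}(V):\alpha\in\Gamma\}$ makes $M_n(V)$ a local $\ast$-ordered vector space; that the family is matrix compatible; and finally that $e$ is a matrix local order unit. Three of these are bookkeeping. Under the identification $M_n(V)=M_n\otimes V$, a generator $a\otimes v$ of $(\mathcal{D}_\alpha^n)^{\max}(V)$, with $a\in M_n^+$ and $v\in V_\alpha^+\subseteq V_h$, satisfies $(a\otimes v)^\ast=a^\ast\otimes v^\ast=a\otimes v$, so any finite sum of such generators is self-adjoint; closure under addition and under multiplication by nonnegative scalars is immediate from Definition~\ref{lomax1}. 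Hence $(\mathcal{D}_\alpha^n)^{\max}(V)$ is a cone in $M_n(V)_h$ (it need not be proper for a single $\alpha$, which is permitted). Compatibility is the identity $X^\ast(a\otimes v)X=(X^\ast aX)\otimes v$ together with $X^\ast aX\in M_m^+$ whenever $a\in M_n^+$ and $X\in M_{n,m}$, which gives $X^\ast(\mathcal{D}_\alpha^n)^{\max}(V)X\subseteq(\mathcal{D}_\alpha^m)^{\max}(V)$.

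For the separation axiom $\bigcap_{\alpha}\big((\mathcal{D}_\alpha^n)^{\max}(V)\cap-(\mathcal{D}_\alpha^n)^{\max}(V)\big)=\{0\}$ I would compare with the minimal structure rather than argue head-on. At the first matrix level $(\mathcal{D}_\alpha^1)^{\max}(V)=V_\alpha^+$, so the claim is exactly the defining separation property of the local $\ast$-ordered vector space $V$. For $n\ge 2$, the LOMIN cones $\{(\mathcal{C}_\alpha^n)^{\min}(V)\}$ form a compatible matrix ordering whose first level is $V_\alpha^+$ --- part of the fact, established above, that $\mathrm{LOMIN}(V)$ is a local operator system --- so Lemma~\ref{lomaxcone}, applied with $P_\alpha^n=(\mathcal{C}_\alpha^n)^{\min}(V)$, yields $(\mathcal{D}_\alpha^n)^{\max}(V)\subseteq(\mathcal{C}_\alpha^n)^{\min}(V)$ for all $n$ and $\alpha$. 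Since $\mathrm{LOMIN}(V)$ is a local operator system, $M_n(\mathrm{LOMIN}(V))$ is a local $\ast$-ordered vector space, so $\bigcap_{\alpha}\big((\mathcal{C}_\alpha^n)^{\min}(V)\cap-(\mathcal{C}_\alpha^n)^{\min}(V)\big)=\{0\}$, and the inclusion just noted carries this down to the max cones. Together with the first paragraph, this shows $\big\{(\mathcal{D}_\alpha^n)^{\max}(V)\big\}$ is a local matrix ordering on $V$.

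The one step requiring real work is that $e_n=\mathrm{diag}(e,\dots,e)$ is an order unit for $\big(M_n(V),\{(\mathcal{D}_\alpha^n)^{\max}(V):\alpha\in\Gamma\}\big)$ for each $n$: given $A=(v_{ij})\in M_n(V)_h$ and $\alpha\in\Gamma$, I must produce $r>0$ with $re_n-A\in(\mathcal{D}_\alpha^n)^{\max}(V)$. I would split $A=D+O$ with $D=\sum_i E_{ii}\otimes v_{ii}$ (so $v_{ii}\in V_h$, as $A$ is self-adjoint) and $O=\sum_{i<j}\big(E_{ij}\otimes v_{ij}+E_{ji}\otimes v_{ij}^\ast\big)$. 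Since $e$ is an order unit of $V$ one has $e\in V_\alpha^+$ and there is $r_0>0$ with $r_0e-v_{ii}\in V_\alpha^+$ for all $i$, so $r_0e_n-D=\sum_i E_{ii}\otimes(r_0e-v_{ii})\in(\mathcal{D}_\alpha^n)^{\max}(V)$. For a fixed pair $i<j$, write $v_{ij}=c+\iu d$ with $c,d\in V_h$, choose $\rho>0$ with $\rho e\pm c$ and $\rho e\pm d$ in $V_\alpha^+$, and check the identity
\[
\left(\begin{smallmatrix} 2\rho e & -v_{ij}\\ -v_{ij}^{\ast} & 2\rho e\end{smallmatrix}\right)=\sum_{z\in\{1,-1,\iu,-\iu\}}\left(\begin{smallmatrix}1 & z\\ \bar z & 1\end{smallmatrix}\right)\otimes w_z,\qquad w_{\pm1}=\tfrac12(\rho e\mp c),\ \ w_{\pm\iu}=\tfrac12(\rho e\mp d),
\]
whose summands all lie in $(\mathcal{D}_\alpha^2)^{\max}(V)$ because each $\left(\begin{smallmatrix}1&z\\ \bar z&1\end{smallmatrix}\right)$ is a positive $2\times2$ scalar matrix and each $w_z\in V_\alpha^+$; conjugating this $2\times2$ element by the evident $W\in M_{2,n}$ embedding coordinates $1,2$ onto $i,j$ (i.e. applying $W^\ast(\cdot)W$) and using compatibility gives $2\rho(E_{ii}+E_{jj})\otimes e-\big(E_{ij}\otimes v_{ij}+E_{ji}\otimes v_{ij}^\ast\big)\in(\mathcal{D}_\alpha^n)^{\max}(V)$. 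Summing over the $\binom n2$ pairs, using $\sum_{i<j}(E_{ii}+E_{jj})=(n-1)I_n$ and absorbing into a large multiple of $e_n=I_n\otimes e$ the gap between $\sum_{i<j}2\rho_{ij}(E_{ii}+E_{jj})$ and $2(\max_{i<j}\rho_{ij})(n-1)I_n$, one gets $se_n-O\in(\mathcal{D}_\alpha^n)^{\max}(V)$ for $s$ large, whence $re_n-A=(r_0e_n-D)+(se_n-O)+(r-r_0-s)e_n\in(\mathcal{D}_\alpha^n)^{\max}(V)$ as soon as $r\ge r_0+s$. The heart of the matter, and the place I would expect to spend effort, is this $2\times2$ decomposition: it must reproduce both the real and imaginary parts of $v_{ij}$ off the diagonal while leaving every diagonal entry inside $V_\alpha^+$, and the four positive matrices $\left(\begin{smallmatrix}1&z\\ \bar z&1\end{smallmatrix}\right)$ with $|z|=1$ are precisely what make this possible; everything around it is routine.
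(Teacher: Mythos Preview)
Your proof is correct and complete. The paper itself gives no argument for this proposition beyond the line ``The next result is now obvious,'' so there is no method to compare against. Your verification is the natural one: cone and compatibility are bookkeeping; the separation axiom is obtained cleanly by bounding the $\max$ cones above by the LOMIN cones via Lemma~\ref{lomaxcone} and borrowing separation from LOMIN; and the order-unit property of $e_n$ is handled by the standard four-term $2\times2$ decomposition over $z\in\{1,-1,\iu,-\iu\}$, which is exactly the device used in the operator-system setting of \cite{8} that the authors are implicitly invoking. The one omission is the downward-filtered condition on the family $\{(\mathcal{D}_\alpha^n)^{\max}(V):\alpha\in\Gamma\}$, but this is inherited immediately from the filtering of $\{V_\alpha^+\}$ and is not worth singling out.
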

          
        \begin{remark}Let $(V,\{V_\alpha^+$ : $\alpha \in \Gamma\},e)$ be an Archimedean local order unit space.
        	By Lemma \ref{lomaxcone}, we have that $(\mathcal{D}_\alpha^n)^{\max}(V)$ $\subseteq$ $P_\alpha ^n$, where $\left\{\{P_\alpha^n\}_{n=1}^\infty : \alpha \in \Gamma\right\}$ is a local matrix ordering on V with $P_\alpha ^1$ = $V_\alpha ^+$ for each $\alpha \in \Gamma$. Thus $(\mathcal{D}_\alpha^n)^{\max}(V)$ is strongest cone structure on V. Though $e$ need not be Archimedean matrix order unit. As in Example \ref{3.6}(ii) if we take cone corresponding to the interval $[0,1]$ then I the constant function taking value 1 is not Archimedean matrix order unit explained in \cite{8}. But we can Archimedeanize it to get a local operator system on V, which is the strongest local operator system and we call that local opeartor system $LOMAX(V).$
      \end{remark}

\section{Tensor products of local operator systems}\label{s:4}
Inspired by the work \cite{kavruk2011tensor}, we here attempt to define tensor product in the category of local operator systems. Analogous to the minimal $\min$ and maximal $\max$ operator system tensor products, we also introduce $\lmin$ and $\lmax$ tensor products, and the notion of nuclearity in the category of local operator systems. Since every operator system is also a local operator system, this section is essentially generalization of \cite{kavruk2011tensor}. 
\begin{definition}\label{def_tensor}
	Given local operator systems $\left(V, \left\{\{\mathcal{C}_\alpha ^n\}_{n=1} ^{\infty}\; : \; \alpha \in \Gamma\right\}, e_V\right)$ and         \linebreak        
	$\left(W, \left\{\{\mathcal{D}_\beta ^n\}_{n=1} ^{\infty}\; : \; \beta \in \Lambda \right\},e_W\right),$ a local operator system structure ${l\tau}$ on $V \otimes W$ is a matricial cone structure given by $\left\{\{\mathcal{T}_{\gamma}^n\}_{n=1}^\infty :\gamma\in \Omega \right\}$  where $\Omega \cong \Gamma \times \Lambda$ such that:
	\begin{enumerate}
		\item $\left(V \otimes W,\left\{\{\mathcal{T}_{\gamma}^n\}_{n=1}^\infty :\gamma \in \Omega\}\right\},e_V \otimes e_W \right)$ is a local operator system.
		\item For every $\alpha \in \Gamma$ and $\beta \in \Lambda$, there exists a $\gamma \in\Omega$ such that $\mathcal{C}_\alpha^n \otimes \mathcal{D}_\beta^m \subseteq \mathcal{T}_{\gamma}^{nm}$ for all $n,m \in \mathbb{N}$ and for every $\gamma \in \Omega $, there exist $\alpha\in \Gamma$ and $\beta \in \Lambda$ such that $\mathcal{C}_\alpha^n \otimes \mathcal{D}_\beta^m \subseteq \mathcal{T}_{\gamma}^{nm}$ for all $n,m \in \mathbb{N}$. 
		\item If $\phi \in LUCP(V,M_n)$ and $\psi \in LUCP(W,M_m)$ w.r.t $\mathcal{C}_\alpha$ and $\mathcal{D}_\beta$ respectively, then $\phi \otimes \psi\in LUCP(V\otimes W,M_{nm})$ w.r.t $\mathcal{T}_{(\alpha,\beta)}$ for all $n,m \in \mathbb{ N}$. 
	\end{enumerate} 
	If for every pair of local operator systems V and W, $l\tau$ is a local operator system structure on $V \otimes W$, it is said to be a local operator system tensor product, denoted by $V \otimes _{l\tau} W$.
\end{definition}

As in the category of operator systems, a local opeartor system tensor product $l\tau$ is \emph{functorial} if for any four local operator systems $V_1,V_2,W_1,W_2$ we have that $\phi\in LUCP(V_1,V_2)$ and $\psi \in LUCP(W_1,W_2)$ implies the linear map $\phi \otimes \psi : V_1\otimes W_1 \to V_2 \otimes W_2$ belongs to $LUCP(V_1\otimes_ {l\tau}W_1,V_2\otimes _{l\tau}W_2)$. Given $V,W$ local operator systems, $l\tau$ is \emph{a symmetric local operator system tensor product} if the map $\theta: v\otimes w \to w \otimes v$ extends to a local unital complete order isomorphism from $V \otimes_{l\tau} W $ onto $W \otimes_{l\tau} V $. If for any three local operator systems $U,V,W$ the natural isomorphism yields a local complete order isomorphism from $(U\otimes_{l\tau} V)\otimes_{l\tau} W$ onto $U\otimes_{l\tau}(V \otimes _{l\tau} W)$ then $l\tau$ is called \emph{associative local operator system tensor product}. Given local operator systems $V_1\subseteq V_2$ and $W_1 \subseteq W_2$, if the inclusion map $V_1\otimes_{l\tau} W_1 \subseteq V_2\otimes_{l\tau}W_2$ is a local complete order isomorphism onto its range then $l\tau$ is \emph{injective local operator system tensor product.}  

\begin{proposition}\label{opsys_tp} Let $V$ and $W$ be two local operator systems such that $V=\underset{\longleftarrow}{\lim}V_\alpha$ and $W=\underset{\longleftarrow}{\lim}W_\beta$. Then corresponding to any operator system tensor product $\eta$, we have a local tensor product $\eta_l$ such that $V\otimes_{\eta_l} W$=$\underset{\longleftarrow}{\lim}V_\alpha\otimes_\eta W_\beta$.\end{proposition}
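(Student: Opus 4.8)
The plan is to realize $\eta_l$ as the local operator system carried by the projective limit of the operator systems $V_\alpha\otimes_\eta W_\beta$, and then to deduce the three conditions of Definition~\ref{def_tensor} by transporting the corresponding facts about $\eta$ through the limit projections. Write $f_{\alpha\alpha'}\colon V_{\alpha'}\to V_\alpha$ and $g_{\beta\beta'}\colon W_{\beta'}\to W_\beta$ for the (unital completely positive) connecting maps and $\pi_\alpha\colon V\to V_\alpha$, $\rho_\beta\colon W\to W_\beta$ for the limit projections, which after reducing the systems we may assume surjective. Using functoriality of $\eta$ (valid for the standard operator system tensor products), each $f_{\alpha\alpha'}\otimes g_{\beta\beta'}\colon V_{\alpha'}\otimes_\eta W_{\beta'}\to V_\alpha\otimes_\eta W_\beta$ is unital completely positive and the cocycle and normalization identities pass to the tensor products, so $\{(V_\alpha\otimes_\eta W_\beta,\, f_{\alpha\alpha'}\otimes g_{\beta\beta'})\}$ is a projective system of operator systems over the directed set $\Gamma\times\Lambda$. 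By the projective limit construction in Subsection~\ref{projective}, the limit $Z:=\varprojlim_{(\alpha,\beta)}V_\alpha\otimes_\eta W_\beta$ is an abstract local operator system with matricial cones $\mathcal{T}^n_{(\alpha,\beta)}=\{z\in M_n(Z):(\pi_\alpha\otimes\rho_\beta)^{(n)}(z)\in M_n(V_\alpha\otimes_\eta W_\beta)^+\}$ and Archimedean matrix order unit $e_V\otimes e_W$. The canonical bilinear map $(v,w)\mapsto(\pi_\alpha(v)\otimes\rho_\beta(w))_{(\alpha,\beta)}$ induces a linear map $\Theta\colon V\otimes W\to Z$; granting that $\Theta$ is a linear isomorphism, I would \emph{define} $\eta_l$ on $V\otimes W$ by pulling the cones of $Z$ back along $\Theta$, so that $V\otimes_{\eta_l}W=Z=\varprojlim V_\alpha\otimes_\eta W_\beta$ holds by construction.

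It then remains to verify Definition~\ref{def_tensor}(1)--(3). Condition~(1) is precisely the fact that $Z$ is a local operator system, already established. For~(2), fix $\alpha\in\Gamma$, $\beta\in\Lambda$ and take $\gamma=(\alpha,\beta)$: the projective limit cones satisfy $\pi_\alpha^{(n)}(\mathcal{C}^n_\alpha)\subseteq M_n(V_\alpha)^+$ and $\rho_\beta^{(m)}(\mathcal{D}^m_\beta)\subseteq M_m(W_\beta)^+$, and $M_n(V_\alpha)^+\otimes M_m(W_\beta)^+\subseteq M_{nm}(V_\alpha\otimes_\eta W_\beta)^+$ by the compatibility axiom for the operator system tensor product $\eta$; composing, $(\pi_\alpha\otimes\rho_\beta)^{(nm)}$ sends $\mathcal{C}^n_\alpha\otimes\mathcal{D}^m_\beta$ into $M_{nm}(V_\alpha\otimes_\eta W_\beta)^+$, i.e. $\mathcal{C}^n_\alpha\otimes\mathcal{D}^m_\beta\subseteq\mathcal{T}^{nm}_{(\alpha,\beta)}$ for all $n,m$, and the second clause of~(2) holds with the same pair $(\alpha,\beta)$.

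For~(3), let $\phi\in LUCP(V,M_n)$ with respect to $\mathcal{C}_\alpha$ and $\psi\in LUCP(W,M_m)$ with respect to $\mathcal{D}_\beta$. Since the coordinate cones of a projective limit are proper one has $\mathcal{C}_\alpha\cap(-\mathcal{C}_\alpha)=\ker\pi_\alpha$, and positivity of $\phi$ (with $M_n^+$ proper) forces $\phi$ to vanish there, so $\phi$ descends to a unital completely positive map $\bar\phi\colon V_\alpha\to M_n$; similarly $\psi=\bar\psi\circ\rho_\beta$ with $\bar\psi\colon W_\beta\to M_m$ unital completely positive. Because $\eta$ is an operator system tensor product, $\bar\phi\otimes\bar\psi\colon V_\alpha\otimes_\eta W_\beta\to M_n\otimes_\eta M_m=M_{nm}$ is unital completely positive, and $\phi\otimes\psi$ is $\bar\phi\otimes\bar\psi$ precomposed with the limit projection of $Z$ at the coordinate $(\alpha,\beta)$; hence $\phi\otimes\psi\in LUCP(V\otimes W,M_{nm})$ with respect to $\mathcal{T}_{(\alpha,\beta)}$. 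As nothing in the argument is special to $V$ or $W$, $\eta_l$ is a local operator system tensor product, and the same factorization shows it inherits functoriality and symmetry from $\eta$ whenever those hold.

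The one step I expect to be genuinely delicate is the claim that $\Theta$ is a linear isomorphism — that is, the identification of the \emph{algebraic} tensor product $V\otimes W$ with the projective limit $\varprojlim(V_\alpha\otimes W_\beta)$ of algebraic tensor products. Injectivity should follow from separatedness of the defining families of seminorms (equivalently $\bigcap_\alpha\ker\pi_\alpha=\{0\}$ and the analogue for $W$) together with a routine linear-independence argument, and surjectivity onto the limit from the cofinality of $\Gamma\times\Lambda$ and the surjectivity of the reduced projections; everything else is the mechanical transfer of the operator system tensor axioms along the maps $\pi_\alpha\otimes\rho_\beta$ performed above.
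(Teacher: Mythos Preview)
Your approach is essentially the paper's: define $\eta_l$ via the projective limit of the operator systems $V_\alpha\otimes_\eta W_\beta$, obtain condition~(1) from Subsection~\ref{projective}, check~(2) using the operator-system tensor axiom $M_n(V_\alpha)^+\otimes M_m(W_\beta)^+\subseteq M_{nm}(V_\alpha\otimes_\eta W_\beta)^+$, and check~(3) by descending $\phi,\psi$ through the projections to unital completely positive $\bar\phi,\bar\psi$ on $V_\alpha,W_\beta$ and applying the tensor axiom to $\bar\phi\otimes\bar\psi$. You are in fact more explicit than the paper in two places it passes over silently: you note that functoriality of $\eta$ is needed to make $\{V_\alpha\otimes_\eta W_\beta\}$ into a projective system, and you isolate the identification map $\Theta\colon V\otimes W\to\varprojlim(V_\alpha\otimes W_\beta)$.

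One caution on the latter: your sketch for surjectivity of $\Theta$ does not go through. Algebraic tensor products do not commute with inverse limits; a coherent family $(z_{(\alpha,\beta)})$ whose tensor rank is unbounded in $(\alpha,\beta)$ need not arise from any single finite sum in $V\otimes W$, so cofinality and surjectivity of the reduced projections are not enough. The paper sidesteps this by simply declaring the cones $\mathcal{T}^{n(\eta_l)}_{(\alpha,\beta)}$ to be subsets of $M_n(V\otimes W)$, i.e.\ as the pullback along $\Theta$, which only requires injectivity. Your argument works verbatim once you drop the surjectivity claim and read the displayed equality in the statement as an embedding of $V\otimes_{\eta_l}W$ into $\varprojlim(V_\alpha\otimes_\eta W_\beta)$ rather than a literal vector-space identity.
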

\begin{proof}  From Section \ref{projective}, we know that the $V$ and $W$ are abstract local operator systems with the family of cones $\mathcal{C}_\alpha ^n$=$\{((v_\alpha)_\alpha)_{ij}\in M_n(V): (v_\alpha)_{ij}\in M_n(V_\alpha)^+\}$ and $\mathcal{D}_\beta ^n$=$\{((w_\beta)_\beta)_{ij}\in M_n(W): (w_\beta)_{ij}\in M_n(W_\beta)^+\}$, respectively. Since, $\lim V_\alpha\otimes_\eta W_\beta$ is an operator system for each $\alpha$ and $\beta$, $\underset{\longleftarrow}{\lim}V_\alpha\otimes_\eta W_\beta$ being the projective limit of operator systems is a local operator system with family of cones defined as $\mathcal{T}_{(\alpha,\beta)}^{n(\eta_l)}=\{((v_\alpha \otimes w_\beta)_{(\alpha,\beta)})_{ij}\in M_n(V\otimes W)$: $((v_\alpha\otimes w_\beta)_{ij}\in M_n(V_\alpha \otimes _\eta W_\beta)^+\}$, that is, $(V \otimes _{\eta_l} W, \mathcal{T}_{(\alpha,\beta)}^{n(\eta_l}),e_V\otimes e_W)$ is a local operator system. Also, $\mathcal{C}_\alpha^n \otimes \mathcal{D}_\beta^m \subseteq  \mathcal{T}_{(\alpha,\beta)}^{nm(\eta_l)}$ for all $(\alpha,\beta)$ and $m,n \in \mathbb{N}$. Further note that if $\phi :V \to M_n$ and $\psi:W\to M_m$ are local unital completely positive maps w.r.t $\mathcal{C}_\alpha$ and $\mathcal{D}_\beta$, then there exist unital completely positive map $\phi_\alpha:V_\alpha \to M_m$ s.t. $\phi_\alpha \circ \pi_\alpha=\pi_m \circ \phi$ and $\psi_\beta \to M_n$ s.t. $\psi_\beta \circ \pi_\beta=\pi_n \circ \psi$. As $\eta$ is a operator system tensor product, so we have $\phi_\alpha\otimes \psi_\beta$: $V_\alpha \otimes _\eta W_\beta \to M_{mn}$ is unital completely positive map. So we have $\phi\otimes \psi$ : $V \otimes W \to M_{nm}$ is local unital completely positive w.r.t $\mathcal{T}_{(\alpha,\beta)}^{(\eta_l})$. Thus $\eta_l$ is a local operator system tensor product in the sense of Definition \ref{def_tensor} such that $V\otimes_{\eta_l} W$=$\underset{\longleftarrow}{\lim}V_\alpha\otimes_\eta W_\beta$.  
\end{proof}

Now we will define tensor product of local operator spaces with similar uniform conditions as that of tensor product of operator spaces. For complete details on tensor product of local operator spaces reader may refer \cite[Section 3.3]{webster1997local}.
\begin{definition}\label{tensor}
	Let $(E,||\cdot||_\rho)$ where $\rho \in \Gamma$ and $(F,||\cdot||_\sigma)$ where $\sigma \in \Lambda$ be two local operator spaces, we say $(E \otimes F,||.||_\delta)$ where $\delta \in \Omega$ with $\Omega\cong \Gamma \times \Lambda$ is local operator spaces tensor product $\tau$ if following conditions are satisfied
	\begin{enumerate}
		\item $(E \otimes F,||\cdot ||_\delta)$ is a local operator space.
		\item For any $\rho \in \Gamma$, $\sigma \in \Lambda$ there exists $\delta \in \Omega$ s.t. $||e\otimes f||_\delta^{nm}\leq ||e||_\rho^n||f||_\sigma^m$ and for any $\delta \in \Omega$ there exist $\rho \in \Gamma$ and $\sigma \in \Lambda$ s.t.  $||e\otimes f||_\delta^{nm}\leq ||e||_\rho^n||f||_\sigma^m$
		\item If $\phi:S \to M_n$ and $\psi:T\to M_m$ are local completely bounded maps w.r.t $\rho $ and $\sigma$ resp. then $\phi\otimes \psi: E \otimes F \to M_{nm}$ is a local completely bounded w.r.t. $(\rho,\sigma)$ and $||\phi \otimes \psi||_{(\rho,\sigma)}^{l.c.b}\leq ||\phi||_\rho^{l.c.b}||\psi||_\sigma^{l.c.b}.$
	\end{enumerate}
	
\end{definition}
Recall from Section \Ref{s:1} that every local operator system is also a local operator space whose matrix seminorms are determined by local matrix ordering. Thus it is important to understand the relationship between local operator system tensor products and local operator space tensor products. Let us first look at some elementary facts which are useful.
\begin{lemma}
	Let $P$, $A$ be operators on a quantized domain $D$ obtained by union of Hilbert spaces $H_\alpha$ with $P \in C_\alpha$. If $\begin{pmatrix}
	P & A\\
	A^* &P
	\end{pmatrix}\in C_\alpha^2$ then $A^*A\leq _\alpha ||P||_\alpha P$. In particular $||A||_\alpha \leq ||P||_\alpha$.
\end{lemma}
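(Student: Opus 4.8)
The plan is to reduce the statement to the classical fact that a positive $2\times 2$ operator matrix $\left(\begin{smallmatrix}P & A\\ A^* & P\end{smallmatrix}\right)$ on a Hilbert space satisfies $A^*A\leq \|P\|P$, applied with the ambient Hilbert space taken to be the single component $H_\alpha$, and then to translate the conclusion back to the quantized domain $D$.

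First I would unpack the hypothesis. By the definition of the matricial local cone, $\left(\begin{smallmatrix}P & A\\ A^* & P\end{smallmatrix}\right)\in C_\alpha^2$ means exactly that the compression $\left(\begin{smallmatrix}P|_{H_\alpha} & A|_{H_\alpha}\\ (A|_{H_\alpha})^* & P|_{H_\alpha}\end{smallmatrix}\right)$ is a positive operator in $B(H_\alpha\oplus H_\alpha)$; here one uses that each element of $C^*_\mathcal{E}(D)$ leaves $H_\alpha$ invariant, that $(T^*)|_{H_\alpha}=(T|_{H_\alpha})^*$, and that restriction to $H_\alpha$ is multiplicative, so in particular $(A^*A)|_{H_\alpha}=(A|_{H_\alpha})^*(A|_{H_\alpha})$ and $\|T\|_\alpha=\|T|_{H_\alpha}\|_{B(H_\alpha)}$. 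Writing $P_0=P|_{H_\alpha}$ and $A_0=A|_{H_\alpha}$, it thus suffices to prove $A_0^*A_0\leq\|P_0\|\,P_0$ in $B(H_\alpha)$.

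Next I would derive the key scalar inequality. Positivity of the $2\times 2$ matrix gives $\langle P_0\xi,\xi\rangle+2\,\mathrm{Re}\,\langle A_0\eta,\xi\rangle+\langle P_0\eta,\eta\rangle\geq 0$ for all $\xi,\eta\in H_\alpha$. Replacing $\xi$ by $e^{i\theta}s\,\xi$ and $\eta$ by $s^{-1}\eta$, optimizing over the phase $\theta$ and then over the scalar $s>0$ by the arithmetic--geometric mean inequality, yields the Cauchy--Schwarz-type estimate $\lvert\langle A_0\eta,\xi\rangle\rvert^2\leq\langle P_0\xi,\xi\rangle\,\langle P_0\eta,\eta\rangle$; the degenerate case $\langle P_0\xi,\xi\rangle=0$ is handled by first replacing $P_0$ with $P_0+\varepsilon I$ and letting $\varepsilon\to 0$. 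Fixing $\eta$ and taking the supremum over unit vectors $\xi$ gives $\|A_0\eta\|^2\leq\|P_0\|\,\langle P_0\eta,\eta\rangle$, i.e. $\langle A_0^*A_0\eta,\eta\rangle\leq\|P_0\|\,\langle P_0\eta,\eta\rangle$ for every $\eta$, which is precisely $A_0^*A_0\leq\|P_0\|\,P_0$. Translating back, this is $A^*A\leq_\alpha\|P\|_\alpha P$. Finally, taking operator norms and using $0\leq A_0^*A_0\leq\|P_0\|P_0$ together with $\|P_0\|=\|P\|_\alpha$ gives $\|A\|_\alpha^2=\|A_0^*A_0\|\leq\|P_0\|^2=\|P\|_\alpha^2$, hence $\|A\|_\alpha\leq\|P\|_\alpha$.

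The argument is essentially the textbook one once the setting is fixed, so there is no serious obstacle; the only point requiring care is the bookkeeping in the first step --- verifying that matricial local positivity at level $\alpha$ is genuinely compression-positivity on $H_\alpha\oplus H_\alpha$ and that restriction to $H_\alpha$ is a $*$-homomorphism on products --- so that the single-Hilbert-space proof can be run verbatim with $H_\alpha$ in place of $H$. The hypothesis $P\in C_\alpha$ is then automatic, being the $(1,1)$-compression of the positive matrix, and is recorded only for emphasis.
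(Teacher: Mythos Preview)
Your proof is correct and follows essentially the same approach as the paper: both reduce the statement to the single Hilbert space $H_\alpha$ by restriction and then invoke the classical fact that a positive $2\times 2$ operator block $\left(\begin{smallmatrix}P_0 & A_0\\ A_0^* & P_0\end{smallmatrix}\right)$ on a Hilbert space satisfies $A_0^*A_0\le\|P_0\|P_0$. The paper merely hints at a case split $P|_{H_\alpha}=0$ versus $P|_{H_\alpha}\neq 0$, whereas you supply the full Cauchy--Schwarz/optimization argument with an $\varepsilon$-perturbation to handle degeneracy; this is a stylistic difference, not a substantive one.
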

\begin{proof} This result can be proven easily by taking two cases $P|_{H_\alpha}=0$ and $P|_{H_\alpha}\neq 0$.
\end{proof}

\begin{theorem}\label{4}
	Let $S\subseteq \mathcal{A}$ be local operator system, and let $\mathcal{B}$ be a Pro-C* algebra, and let $\phi:S \to \mathcal{B}$ be local completely positive say w.r.t.($\alpha,\beta$) Then $\phi$ is local completely bounded and $||\phi(1)||_\beta=||\phi||_{(\alpha,\beta)}=||\phi||_{(\alpha,\beta)}^{l.c.b}$
\end{theorem}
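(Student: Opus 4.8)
The plan is to imitate the classical operator-system argument (Paulsen, \emph{Completely Bounded Maps and Operator Algebras}, Prop.~3.6) but carry it out one Hilbert space $H_\alpha$ at a time, using the preceding lemma as the workhorse. First I would fix the pair $(\alpha,\beta)$ witnessing the local complete positivity of $\phi$, so that $v|_{H_\alpha^n}\geq 0$ forces $\phi^{(n)}(v)|_{K_\beta^n}\geq 0$. The first reduction is to observe that for a local operator system $S\subseteq\mathcal A$ the Archimedean matrix order unit is the identity, and the family of $\ast$-seminorms from Proposition~\ref{seminorm} is exactly the one giving $S$ its local operator space structure; concretely, $\|X\|_\alpha^n\le r$ iff $\left(\begin{smallmatrix} re_n & X\\ X^* & re_n\end{smallmatrix}\right)\in\mathcal C_\alpha^{2n}$. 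Applying the unital local positive map $\phi^{(2n)}$ (which sends $\mathcal C_\alpha^{2n}$ into the corresponding cone for $\mathcal B$ at level $\beta$) to this $2n\times 2n$ positive matrix, and using $\phi(1)=\phi^{(n)}(e_n)$ componentwise, yields $\left(\begin{smallmatrix} r\,\phi(1)_n & \phi^{(n)}(X)\\ \phi^{(n)}(X)^* & r\,\phi(1)_n\end{smallmatrix}\right)\ge_\beta 0$. The preceding lemma then gives $\|\phi^{(n)}(X)\|_\beta \le r\,\|\phi(1)_n\|_\beta = r\,\|\phi(1)\|_\beta$ (the seminorm of a diagonal amplification equals that of the block), and taking the infimum over admissible $r$ gives $\|\phi^{(n)}(X)\|_\beta \le \|\phi(1)\|_\beta\,\|X\|_\alpha^n$. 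Since this holds for all $n$, $\phi$ is local completely bounded with $\|\phi\|^{l.c.b}_{(\alpha,\beta)}\le\|\phi(1)\|_\beta$.

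For the reverse inequalities the chain $\|\phi(1)\|_\beta\le\|\phi\|_{(\alpha,\beta)}\le\|\phi\|^{l.c.b}_{(\alpha,\beta)}$ is what remains. The middle inequality is immediate from the definition of the local bound as a supremum over the unit ball including the $n=1$ level. For $\|\phi(1)\|_\beta\le\|\phi\|_{(\alpha,\beta)}$ I would use that $1\in S$ is the order unit with $\|1\|_\alpha=1$ (here one should note $\|1\|_\alpha\le 1$ always, and $=1$ unless the seminorm degenerates on $H_\alpha$, in which case both sides are handled trivially as in the lemma's two cases), so $\|\phi(1)\|_\beta\le\|\phi\|_{(\alpha,\beta)}\|1\|_\alpha=\|\phi\|_{(\alpha,\beta)}$. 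Combining the two directions collapses the chain to equality.

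The step I expect to be the main obstacle is the bookkeeping around the quantifier structure of ``local complete positivity w.r.t.\ $(\alpha,\beta)$'': one must be careful that the \emph{same} $\alpha$ works simultaneously at every matrix level $n$ (this is built into the definition of local matrix positivity recalled in Section~\ref{s:1}, but it needs to be invoked explicitly), and that passing from $\phi$ to $\phi^{(2n)}$ does not change the witnessing index. A secondary technical point is justifying $\|\mathrm{diag}(\phi(1),\dots,\phi(1))\|_\beta=\|\phi(1)\|_\beta$ and, more subtly, that the infimum defining $\|X\|_\alpha^n$ is attained or at least approached by a sequence $r_k$ for which the block matrices stay in the (closed, by Proposition~\ref{seminorm}) cone $\mathcal C_\alpha^{2n}$, so that $\phi^{(2n)}$ really may be applied. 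Once these are pinned down the argument is the routine $2\times 2$-matrix trick, so I would spend the written proof mostly on the quantifier bookkeeping and cite the lemma for the norm estimate.
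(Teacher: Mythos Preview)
Your proposal is correct and follows essentially the same route as the paper: the trivial chain $\|\phi(1)\|_\beta\le\|\phi\|_{(\alpha,\beta)}\le\|\phi\|_{(\alpha,\beta)}^{l.c.b}$, then for the reverse direction take $A\in M_n(S)$ with $\|A\|_\alpha\le 1$, note that $\left(\begin{smallmatrix} I_n & A\\ A^* & I_n\end{smallmatrix}\right)\in\mathcal C_\alpha^{2n}$, apply $\phi^{(2n)}$, and invoke the preceding lemma to get $\|\phi^{(n)}(A)\|_\beta\le\|\phi(1)\|_\beta$. The paper's write-up is terser and omits the bookkeeping you flag (the fixed $(\alpha,\beta)$ working at all matrix levels, and $\|\phi(1)_n\|_\beta=\|\phi(1)\|_\beta$), but those are exactly the points one would fill in, so your expanded version is if anything more complete.
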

\begin{proof} 	We have that $||\phi(1)||_\beta \leq ||\phi||_{(\alpha,\beta)}\leq ||\phi||_{\alpha,\beta}^{l.c.b}$, so it is enough to prove $||\phi||_{(\alpha,\beta)}\leq ||\phi(1)||_\beta$.
	Let $A=(a_{ij})\in M_n(S)$ with $||A||_\alpha\leq 1$. As $\begin{pmatrix}
	I_n &A \\
	A^* &I_n
	\end{pmatrix}$ is local positive w.r.t. $\alpha$ cone, therefore 
	$\phi_{2n}\bigg(\begin{pmatrix}
	I_n &A\\
	A^*  & I_n
	\end{pmatrix}\bigg)=\begin{pmatrix}
	\phi_n(I_n) & \phi_n(A)\\
	\phi_n(A)^* & \phi_n(I_n)
	\end{pmatrix}$ is positive w.r.t $\beta$ cone. And by above lemma we are done.
	\end{proof}

	\begin{proposition}\label{6}
	Let S be a local operator system and $f:S \to \mathbb{C}$ be a local positive map then $f$ is local completely positive.
\end{proposition}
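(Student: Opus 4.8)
The plan is to imitate the classical fact that a positive linear functional on an operator system is completely positive (see \cite{paulsen}), replacing the single cone by the family $\{\mathcal{C}_\alpha^n\}$ and keeping track of the index. Unwinding the definitions, \emph{$f$ is local positive} means there is an $\alpha\in\Gamma$ with $f(\mathcal{C}_\alpha^1)\subseteq[0,\infty)$, and the goal is to show that, with this same $\alpha$, one has $f^{(n)}(\mathcal{C}_\alpha^n)\subseteq M_n^+$ for every $n\in\mathbb{N}$; this is precisely local complete positivity of $f$.

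First I would fix $n\in\mathbb{N}$, an element $A=[a_{ij}]\in\mathcal{C}_\alpha^n$, and an arbitrary vector $\lambda=(\lambda_1,\ldots,\lambda_n)^t\in\mathbb{C}^n$, which I regard as a matrix $X=\lambda\in M_{n,1}$. The second axiom of a local matrix ordering gives $X^*\mathcal{C}_\alpha^n X\subseteq\mathcal{C}_\alpha^1$, and a direct computation shows $X^*AX=\sum_{i,j=1}^n\bar{\lambda_i}\lambda_j\,a_{ij}\in\mathcal{C}_\alpha^1$. Applying $f$ and using $f(\mathcal{C}_\alpha^1)\subseteq[0,\infty)$ yields $\sum_{i,j=1}^n\bar{\lambda_i}\lambda_j\,f(a_{ij})\ge 0$. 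Since $\lambda$ was arbitrary, the scalar matrix $[f(a_{ij})]$ has nonnegative quadratic form on all of $\mathbb{C}^n$, hence is positive semidefinite; that is, $f^{(n)}(A)\in M_n^+$. (Note this already forces $[f(a_{ij})]=[f(a_{ij})]^*$, so no separate self-adjointness verification is needed, although it also follows from $A=A^*$ together with the fact that a local positive functional is $\ast$-preserving via the order unit.)

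As $n$ was arbitrary, $f$ is local completely positive with respect to the same $\alpha$, which completes the proof. The only point requiring care — and essentially the whole content — is that the compatibility axiom $X^*\mathcal{C}_\alpha^n X\subseteq\mathcal{C}_\alpha^m$ is indexed by a \emph{single} $\alpha$, so the witness $\alpha$ for local positivity of $f$ serves simultaneously at every matrix level; there is no genuine obstacle beyond this observation, and the argument does not even invoke the Archimedean matrix order unit.
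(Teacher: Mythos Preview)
Your proof is correct and is essentially identical to the paper's own argument: both fix the witness $\alpha$ for local positivity, take $(a_{ij})\in\mathcal{C}_\alpha^n$ and a vector $\lambda\in\mathbb{C}^n$, and use the compatibility axiom $X^*\mathcal{C}_\alpha^n X\subseteq\mathcal{C}_\alpha^1$ with $X=\lambda\in M_{n,1}$ to conclude that $\langle f^{(n)}(A)\lambda,\lambda\rangle=f\big(\sum_{i,j}\bar{\lambda_i}\lambda_j a_{ij}\big)\ge 0$. Your write-up is in fact cleaner than the paper's, which somewhat cryptically refers to the compressed element as ``the $(1,1)$ entry of an element in $\mathcal{C}_\alpha^n$'' rather than explicitly invoking the compatibility condition.
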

\begin{proof} 
	As $f$ is local positive so there exists $\alpha$ s.t. $f(C_\alpha) \geq 0$. Let $(a_{i,j})\in C_\alpha ^n$ and $x=(x_1,x_2,..,x_n)\in \mathbb{C}^n$.
	We have that $<f((a_{i,j}))x,x>=f(\sum_{i,j}a_{i,j}x_j\bar{x_i})$ and the summation that $f$ is being evaluated at is the $(1,1)$ entry of element in $C_\alpha^n$ if $(a_{i,j}) \in C_\alpha^n$. Thus $f$ is local completely positive. 
	\end{proof}

\begin{theorem}\label{krein}
	Let $S$ be a local operator system contained in a Pro C*-algebra $A$ and let $\phi:S \to \mathbb{C}$ be local positive map then $\phi$ can be extended to local positive map on $A$. 
\end{theorem}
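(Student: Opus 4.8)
The plan is to imitate the classical Arveson/Krein extension theorem for operator systems, adapted to the locally convex setting. First I would record the structural reductions: since $\phi\colon S\to\mathbb{C}$ is local positive, there is some fixed $\alpha\in\Gamma$ with $\phi(\mathcal{C}_\alpha)\ge 0$, where $\mathcal{C}_\alpha = S\cap (A$-local positive cone at level $\alpha)$; by Proposition \ref{6} $\phi$ is automatically local completely positive with respect to this same $\alpha$, and by Theorem \ref{4} applied to $\phi$ (viewing $\mathbb{C}$ as a Pro-C*-algebra) it is local completely bounded with $\|\phi\|_{(\alpha,\cdot)} = \|\phi(1)\|$. The key point of this first step is that all of the relevant data of $\phi$ lives at a single index $\alpha$, so the problem essentially reduces to a single C*-level extension: restrict attention to $A_\alpha := A/(\text{the kernel of }\pi_\alpha)$, an honest C*-algebra, with its canonical surjection $\pi_\alpha\colon A\to A_\alpha$, and note that $\phi$ factors as $\phi = \bar\phi\circ\pi_\alpha|_S$ for a positive functional $\bar\phi$ on the operator system $\pi_\alpha(S)\subseteq A_\alpha$.

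Next I would invoke the classical result: a positive linear functional on an operator system $\pi_\alpha(S)$ inside a C*-algebra $A_\alpha$ extends to a positive linear functional on $A_\alpha$. This is standard (it follows from the Hahn–Banach theorem together with the fact that for a functional on an operator system, positivity is equivalent to $\|\phi\| = \phi(1)$, cf. \cite{paulsen}); one applies it to $\bar\phi$ to obtain a positive functional $\tilde\psi\colon A_\alpha\to\mathbb{C}$ with $\tilde\psi|_{\pi_\alpha(S)} = \bar\phi$. Then I would pull back: define $\psi := \tilde\psi\circ\pi_\alpha\colon A\to\mathbb{C}$. By construction $\psi|_S = \tilde\psi\circ\pi_\alpha|_S = \bar\phi\circ\pi_\alpha|_S = \phi$, so $\psi$ is an extension of $\phi$. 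It remains to check $\psi$ is local positive: if $a\in A$ is local positive at level $\alpha$, i.e. $a|_{H_\alpha}\ge 0$, then $\pi_\alpha(a)\ge 0$ in $A_\alpha$, hence $\psi(a) = \tilde\psi(\pi_\alpha(a))\ge 0$. Thus $\psi$ is local positive (with witnessing index $\alpha$), completing the proof.

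The main obstacle I anticipate is not conceptual but bookkeeping: one must be careful about what "$\mathcal{C}_\alpha$" means for a local operator system $S$ sitting inside the Pro-C*-algebra $A$, and in particular to verify that $\pi_\alpha(S)$ is genuinely an operator system in the C*-algebra $A_\alpha$ (it is unital and self-adjoint since $S$ is, and $\pi_\alpha(1) = 1$) and that $\bar\phi$ is well-defined on it — this last needs that $\phi$ kills $S\cap\ker\pi_\alpha$, which is exactly the local-positivity condition $\phi(v)=0$ whenever $v|_{H_\alpha}=0$ built into the definition of local (matrix) positive maps in Section \ref{s:1}. A secondary subtlety is that $A$ being a Pro-C*-algebra means $A = \varprojlim A_\alpha$ with the $A_\alpha$ C*-algebras and $\pi_\alpha$ the canonical maps, so one should cite \cite{frago2005top} for the structure theory to justify that the quotient $A_\alpha$ really is a C*-algebra onto which local positivity at $\alpha$ descends to ordinary positivity. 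Once these identifications are in place, the extension itself is an immediate application of the classical operator-system Hahn–Banach/Krein theorem, and no new analysis is required.

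\begin{proof}
Since $\phi$ is local positive there is an index $\alpha\in\Gamma$ such that $\phi(v)\ge 0$ whenever $v$ is local positive at level $\alpha$ and $\phi(v)=0$ whenever $v|_{H_\alpha}=0$. Writing $A=\varprojlim A_\alpha$ with canonical surjections $\pi_\alpha\colon A\to A_\alpha$ onto the C*-algebras $A_\alpha$ (see \cite{frago2005top}), local positivity at level $\alpha$ of $a\in A$ is precisely positivity of $\pi_\alpha(a)$ in $A_\alpha$. The subspace $\pi_\alpha(S)\subseteq A_\alpha$ is a unital self-adjoint subspace, hence an operator system, and the condition $\phi(v)=0$ whenever $v|_{H_\alpha}=0$ says exactly that $\phi$ annihilates $S\cap\ker\pi_\alpha$; therefore $\phi$ factors as $\phi=\bar\phi\circ(\pi_\alpha|_S)$ for a well-defined linear functional $\bar\phi$ on $\pi_\alpha(S)$, and $\bar\phi$ is positive because $\phi$ is positive on the level-$\alpha$ cone. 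By the classical Krein extension theorem for positive functionals on operator systems (cf. \cite{paulsen}), $\bar\phi$ extends to a positive linear functional $\tilde\psi$ on $A_\alpha$. Set $\psi:=\tilde\psi\circ\pi_\alpha\colon A\to\mathbb{C}$. Then $\psi|_S=\tilde\psi\circ\pi_\alpha|_S=\bar\phi\circ\pi_\alpha|_S=\phi$, and if $a\in A$ is local positive at level $\alpha$ then $\pi_\alpha(a)\ge 0$ in $A_\alpha$, so $\psi(a)=\tilde\psi(\pi_\alpha(a))\ge 0$. Hence $\psi$ is a local positive extension of $\phi$ to $A$.
\end{proof}
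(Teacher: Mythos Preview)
Your proof is correct, and at bottom it is the same Hahn--Banach argument as the paper's, but you package it differently. The paper stays on $A$ throughout: from Proposition~\ref{6} and Theorem~\ref{4} it gets $\|\phi\|_\alpha=\phi(1)$, applies the seminorm Hahn--Banach theorem directly to extend $\phi$ to $\tilde\Phi:A\to\mathbb{C}$ with $\|\tilde\Phi\|_\alpha=\tilde\Phi(1)$, and then invokes a result from~\cite{5} (a local analogue of ``norm attained at $1$ implies positive'') to conclude $\tilde\Phi$ is local positive. You instead make the reduction to a single $C^*$-level explicit by factoring through $\pi_\alpha:A\to A_\alpha$, apply the classical Krein extension on the genuine operator system $\pi_\alpha(S)\subseteq A_\alpha$, and pull back. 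Your route has the advantage of isolating the one piece of bookkeeping---that the vanishing condition in the definition of local positivity guarantees $\phi$ descends to $\pi_\alpha(S)$---and then appealing to an off-the-shelf theorem; the paper's route avoids the quotient machinery but needs the cited characterisation of local positivity. Either way the content is the same Hahn--Banach step, so there is no substantive gap.
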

\begin{proof} 
	As we know from Proposition \Ref{6} that every local positive linear functional is local completely positive say w.r.t. $\alpha$, then by Theorem \Ref{4} $||\phi||_\alpha=|\phi(1)|=\phi(1)$ . By Hahn-Banach theorem $\phi$ has an extension $\tilde{\Phi}:A \to \mathbb{C}$ such that $||\tilde{\Phi}||_\alpha=\phi(1)=\tilde{\Phi}(1)$. By Corollary 4.1 in \cite{5} we have $\tilde{\Phi}$ has the required property.
	\end{proof}

Let $V$ be a local operator space and if $A \in M_n$ then $A_{(i,j)}$ denote the $(i,j) $th entry of $A$. If $\phi:V \to M_n$ is a linear map then we can associate a linear functional $s_\phi$ on $M_n(V)$ to $\phi$ such that $s_\phi((a_{i,j}))=\frac{1}{n}\sum_{i,j}\phi(a_{i,j})_{(i,j)}$. If $V$ contains the unit and $\phi(1)=1$ then $s_\phi(1)=1$. Also if $s:M_n(V)\to \mathbb{C}$, then we define $\phi_s:V \to M_n$ by $(\phi_s(a)_{(i,j)})=n.s(a \otimes E_{i,j})$ where $E_{i,j}$ is canonical marix with all entries zero except one at $(i,j)$-th place and $a \otimes E_{i,j}$ is in $M_n(V)$ with all entries zero except $a$ at the $(i,j)$ th place.
\begin{theorem}\label{3.2}
	Let $A$ be a Pro-$C^*$algebra with unit 1, let $S$ be a local operator system in $A$ and let $\phi: S \to M_n.$ The following are equivalent:
	\begin{enumerate}
		\item $\phi$ is local completely positive,
		\item $\phi$ is $n$-local positive,
		\item $s_\phi$ is local positive.
	\end{enumerate} 
\end{theorem}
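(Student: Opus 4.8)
The plan is to prove the cycle of implications $(1)\Rightarrow(2)\Rightarrow(3)\Rightarrow(1)$, following the classical pattern (cf. \cite{paulsen}) while keeping a single witnessing index $\alpha\in\Gamma$ throughout. The key structural simplification is that the target algebras $M_n$ and $\mathbb{C}$ carry a unique matricial cone at every level, so no second index on the range side ever enters and all the local data is controlled by $\alpha$. The implication $(1)\Rightarrow(2)$ is immediate, since local complete positivity with respect to $\alpha$ in particular gives $n$-local positivity with respect to the same $\alpha$.

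For $(2)\Rightarrow(3)$ I would argue as follows. Assume $\phi$ is $n$-local positive with witnessing index $\alpha$, so $\phi_n(A)=(\phi(a_{ij}))\in M_n(M_n)^+$ for every $A=(a_{ij})\in\mathcal{C}_\alpha^n$. Testing against the vector $\xi=\sum_{k=1}^n e_k\otimes e_k\in\mathbb{C}^n\otimes\mathbb{C}^n$ gives, after a short computation, $\langle\phi_n(A)\xi,\xi\rangle=\sum_{i,j=1}^n\phi(a_{ij})_{(i,j)}=n\,s_\phi(A)\ge 0$. Hence $s_\phi(\mathcal{C}_\alpha^n)\geq 0$, i.e. $s_\phi$ is local positive with the same index $\alpha$ (recalling that $M_n(S)$, viewed as a local operator system, has $\mathcal{C}_\alpha^n$ as its cone at the first matrix level).

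For $(3)\Rightarrow(1)$, suppose $s_\phi$ is local positive with respect to $\alpha$. Fix $m$ and $P=(p_{kl})\in\mathcal{C}_\alpha^m\subseteq M_m(S)$; we must show $\phi_m(P)=(\phi(p_{kl}))\in M_m(M_n)^+$. Identifying $M_m(M_n)$ with $M_{mn}$ and writing a vector $\eta=(\eta_{k,i})\in\mathbb{C}^{mn}$ as a matrix $X\in M_{m,n}$ with $X_{k,i}=\eta_{k,i}$, the defining identity $\phi(p_{kl})_{(i,j)}=n\,s_\phi(p_{kl}\otimes E_{ij})$ yields $\langle\phi_m(P)\eta,\eta\rangle=n\,s_\phi(X^*PX)$. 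By condition (2) of a local matrix ordering, $X^*PX\in\mathcal{C}_\alpha^n$, so $s_\phi(X^*PX)\geq 0$; since $\eta$ was arbitrary, $\phi_m(P)\geq 0$. As this holds for every $m$ with the fixed $\alpha$, $\phi$ is local completely positive. (Alternatively, one may first invoke Proposition \ref{6} to upgrade the local positive functional $s_\phi$ on $M_n(S)$ to a local completely positive one and then transport along the inverse correspondence $s\mapsto\phi_s$.)

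The routine but slightly delicate points are: verifying that the reshuffling $\eta\leftrightarrow X$ and the identity $\langle\phi_m(P)\eta,\eta\rangle=n\,s_\phi(X^*PX)$ are exactly the classical ones with no local corrections — which holds precisely because the ambient finite matrix algebras have no nontrivial local structure — and confirming that the single index $\alpha$ produced at each stage simultaneously witnesses the required positivity at all matrix levels. This last point is the crux: it is what makes $(2)\Rightarrow(3)\Rightarrow(1)$ genuinely bootstrap mere $n$-local positivity up to full local complete positivity, so I would treat it as the main obstacle and state it carefully.
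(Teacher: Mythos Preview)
Your proposal is correct and takes essentially the same approach as the paper: the paper's own proof is the one-line remark that the argument follows \cite[Theorem~6.1]{paulsen}, and what you have written is precisely that classical $(1)\Rightarrow(2)\Rightarrow(3)\Rightarrow(1)$ computation carried through with a single witnessing index $\alpha$. Your explicit bookkeeping of $\alpha$ and your observation that the target $M_n$ introduces no second local index are exactly the (only) adaptations needed, so nothing further is required.
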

\begin{proof}  Proof follows on the lines of \cite[Theorem 6.1]{paulsen}.
\end{proof}

\begin{theorem}\label{1.3}
	Let $A$ be a Pro $C^*$-algebra with unit and $S$ be a local operator system contained in $A$, and $\phi:S \to M_n$ local completely positive map. Then there exists a local completely positive map $\psi:A \to M_n$ which extends $\phi$.\
\end{theorem}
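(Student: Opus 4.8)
The plan is to adapt the classical Arveson/Stinespring-type extension argument for matrix-valued completely positive maps (see \cite[Theorem 6.1 and Theorem 7.5]{paulsen}) to the local setting, using the dictionary between local completely positive maps into $M_n$ and local positive linear functionals on $M_n(S)$ that has just been established in Theorem \ref{3.2}. First I would fix the data: $\phi\colon S\to M_n$ is local completely positive, say with respect to a pair $(\alpha,\beta)$ in the sense of the definition in Section \ref{s:1} (since the target $M_n$ carries its unique operator system / $C^*$ cone structure, only the source index $\alpha$ is relevant). By Theorem \ref{3.2}, the associated functional $s_\phi\colon M_n(S)\to\C$ given by $s_\phi((a_{ij}))=\frac1n\sum_{i,j}\phi(a_{ij})_{(i,j)}$ is local positive, say with respect to the cone $\mathcal C_\alpha^n$ on $M_n(S)$. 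Observe that $M_n(S)$ is itself a local operator system sitting inside the Pro $C^*$-algebra $M_n(A)$ (with the family of $C^*$-seminorms inherited from $A$), so the hypotheses of Theorem \ref{krein} apply.

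Next I would invoke Theorem \ref{krein}: the local positive functional $s_\phi$ on the local operator system $M_n(S)\subseteq M_n(A)$ extends to a local positive functional $\tilde s\colon M_n(A)\to\C$, still local positive with respect to the $\alpha$-cone (the Krein-type extension in Theorem \ref{krein} does not enlarge the seminorm index). Now I would reverse the correspondence of Theorem \ref{3.2}: define $\psi\colon A\to M_n$ by $(\psi(a))_{(i,j)}=n\cdot\tilde s(a\otimes E_{ij})$. Because $\tilde s$ restricts to $s_\phi$ on $M_n(S)$, one checks on elementary tensors $a\otimes E_{ij}$ that $\psi|_S=\phi$. It remains to see that $\psi$ is local completely positive: this is precisely the implication (3)$\Rightarrow$(1) of Theorem \ref{3.2} applied now with $S$ replaced by the whole Pro $C^*$-algebra $A$ (a Pro $C^*$-algebra is in particular a local operator system containing its unit), since $s_\psi=\tilde s$ is local positive by construction. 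Finally, $\psi$ is unital: $\psi(1)$ has $(i,j)$ entry $n\,\tilde s(1\otimes E_{ij})=n\,s_\phi(1\otimes E_{ij})=\phi(1)_{(i,j)}=I_{(i,j)}$.

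The two points that require genuine care — and where I expect the main obstacle to lie — are the bookkeeping of the seminorm indices and the verification that $M_n(S)\subseteq M_n(A)$ is a bona fide local operator system to which Theorem \ref{krein} applies. Specifically, one must be sure that "local completely positive for $\phi$" unwinds to "$s_\phi$ local positive with respect to a single cone $\mathcal C_\alpha^n$ on $M_n(S)$" uniformly, rather than requiring a different $\alpha$ at each matrix level; this is exactly the content of Theorem \ref{3.2}(3) and must be quoted carefully. Likewise, the Krein extension in Theorem \ref{krein} is stated for functionals on a local operator system inside a Pro $C^*$-algebra, so I must identify $M_n(A)$ as a Pro $C^*$-algebra (its $C^*$-seminorms are $p_\gamma^{(n)}$ on $M_n(A)$) and $M_n(S)$ as a local operator subsystem — both routine but needing an explicit sentence. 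Modulo this index-tracking, the proof is a formal transport of the Arveson extension theorem through the functional-map correspondence, so I would write it compactly.

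\begin{proof}
Since the target $M_n$ is a (local) operator system with its unique $C^*$-cone structure, local complete positivity of $\phi$ means there is an index $\alpha$ such that $\phi^{(k)}(\mathcal C_\alpha^k)\subseteq M_k(M_n)^+$ for all $k$; equivalently, by Theorem \ref{3.2}, the functional $s_\phi\colon M_n(S)\to\C$, $s_\phi((a_{ij}))=\tfrac1n\sum_{i,j}\phi(a_{ij})_{(i,j)}$, is local positive with respect to the cone $\mathcal C_\alpha^n$ of $M_n(S)$. Now $M_n(A)$ is a Pro $C^*$-algebra with seminorms $p_\gamma^{(n)}$, and $M_n(S)$ is a local operator system contained in it; hence Theorem \ref{krein} applies and yields a local positive extension $\tilde s\colon M_n(A)\to\C$ of $s_\phi$, local positive with respect to the $\alpha$-cone. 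Define $\psi\colon A\to M_n$ by $(\psi(a))_{(i,j)}=n\,\tilde s(a\otimes E_{ij})$. For $a\in S$ we have $\tilde s(a\otimes E_{ij})=s_\phi(a\otimes E_{ij})=\tfrac1n\phi(a)_{(i,j)}$, so $\psi|_S=\phi$. Moreover $s_\psi=\tilde s$ is local positive on $M_n(A)$, so by the implication $(3)\Rightarrow(1)$ of Theorem \ref{3.2} (with $A$ in place of $S$, which is legitimate since a Pro $C^*$-algebra is a local operator system containing its unit) the map $\psi$ is local completely positive. Finally $(\psi(1))_{(i,j)}=n\,\tilde s(1\otimes E_{ij})=n\,s_\phi(1\otimes E_{ij})=\phi(1)_{(i,j)}$, so $\psi$ is unital. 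This $\psi$ is the required extension.
\end{proof}
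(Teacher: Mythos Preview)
Your argument is correct and follows essentially the same route as the paper: pass from $\phi$ to the functional $s_\phi$ via Theorem \ref{3.2}, extend $s_\phi$ to $M_n(A)$ by the Krein-type Theorem \ref{krein}, and then pass back to a map $\psi\colon A\to M_n$ using Theorem \ref{3.2} again; your index bookkeeping and identification of $M_n(S)\subseteq M_n(A)$ as a local operator system inside a Pro $C^*$-algebra are the extra details the paper leaves implicit. One small slip: the theorem does not assume $\phi$ unital, so from $\psi(1)=\phi(1)$ you cannot conclude ``$\psi$ is unital'' --- that final sentence should simply be dropped (it is not needed for the statement anyway).
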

\begin{proof} 
	Let $s_\phi$ be the local positive linear functional on $M_n(S)$ associated with $\phi$ and let $s$ be the local positive linear functional on $M_n(A)$ which extends $s_\phi$ by Theorem\Ref{krein}. By Theorem \Ref{3.2}, we have that the map $\psi$ associated with $s$ is local completely positive. Clearly $\psi$ is the extension of $\phi$ as $s$ is the extenion of $s_\phi$.
\end{proof}

\begin{lemma}\label{1}
	Let $A$ and $B$ be Pro-$C^*$algebras with unit 1, let $V$ be a local operator space in $A$ and let $\phi:V \to B$. Define a local operator system $S^l_V\subseteq M_2(A)$ by $S^l_V=\bigg\{\begin{pmatrix} \lambda 1 & v  \\
	w^* &\mu 1\end{pmatrix}: \lambda,\mu\in \mathbb{C},v,w\in V\bigg\}$ and $\Phi:S^l_V\to M_2(B)$ via $\Phi\begin{pmatrix} \lambda 1 & v \\ w^* & \mu 1\end{pmatrix}=\begin{pmatrix}
	\lambda 1& \phi(v)\\
	\phi(w)^* & \mu 1
	\end{pmatrix}.$ If $\phi $ is local completely contractive then $\Phi$ is local completely positive.
\end{lemma}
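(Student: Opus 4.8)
The plan is to mimic the classical operator‑space argument (Paulsen, Lemma 8.1) that turns a complete contraction into a complete positive map on the $2\times2$ dilation, adapting it to the local setting where everything is indexed by the defining families of seminorms/cones. First I would fix the relevant indices: since $\phi$ is local completely contractive, for each $\sigma$ in the family indexing $B$ there is a $\rho$ in the family indexing $V$ and a constant that can be taken to be $1$ such that $\|\pi_\sigma^{(n)}\phi^{(n)}(x)\|\le\|\pi_\rho^{(n)}(x)\|$ for all $n$ and all $x\in M_n(V)$. The local operator system $S^l_V\subseteq M_2(A)$ carries the cone structure inherited from $M_2(A)$, so its positive cone at level $\alpha$ (the index on $A$ compatible with $\rho$) consists, at the $n$‑th matrix level, of those $\begin{pmatrix}\lambda 1&v\\ w^*&\mu 1\end{pmatrix}$‑patterned matrices whose restriction to $H_\alpha$ is positive in $B(H_\alpha^{(2n)})$.

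Second, I would reduce local positivity of $\Phi$ to a pointwise statement: an element $T$ of $M_n(S^l_V)\subseteq M_{2n}(A)$ is local positive at level $\alpha$ iff $T|_{H_\alpha}\ge 0$, and a standard $2\times2$ operator‑matrix fact says $\begin{pmatrix}P&X\\ X^*&Q\end{pmatrix}\ge 0$ with $P,Q\ge 0$ forces (after the usual scaling and a limiting/Archimedean argument) $\|X\|\le\|P\|^{1/2}\|Q\|^{1/2}$; conversely the norm bound on the off‑diagonal, together with positivity of the diagonal, gives positivity of the whole $2\times2$ block. This is essentially the lemma stated just before Theorem~\ref{4} applied at the Hilbert space $H_\alpha$. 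So, given $T=\begin{pmatrix}\Lambda&V_0\\ W_0^*&M\end{pmatrix}\in M_n(S^l_V)$ local positive w.r.t. $\alpha$, one reads off that $\Lambda,M$ are local positive and the "off‑diagonal" $V_0$ (resp. $W_0$) is dominated in local norm by the geometric mean of $\Lambda|_{H_\alpha}$ and $M|_{H_\alpha}$.

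Third, I apply $\Phi$: by construction $\Phi^{(n)}(T)=\begin{pmatrix}\Lambda&\phi^{(n)}(V_0)\\ \phi^{(n)}(W_0)^*&M\end{pmatrix}$, the diagonal is unchanged hence still local positive (at the image index $\beta$ obtained from $\alpha$ via the $2\times2$ amplification of the local‑complete‑contractivity inequality, using that $\sigma\mapsto\rho$ and then compatibility with $\alpha,\beta$), and local complete contractivity of $\phi$ bounds $\|\pi_\beta^{(n)}\phi^{(n)}(V_0)\|$ by $\|\pi_\rho^{(n)}(V_0)\|\le$ the same geometric mean. Feeding this back into the converse direction of the $2\times2$ positivity criterion on $H_\beta$ shows $\Phi^{(n)}(T)$ is local positive w.r.t. $\beta$. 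Since $n$ was arbitrary, $\Phi$ is local completely positive.

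The main obstacle I anticipate is purely bookkeeping rather than conceptual: one must be careful that the same index $\alpha$ works simultaneously for "diagonal positive" and "off‑diagonal norm‑controlled", and that passing through $\Phi$ produces a single target index $\beta$ that handles the amplified $2\times2$ system — this requires invoking the filtered/upward‑directed nature of the families and the precise quantifier structure in the definition of local complete contractivity. The only genuinely analytic point is the $2\times2$ positivity $\Leftrightarrow$ off‑diagonal‑norm‑bound equivalence at each fixed $H_\alpha$, but on that finite‑level Hilbert space it is the ordinary $C^*$‑algebra fact, already recorded in the lemma preceding Theorem~\ref{4}; the Archimedeanization/closure results (Proposition~\ref{closure}) handle the passage from strict to non‑strict inequalities.
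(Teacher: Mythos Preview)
Your overall strategy---unwind local positivity at a fixed level $H_\alpha$, push the off-diagonal through $\phi$ using the contractive bound, and reassemble---is the right shape, but the step you call ``the converse direction of the $2\times2$ positivity criterion'' is false as you state it. For general $P,Q\ge 0$, the condition $\|X\|\le\|P\|^{1/2}\|Q\|^{1/2}$ does \emph{not} imply $\begin{pmatrix}P&X\\ X^*&Q\end{pmatrix}\ge0$; take for instance $P=\begin{pmatrix}1&0\\0&0\end{pmatrix}$, $Q=\begin{pmatrix}0&0\\0&1\end{pmatrix}$, $X=\begin{pmatrix}0&1\\0&0\end{pmatrix}$. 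The correct equivalence (for invertible $P,Q$) is $\|P^{-1/2}XQ^{-1/2}\|\le1$, and this is a strictly stronger statement than your norm bound. The lemma preceding Theorem~\ref{4} only records the forward implication, so you cannot cite it for the converse.

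What saves the situation---and what your sketch omits---is that in $M_n(S^l_V)$ the diagonal blocks have the special form $L\otimes 1_A$ with $L\in M_n(\mathbb{C})$. After perturbing by $\varepsilon I$ to make $L$ invertible, conjugation by $(L^{-1/2}\otimes 1)$ stays inside $M_n(V)$ because it is a scalar-matrix operation; this lets you reduce to diagonal equal to the identity, where positivity \emph{is} equivalent to the off-diagonal being a contraction, apply the local complete contractivity of $\phi$, and then conjugate back. Without exploiting this scalar-diagonal structure your step~3 does not go through.

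For comparison, the paper's proof avoids redoing this argument entirely: it notes that local complete contractivity of $\phi$ means that for each target index $\beta$ there is a source index $\alpha$ with the induced map $\phi_{(\alpha,\beta)}:V_\alpha\to B_\beta$ completely contractive in the classical sense, and then invokes Paulsen's Lemma~8.1 directly at that level to get $\Phi_{(\alpha,\beta)}:S_{V_\alpha}\to M_2(B_\beta)$ completely positive, which is exactly local complete positivity of $\Phi$. This ``pass to quotients and quote the classical lemma'' route is shorter and sidesteps the bookkeeping you flagged.
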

\begin{proof} 
	As $\phi$ is local completely contractive there exist $(\alpha,\beta)$ such that $\phi_{(\alpha,\beta)}:V_\alpha \to B_\beta$ is completely contractive where $V_\alpha$ is operator space and $B_\beta$ is C*-algebra then by lemma 8.1 in \cite{paulsen} we have that $\Phi_{(\alpha,\beta)}: S_{V_\alpha} \to M_2(B_\beta)$ is completely positive so we have $\phi$ is local completely positive map.
	\end{proof}

 \begin{theorem}\label{1.6}
	Let $A$ be a Pro $C^*$-algebra with unit and $\phi:A \to C_\mathcal{E}^*(D)$ be local completely bounded map. Then there exist local completely positive maps $\phi_i:A \to  C_\mathcal{E}^*(D)$ such that the map $\Phi:M_2(A)\to C_{\mathcal{E}^2}^*(D^2)$ given by $\Phi\begin{pmatrix}
	a &b\\c&d
	\end{pmatrix}=\begin{pmatrix} \phi_1(a) & \phi(b)\\
	\phi^*(c) & \phi_2(d)\end{pmatrix}$ is local completely positive.
\end{theorem}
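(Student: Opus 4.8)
The plan is to reduce the statement to its classical counterpart for $C^*$-algebras and operator systems at each finite stage, and then reassemble via the projective limit. Since $\phi : A \to C_\mathcal{E}^*(D)$ is local completely bounded, there exist indices $\rho \in \Gamma$ and $\sigma \in \Lambda$ (where $A = \varprojlim A_\rho$ and $C_\mathcal{E}^*(D) = \varprojlim C_{\mathcal E}^*(D)_\sigma$) together with $K > 0$ so that the induced map $\phi_{(\rho,\sigma)} : A_\rho \to C_{\mathcal E}^*(D)_\sigma$ is completely bounded. By rescaling (absorbing $K$ into the norm, exactly as in the passage from Theorem \ref{4} to the Wittstock-type decomposition) we may assume $\phi_{(\rho,\sigma)}$ is completely contractive, hence by the classical structure theorem (Paulsen, \cite{paulsen}, Lemma 8.1 and its consequences) there are completely positive maps $\phi_1^{(\sigma)}, \phi_2^{(\sigma)} : A_\rho \to C_{\mathcal E}^*(D)_\sigma$ with the block matrix
\[
\Phi_{(\rho,\sigma)}\begin{pmatrix} a & b \\ c & d\end{pmatrix} = \begin{pmatrix} \phi_1^{(\sigma)}(a) & \phi_{(\rho,\sigma)}(b) \\ \phi_{(\rho,\sigma)}^*(c) & \phi_2^{(\sigma)}(d)\end{pmatrix}
\]
completely positive from $M_2(A_\rho)$ into $M_2\big(C_{\mathcal E}^*(D)_\sigma\big) \cong C_{\mathcal E^2}^*(D^2)_\sigma$.

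Next I would address compatibility across the projective system. The subtlety is that the decomposition $\phi_{(\rho,\sigma)} = $ (corner of a completely positive block map) is not canonical, so a priori the maps $\phi_i^{(\sigma)}$ obtained for different $\sigma$ need not satisfy $\pi_{\sigma'\sigma} \circ \phi_i^{(\sigma)} = \phi_i^{(\sigma')} \circ (\text{appropriate connecting map})$. To fix this I would instead run the construction at the level of the associated linear functionals: pass to $s_\phi$ on $M_2(A)$ via Theorem \ref{3.2}, extend it to a local positive functional on $M_2(A) \otimes (\text{the relevant matrix amplification})$ using Theorem \ref{krein}, and then read off the block-diagonal corners of the resulting local completely positive map using Lemma \ref{1} (applied to the operator system $S^l_V$ with $V$ the span of the off-diagonal entries). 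The functional-level construction is automatically compatible with the connecting $*$-homomorphisms because Theorem \ref{krein}'s extension (via Hahn-Banach and \cite{5}, Corollary 4.1) is norm-preserving and therefore descends consistently to each quotient; this yields well-defined maps $\phi_i : A \to C_\mathcal{E}^*(D)$ as projective limits of $\phi_i^{(\sigma)}$.

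Finally I would verify that the assembled block map $\Phi : M_2(A) \to C_{\mathcal E^2}^*(D^2)$ is local completely positive: for each $\sigma$ choose the $\rho$ furnished above, note $\Phi_{(\rho,\sigma)}$ is completely positive by the finite-stage argument, and invoke the description of cones in a projective limit from Section \ref{projective} (a block matrix lies in $\mathcal{T}_{(\cdot,\sigma)}^n$ iff its image in $M_n(C_{\mathcal E^2}^*(D^2)_\sigma)$ is positive) to conclude local complete positivity of $\Phi$. The main obstacle I anticipate is precisely the coherence issue in the middle paragraph: ensuring the individually-chosen completely positive summands $\phi_1^{(\sigma)}, \phi_2^{(\sigma)}$ can be chosen simultaneously compatibly so that their limit exists; routing everything through the norm-preserving functional extension of Theorem \ref{krein}, rather than through ad hoc stagewise Wittstock decompositions, is what makes this go through.
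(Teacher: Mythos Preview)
The paper's argument is much more direct: it runs Paulsen's Theorem~8.3 entirely inside the local category rather than stage by stage. One normalizes $\phi$ to be local completely contractive, applies Lemma~\ref{1} to see that the block map $\Phi$ is already local completely positive on the local operator system $S^l_A \subseteq M_2(A)$, and then extends $\Phi$ from $S^l_A$ to all of $M_2(A)$ by a single local Arveson-type extension (Theorem~\ref{1.3} and its ambient generalization, following the pattern of \cite[Theorem~8.3]{paulsen}). The diagonal corners of this one extension are the maps $\phi_1,\phi_2$. Because the extension is performed once at the local level, no projective-limit coherence issue ever arises.

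Your stage-by-stage route does face exactly the coherence obstruction you flag, and the fix you sketch does not close it. First, the passage to a functional via Theorem~\ref{3.2} is only available for maps into $M_n$; your map $\phi$ lands in $C_\mathcal{E}^*(D)$ (and at each stage in $B(H_\sigma)$, which is still not a matrix algebra), so there is no single functional $s_\phi$ on $M_2(A)$ encoding $\phi$. Second, even granting some workaround for that, the assertion that the extension in Theorem~\ref{krein} ``is norm-preserving and therefore descends consistently to each quotient'' is unsupported: Hahn-Banach extensions are not unique, and matching the norm does not pin the extension down enough to force compatibility with the connecting maps $f_{\alpha\beta}$. What you actually need is a \emph{coherent} family of extensions, and the clean way to obtain one is to do a single extension in the local category---which is precisely the paper's strategy via Lemma~\ref{1} and Theorem~\ref{1.3}.
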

\begin{proof}  Using the Lemma \ref{1}, Theorem \ref{1.3} and \cite[Theorem 8.3]{paulsen} proof follows easily.
\end{proof}

\begin{proposition}\label{1.8}
	Let $S$ and $T$ be local operator systems and let $\phi_{i,j}:S \to T$, $1 \leq i,j \leq n$ be linear maps. Define $\Phi: S \to M_n(T)$ by $\Phi(s)=(\phi_{i,j}(s))$ and $\tilde{\Phi}:M_n(S)\to M_n(T)$ by $\tilde{\Phi}(s_{i,j})=(\phi_{i,j}(s_{i,j}))$. If $\tilde{\Phi}$ is local completely positive then $\Phi$ is local completely positive.
\end{proposition}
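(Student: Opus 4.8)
The plan is to factor $\Phi$ through $\tilde\Phi$ via a manifestly local completely positive ampliation. Set $\xi=(1,1,\dots,1)^{t}\in\mathbb{C}^{n}$, so that $E_{n}:=\xi\xi^{*}\in M_{n}(\mathbb{C})^{+}$ is the all-ones matrix. The first step is a bookkeeping identity: for every $m\in\mathbb{N}$ and every $A=(s_{kl})\in M_{m}(S)$,
$$\Phi^{(m)}(A)=\tilde\Phi^{(m)}\!\big((I_{m}\otimes\xi)\,A\,(I_{m}\otimes\xi)^{*}\big),$$
where $I_{m}\otimes\xi\in M_{mn,m}(\mathbb{C})$ is block-diagonal with $\xi$ on the diagonal. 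This is verified blockwise: the $(k,l)$ block of $(I_{m}\otimes\xi)A(I_{m}\otimes\xi)^{*}$ is $\xi\,s_{kl}\,\xi^{*}=s_{kl}E_{n}$, i.e. the $n\times n$ matrix all of whose entries equal $s_{kl}$, and $\tilde\Phi$ sends this constant matrix to $(\phi_{ij}(s_{kl}))_{ij}=\Phi(s_{kl})$. Specializing to $m=1$ gives $\Phi(s)=\tilde\Phi(E_{n}\otimes s)$.

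Next I would invoke the compatibility axiom of the local matrix ordering on $S$. Taking $X=(I_{m}\otimes\xi)^{*}=I_{m}\otimes\xi^{*}\in M_{m,mn}(\mathbb{C})$, item (2) in the definition of a local matrix ordering yields $X^{*}\mathcal{C}_{\beta}^{m}X\subseteq\mathcal{C}_{\beta}^{mn}$ for every index $\beta$ and every $m$; equivalently, $A\in\mathcal{C}_{\beta}^{m}$ implies $(I_{m}\otimes\xi)A(I_{m}\otimes\xi)^{*}\in\mathcal{C}_{\beta}^{mn}$, with the \emph{same} index $\beta$. In other words, the ampliation $\iota:s\mapsto E_{n}\otimes s$ is local completely positive from $S$ into $M_{n}(S)$, where $M_{n}(S)$ carries its usual local operator system structure whose $m$-th matrix-level cone with index $\beta$ is $\mathcal{C}_{\beta}^{mn}$ under the identification $M_{m}(M_{n}(S))\cong M_{mn}(S)$.

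Finally I would combine the two steps. Fix any cone index $\gamma$ for $M_{n}(T)$, so that its $m$-th matrix-level cone is $\mathcal{D}_{\gamma}^{mn}$ under $M_{m}(M_{n}(T))\cong M_{mn}(T)$. Since $\tilde\Phi:M_{n}(S)\to M_{n}(T)$ is local completely positive, there is an index $\beta$ with $\tilde\Phi^{(m)}(\mathcal{C}_{\beta}^{mn})\subseteq\mathcal{D}_{\gamma}^{mn}$ for all $m$. Composing with the identity of Step 1 and the inclusion of Step 2 gives $\Phi^{(m)}(\mathcal{C}_{\beta}^{m})\subseteq\mathcal{D}_{\gamma}^{mn}$ for all $m$, which is precisely the statement that $\Phi:S\to M_{n}(T)$ is local completely positive. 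Should the definition of local complete positivity also demand that local-null elements go to local-null elements, this follows verbatim, since both $\iota$ and the congruence $X^{*}(\cdot)X$ are linear and carry $\mathcal{C}_{\beta}^{m}\cap-\mathcal{C}_{\beta}^{m}$ into $\mathcal{C}_{\beta}^{mn}\cap-\mathcal{C}_{\beta}^{mn}$.

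The statement carries no genuine analytic content — it is the transcription of the operator-system lemma (cf. \cite{paulsen}) to the projective-limit setting — so the only thing needing care is the index bookkeeping in the last step: matching the chosen cone index $\gamma$ on $M_{n}(T)$ with an index $\beta$ on $S$, and keeping straight that passing to $M_{n}(\cdot)$ leaves the cone index set unchanged while shifting the matrix level from $m$ to $mn$. The blockwise verification in Step 1 is routine.
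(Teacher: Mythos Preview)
Your argument is correct and is essentially the same as the paper's: the paper defines $\delta:S\to M_n(S)$ by $\delta(s)=(s_{ij})$ with $s_{ij}=s$ for all $i,j$ (your $\iota:s\mapsto E_n\otimes s$), asserts that $\delta$ is local completely positive, and observes $\Phi=\tilde\Phi\circ\delta$. You have supplied the details the paper omits, in particular the explicit verification via the compatibility axiom with $X=I_m\otimes\xi^*$ that $\delta$ is local completely positive with the same cone index.
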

\begin{proof} 
	Define map $\delta :S \to M_n(S)$ by $\delta(s)=(s_{ij})$ where $s_{ij}=s$ for all $1 \leq i,j\leq n$ is local completely positive and $\Phi(s)=\tilde{\Phi} \circ\delta(s)$ which implies that $\phi$ is local completely positive map.
\end{proof}

\begin{proposition}\label{1.9}
	Let $S$ and $T$ be local operator systems and let $\tau$ be a local operator system structure on $S \otimes T$. If $\phi:S \to M_n$ and $\psi: T \to M_m$ are local completely positive then $\phi\otimes \psi :S\otimes _\tau T \to M_{nm}$ is local completely positive.
\end{proposition}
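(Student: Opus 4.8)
The plan is to reduce the statement to the \emph{unital} case, which is precisely axiom~(3) of Definition~\ref{def_tensor}, and then recover the general case by perturbing $\phi$ and $\psi$ to have invertible values at the units and passing to a limit. To set things up, fix $\alpha_0\in\Gamma$ with $\phi^{(k)}(\mathcal{C}_{\alpha_0}^k)\subseteq M_{nk}^+$ for all $k$, and $\beta_0\in\Lambda$ with the analogous property for $\psi$. Choose a local positive linear functional $\delta_S\colon S\to\mathbb{C}$ with $\delta_S(e_V)=1$ --- for instance the pull-back of a state of the quotient operator system $S/M_{\alpha_0}$ of Subsection~\ref{projective}; by Proposition~\ref{6} it is automatically local completely positive. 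Pick $\delta_T$ on $T$ in the same way. Because the cone families $\{\mathcal{C}_\alpha\}$ and $\{\mathcal{D}_\beta\}$ are downward filtered, I may shrink $\alpha_0$ and $\beta_0$ so that all of $\phi,\psi,\delta_S,\delta_T$ are local (completely) positive with respect to $\mathcal{C}_{\alpha_0}$, respectively $\mathcal{D}_{\beta_0}$. Write $P:=\phi(e_V)\ge 0$ and $Q:=\psi(e_W)\ge 0$.

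For $\varepsilon,\eta>0$ put $\phi_\varepsilon(s):=\phi(s)+\varepsilon\,\delta_S(s)I_n$ and $\psi_\eta(t):=\psi(t)+\eta\,\delta_T(t)I_m$. Each is a sum of local completely positive maps, hence local completely positive with respect to $\mathcal{C}_{\alpha_0}$, respectively $\mathcal{D}_{\beta_0}$, and now $\phi_\varepsilon(e_V)=P+\varepsilon I_n$ and $\psi_\eta(e_W)=Q+\eta I_m$ are invertible. Set $P_\varepsilon:=(P+\varepsilon I_n)^{1/2}$ and $Q_\eta:=(Q+\eta I_m)^{1/2}$; then $\phi_\varepsilon^{0}:=P_\varepsilon^{-1}\phi_\varepsilon(\cdot)P_\varepsilon^{-1}$ and $\psi_\eta^{0}:=Q_\eta^{-1}\psi_\eta(\cdot)Q_\eta^{-1}$ are \emph{local unital} completely positive, still with witnesses $\alpha_0$ and $\beta_0$, since conjugation by a fixed invertible scalar matrix preserves each cone $M_{nk}^+$. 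By axiom~(3) of Definition~\ref{def_tensor}, $\phi_\varepsilon^{0}\otimes\psi_\eta^{0}\in LUCP(S\otimes_\tau T,M_{nm})$ with respect to $\mathcal{T}_{(\alpha_0,\beta_0)}$. Conjugating back, $\phi_\varepsilon\otimes\psi_\eta=(P_\varepsilon\otimes Q_\eta)\bigl(\phi_\varepsilon^{0}\otimes\psi_\eta^{0}\bigr)(\cdot)(P_\varepsilon\otimes Q_\eta)$ is local completely positive with respect to $\mathcal{T}_{(\alpha_0,\beta_0)}$, because $P_\varepsilon\otimes Q_\eta$ is a fixed invertible element of $M_{nm}$.

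It remains to remove the perturbation. Fix $k\in\mathbb{N}$ and $u\in\mathcal{T}_{(\alpha_0,\beta_0)}^k$. Then $(\phi_\varepsilon\otimes\psi_\eta)^{(k)}(u)\ge 0$ in $M_{knm}$ for all $\varepsilon,\eta>0$, and the map $(\varepsilon,\eta)\mapsto(\phi_\varepsilon\otimes\psi_\eta)^{(k)}(u)$ is continuous with limit $(\phi\otimes\psi)^{(k)}(u)$ as $\varepsilon,\eta\to0^{+}$; since $M_{knm}^+$ is closed, $(\phi\otimes\psi)^{(k)}(u)\ge 0$. As $k$ and $u\in\mathcal{T}_{(\alpha_0,\beta_0)}^k$ were arbitrary, $\phi\otimes\psi$ is local completely positive (with witness $(\alpha_0,\beta_0)$), which is what we wanted.

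The only real obstacle is bookkeeping: one must check that a single pair $(\alpha_0,\beta_0)$ can be made to witness local positivity of $\phi$, $\psi$ \emph{and} of the auxiliary functionals simultaneously --- this is exactly where the downward-filtered hypothesis on the cone families enters --- and one must be careful that $\phi_\varepsilon^{0}$ is genuinely local \emph{completely} positive rather than merely local positive. If one prefers not to argue the latter point directly, it can be routed through the functional picture of Proposition~\ref{6} and Theorem~\ref{3.2}. Everything else is the routine matching of the tensor-leg cone $\mathcal{T}_{(\alpha,\beta)}$ with its legs via axioms~(2) and~(3) of Definition~\ref{def_tensor}.
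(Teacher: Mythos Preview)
Your argument is correct and follows the same strategy as the paper: reduce to the unital case via a factorization $\phi=P\phi_1P$ with $\phi_1$ unital and $P\ge 0$, apply axiom~(3) of Definition~\ref{def_tensor} to $\phi_1\otimes\psi_1$, and conjugate back by $P\otimes Q$. The only difference is in how the factorization is produced: the paper invokes \cite[Exercise~6.2]{paulsen} to obtain it directly (so no invertibility hypothesis and no limit are needed), whereas you reconstruct it by the $\varepsilon$-perturbation and pass to the limit in the closed cone $M_{knm}^{+}$; your version is a legitimate self-contained substitute, and the downward-filtered bookkeeping you flag is handled correctly.
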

\begin{proof} From \cite[Exercise 6.2]{paulsen}, there exist local unital completely positive maps $\phi_1:S \to M_n$ and $\psi:T \to M_m$ and positive matrices $P\in M_n$ and $Q\in M_m$ such that $\phi(x)=P\phi_1(x) P$ and $\psi(y)=Q \psi_1(y) Q $ which further implies that $\phi \otimes \psi(x \otimes y)=(P \otimes Q)( \phi_1 \otimes \psi_1(x \otimes y))(P \otimes Q)$. Now by the third  property of tensor product of local operator system we have the required result.
\end{proof}

\begin{proposition}\label{1.13}
	Let $S$ and $T$ be local operator systems and let $\tau$ be a local operator system structure on $S\otimes T$. Then the local operator space $S \otimes_\tau T $ is a local operator space tensor product of the local operator spaces $S$ and $T$; that is the conditions in Definition \ref{tensor}  are satisfied.
\end{proposition}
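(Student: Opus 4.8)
The goal is to verify the three axioms of Definition \ref{tensor} for $S \otimes_\tau T$, where $S$ and $T$ are regarded as local operator spaces via the matrix seminorms $\|\cdot\|_\alpha^n$ induced by their local matrix orderings (Proposition \ref{seminorm}). The strategy is to transfer each of the three defining properties of the local operator system tensor product $\tau$ (Definition \ref{def_tensor}) to the corresponding normed statement, using at each step the dictionary between cones and seminorms established earlier. Property (1) is immediate: since $(S \otimes_\tau T, \{\mathcal{T}_\gamma^n\}, e_S \otimes e_T)$ is a local operator system, Proposition \ref{seminorm} already tells us that, with the induced seminorms $\|\cdot\|_\gamma^n$, it is a local operator space, giving Definition \ref{tensor}(1).

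\textbf{Property (2).} For the seminorm cross-inequality $\|x \otimes y\|_\gamma^{nm} \le \|x\|_\alpha^n \|y\|_\beta^m$, the plan is to unwind the definition of the induced seminorm. If $r > \|x\|_\alpha^n$ and $s > \|y\|_\beta^m$, then
\[
\begin{pmatrix} r e_S^{(n)} & x \\ x^* & r e_S^{(n)} \end{pmatrix} \in \mathcal{C}_\alpha^{2n}, \qquad \begin{pmatrix} s e_T^{(m)} & y \\ y^* & s e_T^{(m)} \end{pmatrix} \in \mathcal{D}_\beta^{2m}.
\]
Taking the tensor product of these two positive matrices and applying Definition \ref{def_tensor}(2) (which gives $\mathcal{C}_\alpha^{2n} \otimes \mathcal{D}_\beta^{2m} \subseteq \mathcal{T}_\gamma^{4nm}$ for a suitable $\gamma$), then conjugating by the appropriate scalar rectangular matrix that extracts the $2nm \times 2nm$ block $\begin{pmatrix} rs\, e^{(nm)} & x \otimes y \\ (x\otimes y)^* & rs\, e^{(nm)} \end{pmatrix}$ and using the compatibility of the matrix ordering (axiom on $X^* \mathcal{T}_\gamma X \subseteq \mathcal{T}_\gamma$), shows this block lies in $\mathcal{T}_\gamma^{2nm}$. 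Hence $\|x \otimes y\|_\gamma^{nm} \le rs$, and letting $r \downarrow \|x\|_\alpha^n$, $s \downarrow \|y\|_\beta^m$ gives the bound. The existence statements in both directions of Definition \ref{tensor}(2) follow directly from the corresponding two directions in Definition \ref{def_tensor}(2).

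\textbf{Property (3).} Given local completely bounded maps $\phi : S \to M_n$ and $\psi : T \to M_m$ w.r.t.\ $(\alpha,\beta)$-type data, I want $\phi \otimes \psi$ local completely bounded on $S \otimes_\tau T$. The plan is to reduce to the completely positive case via the standard $2\times 2$ off-diagonal trick: by Lemma \ref{1} (applied with codomain $M_2(M_n)$ and after rescaling by the local complete bounds so the maps become local complete contractions), each of $\phi$ and $\psi$ is the off-diagonal corner of a local completely positive map $\Phi : S^l_S \to M_2(M_n)$, resp.\ $\Psi : T^l_T \to M_2(M_m)$. Then $\Phi \otimes \Psi$ is local completely positive by Proposition \ref{1.9}, hence local completely bounded with the expected bound by Theorem \ref{4} (since its value at the unit is controlled by $\|\phi\|^{l.c.b.}\|\psi\|^{l.c.b.}$), and restricting to the relevant corner recovers $\phi \otimes \psi$ with the norm estimate $\|\phi \otimes \psi\|^{l.c.b.}_{(\rho,\sigma)} \le \|\phi\|^{l.c.b.}_\rho \|\psi\|^{l.c.b.}_\sigma$. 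The main obstacle is bookkeeping: one must be careful that the passage through the $2\times 2$ dilation is compatible with the \emph{local} indexing, i.e.\ that the single pair $(\alpha,\beta)$ witnessing local complete boundedness of $\phi$ and $\psi$ produces a single index $\gamma$ on $S \otimes_\tau T$ for which the bound holds — but this is exactly what Definition \ref{def_tensor}(2)--(3) and Proposition \ref{1.9} guarantee, so once the indices are tracked the argument closes. I expect the delicate point to be confirming that Lemma \ref{1} and Proposition \ref{1.9} can be chained without a mismatch in the quantized-domain indices, after which Theorem \ref{4} finishes Property (3).
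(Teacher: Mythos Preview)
Your treatment of Properties (1) and (2) is correct and matches the paper's approach exactly: the tensor of the two $2\times 2$ positive blocks, pushed through Definition~\ref{def_tensor}(2) and compressed, gives the cross-seminorm inequality.

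For Property (3), however, there is a genuine mismatch. You invoke Lemma~\ref{1} to dilate $\phi$ to a local completely positive map $\Phi$ with domain the Paulsen system $S^l_S$, and similarly $\Psi$ with domain $S^l_T$, and then you want Proposition~\ref{1.9} to make $\Phi\otimes\Psi$ local completely positive. But Proposition~\ref{1.9} requires a local operator system tensor structure on the tensor product of the \emph{domains} of the two maps, and $\tau$ is only given on $S\otimes T$, not on $S^l_S\otimes S^l_T$. So the chain breaks at exactly the bookkeeping point you flagged as delicate: the indices cannot be matched because the underlying spaces are wrong.

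The paper fixes this by using Theorem~\ref{1.6} rather than Lemma~\ref{1}. The Wittstock-type decomposition there yields a local completely positive $\Phi:M_2(S)\to M_2(M_n)$ with off-diagonal corner $\phi$ and local unital completely positive diagonal corners $\phi_{1,1},\phi_{2,2}$. One then precomposes with the ``all entries equal'' map $\delta:S\to M_2(S)$, $\delta(s)=(s)_{i,j}$, to obtain $\Phi_0=\Phi\circ\delta:S\to M_2(M_n)$, which is local completely positive by Proposition~\ref{1.8}. Now $\Phi_0$ and the analogously built $\Psi_0$ have domains $S$ and $T$, so Proposition~\ref{1.9} applies directly on $S\otimes_\tau T$ to give $\Phi_0\otimes\Psi_0:S\otimes_\tau T\to M_4(M_{nm})$ local completely positive; $\phi\otimes\psi$ is then a corner of this map and Theorem~\ref{4} delivers the bound. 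The strategic idea---dilate to positive, tensor, compress---is the same as yours; the crucial adjustment is that the dilation must be arranged so that its domain is $S$ (respectively $T$) itself, not the Paulsen system.
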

\begin{proof} 
	Clearly condition (1) in Definition \ref{tensor} is satisfied.
	Now we will prove condition (2) is also satisfied. Let $e$ and $f$ denote the local order unit of $S$ and $T$ resp. For condition (2) it will be enough to assume that $||s||_\alpha^n\leq 1$ and  $||t||_\beta^m \leq 1$ and show that $||s\otimes t||_{(\alpha,\beta)}^{nm}\leq 1 $. As $||s||_\alpha^n\leq 1$ so we have $P=\begin{pmatrix}
	e_n & s\\
	s* & e_n
	\end{pmatrix} \in C_\alpha^{2n}.$ Similarly as  $||t||_\beta^m \leq 1$ we have $Q=\begin{pmatrix}
	f_m & t\\
	t* & f_m
	\end{pmatrix} \in D_\beta^{2m}.$ Now by using property (2) of tensor product of local operator system we can easily have the condition (2).
	%
 Now we will prove the condition (3) in Definition \ref{tensor}. To prove this condition it will be enough to consider $||\phi||_\alpha^{l.c.b}\leq 1$, $||\psi||_\beta^{l.c.b}\leq 1$. We have that 
	$||\phi||_\alpha^{l.c.b}\leq 1$ by Theorem \ref{1.6} there exist a local completely positive map $\Phi:M_2(S)\to M_2(M_n)$ by $\Phi 
	\begin{pmatrix}
	s_{11} & s_{12}\\
	s_{21} & s_{22}
	\end{pmatrix}=
	\begin{pmatrix}
	\phi_{1,1}(s_{11})& \phi(s_{12})\\
	\phi(s_{21}^*)^* &\phi_{2,2}(s_{22})
	\end{pmatrix}$
	where $\phi_{1,1},\phi_{2,2}:S \to M_n$ are local unital completely positive maps. Also there exists a similar local completely positive map $\Psi:M_2(T) \to M_2(M_m)$ with analogues properties.
	Let $\Phi_0=\Phi o\delta:S \to M_2(M_n)$ so that $\Phi_0(s)=\begin{pmatrix}
	\phi_{1,1}(s_{11})& \phi(s_{12})\\
	\phi(s_{21}^*)^* &\phi_{2,2}(s_{22})
	\end{pmatrix}$ and  $\Psi_0:T \to M_2(M_m)$ be defined in a similar way. By Proposition \ref{1.8} We have that $\Phi_0$ and $\Psi_0$ are local completely positive maps. By Proposition \ref{1.9} $\Phi_0\otimes \Psi_0: S\otimes _\tau T \to M_4(M_{mn})$ is local completely positive map and required result is now obvious.
	\end{proof}

We know that every operator space can be embedded into a operator system completely isometrically \cite[Section 3]{kavruk2011tensor}. We have similar result in case of local operator spaces.
If $V\subseteq C_\mathcal{E}(D)$ where $D=\cup H_\alpha$ then $S^l_V \subseteq C_{\mathcal{E}^2}^*(D^2)$ where $D^2=\cup H_\alpha \oplus H_\alpha$ is the local operator system given by $$S^l_V=\bigg\{\begin{pmatrix} \lambda I & v\\
w^* & \mu I\end{pmatrix}: \lambda,\mu \in \mathbb{C};v,w \in V\bigg\}$$
We see $V\subseteq S^l_V$ via the inclusion $v \rightarrow \begin{pmatrix}
0&v\\
0 &0
\end{pmatrix}$
\begin{remark}
	If $V=\lim\limits_{\leftarrow}V_\alpha$ then $S^l_V \cong \lim\limits_{\leftarrow} S_{V_\alpha}$ in local complete isomporphism sense.
\end{remark}
\begin{definition}
	Let $X$ and $Y$ be local operator spaces, and $\tau$ be a local opeartor systems structure on $S_X^l \otimes S_Y^l$. Then the embedding $X\otimes Y\subseteq S_X^l\otimes_\tau S _Y^l$ endows $X \otimes Y$ with a local opeartor space structure; we call the resulting local operator space the induced local operator space tensor product of $X$ and $Y$ and denote it by $X \otimes^{\tau} Y$ .
\end{definition}
\begin{proposition}
	let $X$ and $Y$ be local operator spaces and $\tau$ be a local operator system structure on $S_X^l\otimes S_Y^l$ and let $X\otimes^{\tau_l}Y$ be the induced local operator space tensor product. Then $X\otimes^{\tau}Y$ is a local operator space tensor product in the sense of Definition \ref{tensor}.
\end{proposition}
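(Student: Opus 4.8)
The plan is to verify, one at a time, the three defining conditions of a local operator space tensor product from Definition~\ref{tensor} for $X\otimes^{\tau}Y$, reducing each to the corresponding statement for the local operator \emph{system} tensor product $S_X^l\otimes_\tau S_Y^l$, which is available because $\tau$ is a local operator system structure and hence, by Proposition~\ref{1.13}, already a local operator space tensor product of the underlying local operator spaces $S_X^l$ and $S_Y^l$. The key structural fact driving everything is the completely isometric inclusion $X\subseteq S_X^l$ (and $Y\subseteq S_Y^l$) via $v\mapsto\left(\begin{smallmatrix}0&v\\0&0\end{smallmatrix}\right)$, together with the compatible inclusion $X\otimes Y\subseteq S_X^l\otimes_\tau S_Y^l$ that \emph{defines} the matrix seminorms on $X\otimes^{\tau}Y$; so condition (1), that $X\otimes^{\tau}Y$ is a local operator space, is immediate since a subspace of a local operator space, equipped with the restricted family of matrix seminorms, is again a local operator space.

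For condition (2), the uniformity estimate, I would fix $\rho\in\Gamma_X$ and $\sigma\in\Lambda_Y$; by Proposition~\ref{1.13} applied to $S_X^l\otimes_\tau S_Y^l$ there is $\delta\in\Omega$ with $\|u\otimes u'\|_\delta^{nm}\le\|u\|_\rho^n\,\|u'\|_\sigma^m$ for $u\in M_n(S_X^l)$, $u'\in M_m(S_Y^l)$; restricting $u,u'$ to the copies of $M_n(X)$ and $M_m(Y)$ and using that the inclusions $X\hookrightarrow S_X^l$, $Y\hookrightarrow S_Y^l$ and $X\otimes Y\hookrightarrow S_X^l\otimes_\tau S_Y^l$ are (local) isometries at every matrix level, the same $\delta$ works for $X\otimes^{\tau}Y$; the reverse direction (from $\delta$ produce $\rho,\sigma$) is symmetric. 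For condition (3), given local completely bounded $\phi:X\to M_n$ and $\psi:Y\to M_m$, I would first extend them to local completely bounded maps on $S_X^l$, $S_Y^l$ with the same local cb-norms — this is exactly where the matrix-amplification trick of Lemma~\ref{1} and Theorem~\ref{1.6} enters: a local completely contractive map off $V$ dilates to a local completely positive, hence (by Theorem~\ref{4}) local completely bounded, map on $S_V^l$ with controlled norm — then invoke condition (3) of Proposition~\ref{1.13} for $S_X^l\otimes_\tau S_Y^l$ to get $\tilde\phi\otimes\tilde\psi$ local cb with $\|\tilde\phi\otimes\tilde\psi\|^{l.c.b}\le\|\tilde\phi\|^{l.c.b}\|\tilde\psi\|^{l.c.b}=\|\phi\|^{l.c.b}\|\psi\|^{l.c.b}$, and finally restrict along $X\otimes Y\subseteq S_X^l\otimes_\tau S_Y^l$.

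I expect the main obstacle to be condition (3), specifically making the extension step quantitatively clean in the local setting: one must be careful that the index pairs $(\alpha,\beta)$ controlling the local complete bound of $\phi$ and $\psi$ are matched up correctly with the index of $\tau$ on $S_X^l\otimes_\tau S_Y^l$, and that passing to $S_V^l\subseteq M_2(C_\mathcal{E}(D))$ via the $2\times 2$ construction does not lose the projective-limit bookkeeping (the domains become $D^2=\cup\,H_\alpha\oplus H_\alpha$). All of this is handled if one works on the level of the approximating operator spaces and C*-algebras, exactly as in Lemma~\ref{1}, and then reassembles; the remaining arguments are the routine restriction-of-seminorm computations that I would not spell out in detail.
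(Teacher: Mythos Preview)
Your proposal is correct and follows essentially the same route as the paper: condition (1) is immediate from the subspace structure, condition (2) comes from Proposition~\ref{1.13} together with the local completely isometric inclusions $X\hookrightarrow S_X^l$ and $Y\hookrightarrow S_Y^l$, and condition (3) is obtained by lifting local completely contractive maps on $X$, $Y$ to unital local completely positive maps on $S_X^l$, $S_Y^l$ via Lemma~\ref{1} and then tensoring. The only minor difference is that the paper, after applying Lemma~\ref{1}, invokes property~(3) of Definition~\ref{def_tensor} directly (the lifted maps are unital local cp into matrix algebras, so $\Phi\otimes\Psi$ is LUCP and hence local cb by Theorem~\ref{4}), whereas you route the same step through the already-established condition~(3) of Proposition~\ref{1.13}; your mention of Theorem~\ref{1.6} is unnecessary here since Lemma~\ref{1} alone suffices.
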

\begin{proof} 
	Clearly first condition is satisfied. Second condition is also satisfied from Proposition \ref{1.13} and the fact that the inclusion $X\subseteq S_X^l$ and $Y \subseteq S_Y^l$ are local complete isometry.
	Now we will see third condition can be proven easily using Lemma \ref{1} and Property (3) of tensor product of local operator system.
$	$ 
\end{proof}

\subsection{The minimal tensor product}
In this subsection, we give a construction of the local operator system tensor product $\mathrm{lmin}$, which is minimal among all local operator system tensor products.

Let $\left(V, \left\{\{\mathcal{C}_\alpha ^n\}_{n=1} ^{\infty}\; : \; \alpha \in \Gamma \right\}, e_V\right)$ and $\left(W, \left\{\{\mathcal{D}_\beta ^n\}_{n=1} ^{\infty}\; : \; \beta \in \Lambda \right\},e_W\right)$ be local operator systems. For each $\alpha \in \Gamma ,\beta \in \Lambda, n\in \mathbb{N}$, define
\begin{align*}
\begin{split}
\mathcal{T}_{(\alpha,\beta)}^{n(\lmin)}   := &\big\{(p_{ij}) \in M_n(V\otimes W) : ((\phi \otimes \psi)(p_{i,j}))\in M_{nkm}^+, \text{ for all } \phi:V \to M_k \\
& \text{ local unital completely positive map w.r.t. cone } \mathcal{C}_\alpha, \psi: W \to M_m \\ & \text{ local unital completely positive map w.r.t. cone } \mathcal{D}_\beta \text{ for all } k,m\in \mathbb{ N} \big\}
\end{split}
\end{align*}
\begin{lemma}\label{lmin_lucp}
	Let $\left(V, \left\{\{\mathcal{C}_\alpha ^n\}_{n=1} ^{\infty}\; : \; \alpha \in \Gamma\right\}, e_V\right)$ be a local operator system and $A \in M_n(V)$. If $\phi^{(n)}(A)\in M_{nk}^+$ for every $k \in \mathbb{N}$ and for every $\phi \in S^\alpha_k(V) $ where $S^\alpha_k(V) $ denotes the set of local unital completely positive maps $\Phi:V \to M_k
	$ w.r.t. cone $\mathcal{C}_\alpha,$ then $A\in \mathcal{C}_\alpha^n$.
\end{lemma}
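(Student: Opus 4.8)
The plan is to show that if $A \in M_n(V)$ is ``detected as positive'' by all local u.c.p.\ maps into matrix algebras (with respect to the fixed cone $\mathcal{C}_\alpha$), then $A$ already lies in $\mathcal{C}_\alpha^n$. The natural route is a separation (Hahn--Banach) argument, carried out not on $V$ itself but on the quotient operator system $V_\alpha := V/M_\alpha$ where $M_\alpha = \langle \mathcal{C}_\alpha \cap -\mathcal{C}_\alpha\rangle$, which (as established in Section \ref{projective}) is an honest operator system with cones $M_n(V_\alpha)^+ = \{[v_{ij}+M_\alpha] : [v_{ij}]\in\mathcal{C}_\alpha^n\}$. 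Since by Proposition \ref{closure} the Archimedeanization of $\mathcal{C}_\alpha$ is its closure in the seminorm topology, and since $\mathcal{C}_\alpha^n$ is already closed by Proposition \ref{seminorm}, it suffices to prove that the image $\bar A$ of $A$ in $M_n(V_\alpha)$ lies in $M_n(V_\alpha)^+$.

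First I would observe that a local u.c.p.\ map $\phi : V \to M_k$ with respect to $\mathcal{C}_\alpha$ factors through the quotient, i.e.\ induces a u.c.p.\ map $\phi_\alpha : V_\alpha \to M_k$, and conversely every u.c.p.\ map on $V_\alpha$ lifts; so the hypothesis says precisely that $\phi_\alpha^{(n)}(\bar A) \in M_{nk}^+$ for every u.c.p.\ $\phi_\alpha : V_\alpha \to M_k$ and every $k$. Then the statement reduces to the known operator-system fact: in an operator system $W$, an element $\bar A \in M_n(W)$ is positive if and only if $\phi^{(n)}(\bar A) \geq 0$ for all u.c.p.\ $\phi : W \to M_k$, all $k$. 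The proof of that fact is the standard separation argument: if $\bar A \notin M_n(W_h)^+$, then (using that the cone in $M_n(W)$ is closed in the order-unit norm topology, which transfers from Propositions \ref{seminorm} and \ref{closure}) one separates $\bar A$ from the cone by a self-adjoint functional $s$ on $M_n(W)$ with $s \geq 0$ on $M_n(W)^+$ and $s(\bar A) < 0$; after normalizing against the matrix order unit $e_n$ one gets a state, and the correspondence $s \leftrightarrow \phi_s$ between states on $M_n(W)$ and u.c.p.\ maps $W \to M_n$ (as in the discussion preceding Theorem \ref{3.2}, i.e.\ $\phi_s(w) = (n\, s(w\otimes E_{ij}))$) produces a u.c.p.\ $\phi_s : W \to M_n$ with $\langle \phi_s^{(n)}(\bar A)\xi,\xi\rangle = s(\bar A) < 0$ for a suitable unit vector $\xi$, contradicting the hypothesis (take $k = n$).

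The main obstacle, and the step that needs care rather than invocation, is the factorization/lifting correspondence between local u.c.p.\ maps on $V$ (w.r.t.\ the single cone $\mathcal{C}_\alpha$) and u.c.p.\ maps on the operator system $V_\alpha$: one must check that ``local unital completely positive w.r.t.\ $\mathcal{C}_\alpha$'' for a map into $M_k$ is equivalent to positivity after passing to the quotient, including that the kernel $M_\alpha$ is automatically annihilated (this is where the condition $\phi^{(n)}(v) = 0$ whenever $v|_{H^n_\alpha}=0$ in the definition of local matrix positivity is used). Once that translation is in hand, everything else is the classical operator-system separation argument applied to $V_\alpha$, and pulling the conclusion back through the quotient uses only that $\mathcal{C}_\alpha^n$ is closed (Proposition \ref{seminorm}) so that $\bar A \in M_n(V_\alpha)^+$ forces $A \in \mathcal{C}_\alpha^n$ rather than merely $A \in \overline{\mathcal{C}_\alpha^n}$. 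I would also remark that this lemma is exactly the ingredient needed to verify that $\{\mathcal{T}_{(\alpha,\beta)}^{n(\lmin)}\}$ defines a genuine local matrix ordering and that $\lmin$ satisfies Definition \ref{def_tensor}.
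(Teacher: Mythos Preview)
Your approach is valid but genuinely different from the paper's. The paper does not pass to the quotient or separate: it invokes the representation theorem (Theorem~\ref{representation}) to assume $V \subseteq C^*_{\mathcal{E}}(\mathcal{D})$ concretely, and for each $\xi = (\xi_1,\ldots,\xi_n) \in H_\alpha^n$ observes that the vector-coefficient map $\phi : V \to M_n$, $\phi(x) = (\langle x\xi_j,\xi_i\rangle)_{i,j}$, is local completely positive with respect to $\mathcal{C}_\alpha$; the hypothesis then gives $\phi^{(n)}(A) \in M_{n^2}^+$, whence $\langle A\xi,\xi\rangle \geq 0$ for every such $\xi$, i.e.\ $A|_{H_\alpha^n} \geq 0$. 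This is short, constructive, and exactly parallels the bounded case. Your route through $V_\alpha = V/M_\alpha$ and Hahn--Banach separation has the merit of staying inside the abstract framework, at the cost of length and a mild circularity (the operator-system separation fact you invoke is essentially Choi--Effros, whose local version is precisely Theorem~\ref{representation}). One correction to your pullback step: closedness of $\mathcal{C}_\alpha^n$ in the full seminorm topology is not the relevant point. What you actually need is that a self-adjoint $C \in M_n(M_\alpha)$ already lies in $\mathcal{C}_\alpha^n \cap (-\mathcal{C}_\alpha^n)$; this holds because the entries of $C$ have $\|\cdot\|_\alpha$-seminorm zero, so $\|C\|_\alpha^n = 0$, hence by the Archimedean property $\begin{pmatrix} 0 & C \\ C & 0 \end{pmatrix} \in \mathcal{C}_\alpha^{2n}$, and compressing by $(I_n,\pm I_n)$ yields $\pm 2C \in \mathcal{C}_\alpha^n$. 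With that fix, $\bar A \in M_n(V_\alpha)^+$ indeed forces $A \in \mathcal{C}_\alpha^n$.
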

\begin{proof} 
	By Theorem \ref{representation}, we may assume that $V \subseteq C_{\mathcal{E}}^*(D)$ where $D=\cup H_\alpha$. Suppose that $A=(a_{ij}) \in M_n(V)$ and $\phi^{(n)}(A)\in M_{nk}^+$ for every $\phi \in S^\alpha_k (V)$ and $k\in \mathbb{N}$. Let $\xi=(\xi_1,\xi_2,...,\xi_n)\in H_\alpha^n$ and $\phi:V \to M_n$ be the mapping given by $\phi(x)=\langle x \xi_j,\xi_i\rangle_{i,j}$. We can easily show that $\phi$ is local completely positive map w.r.t $\mathcal{C}_\alpha$
 hence $\phi^{(n)}(A)=(\phi(a_{ij}))_{i,j} \in M_{n^2}$ which further implies that
 $A|_{H_\alpha} \in M_n(B(H_\alpha))^+$. 
 \end{proof}


\begin{lemma}\label{lmin_lucp2}
	Let $\left(V, \left\{\{\mathcal{C}_\alpha ^n\}_{n=1} ^{\infty}\; : \; \alpha \in \Gamma\right\}, e_V\right)$ and $\left(W, \left\{\{\mathcal{D}_\beta ^n\}_{n=1} ^{\infty}\; : \; \beta \in \Lambda \right\},e_W\right)$ be local operator systems and $A \in M_n(V)\otimes W $. If $(\phi^{(n)}\otimes \psi)(A)\geq 0$ for all $\phi \in S^\alpha_\infty(V)$ and all $\psi \in S^\beta_\infty(T)$, then $(\Phi \otimes \psi )(A) \geq 0$ for all $\Phi \in S^\alpha_\infty(M_n(V))$ and all $\psi \in S^\beta_\infty (W)$ where $S_\infty^\alpha(V)=\cup S_k^\alpha(V)$.	
\end{lemma}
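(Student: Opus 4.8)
The plan is to reduce an arbitrary local unital completely positive (u.c.p.)\ map on $M_n(V)$ to a family of finite-dimensional local u.c.p.\ maps on $V$, so that the hypothesis can be applied. Fix $\Phi\in S^\alpha_\infty(M_n(V))$, say $\Phi\colon M_n(V)\to M_j$, and $\psi\in S^\beta_\infty(W)$, say $\psi\colon W\to M_m$. Using the identification $V=\lim\limits_{\leftarrow}V_\alpha$ from Subsection~\ref{projective}, where each $V_\alpha$ is an operator system and $\pi_\alpha\colon V\to V_\alpha$ is the canonical map, the map $\Phi$ — being local u.c.p.\ with respect to $\mathcal{C}_\alpha$ and having the single operator system $M_j$ as target — vanishes on $\ker\pi_\alpha^{(n)}$ and is positive on $\mathcal{C}_\alpha^n$ at every matrix level, so it factors as $\Phi=\Phi_\alpha\circ\pi_\alpha^{(n)}$ for a genuine u.c.p.\ map $\Phi_\alpha\colon M_n(V_\alpha)\to M_j$ of operator systems; realizing $V_\alpha\subseteq B(H_\alpha)$ gives $M_n(V_\alpha)\subseteq M_n(B(H_\alpha))$. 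By the Arveson extension theorem and Stinespring's dilation (see \cite{paulsen}) there are a Hilbert space $\mathcal{K}$, a unital $\ast$-representation $\pi\colon M_n(B(H_\alpha))\to B(\mathcal{K})$ and an isometry $S\colon\mathbb{C}^j\to\mathcal{K}$ with $\Phi_\alpha(X)=S^\ast\pi(X)S$. Since $M_n(B(H_\alpha))\cong B(H_\alpha)\otimes M_n$, every such representation is of the form $\pi=\rho\otimes\mathrm{id}_{M_n}$ with $\mathcal{K}\cong\mathcal{K}_0\otimes\mathbb{C}^n$ for a unital $\ast$-representation $\rho\colon B(H_\alpha)\to B(\mathcal{K}_0)$; consequently $\Phi=\mathrm{Ad}_S\circ\widetilde\phi^{(n)}$, where $\widetilde\phi:=(\rho|_{V_\alpha})\circ\pi_\alpha\colon V\to B(\mathcal{K}_0)$ is local u.c.p.\ with respect to $\mathcal{C}_\alpha$, $\widetilde\phi^{(n)}$ is its $M_n$-amplification, and $\mathrm{Ad}_S(Y)=S^\ast YS$.

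The next step is to prove that $(\widetilde\phi^{(n)}\otimes\psi)(A)\geq 0$ in $B(\mathbb{C}^n\otimes\mathcal{K}_0\otimes\mathbb{C}^m)$. Since the subspaces $\mathbb{C}^n\otimes F\otimes\mathbb{C}^m$ with $F\subseteq\mathcal{K}_0$ finite-dimensional have dense union, it suffices to check that the compression of $(\widetilde\phi^{(n)}\otimes\psi)(A)$ to each such subspace is self-adjoint and positive. Letting $P$ be the orthogonal projection of $\mathcal{K}_0$ onto $F$, the map $P\widetilde\phi(\cdot)P$ is a compression of the local completely positive map $\widetilde\phi$, and it is unital when regarded as a map into $B(F)\cong M_{\dim F}$ because $\widetilde\phi(e_V)=I_{\mathcal{K}_0}$; hence $P\widetilde\phi(\cdot)P\in S^\alpha_{\dim F}(V)$. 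The hypothesis then gives $\big((P\widetilde\phi(\cdot)P)^{(n)}\otimes\psi\big)(A)\geq 0$, and a direct computation identifies this with the compression of $(\widetilde\phi^{(n)}\otimes\psi)(A)$ by $I_n\otimes P\otimes I_m$. Letting $F$ increase to $\mathcal{K}_0$ yields $(\widetilde\phi^{(n)}\otimes\psi)(A)\geq 0$.

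Finally, from $\Phi=\mathrm{Ad}_S\circ\widetilde\phi^{(n)}$ one obtains, after the obvious reordering of tensor legs, $(\Phi\otimes\psi)(A)=(\mathrm{Ad}_S\otimes\mathrm{id}_{M_m})\big((\widetilde\phi^{(n)}\otimes\psi)(A)\big)$; since $\mathrm{Ad}_S\otimes\mathrm{id}_{M_m}$ is completely positive and its argument is positive by the previous step, $(\Phi\otimes\psi)(A)\geq 0$, which proves the claim. I expect the first paragraph to be the main obstacle: one must show that an arbitrary local u.c.p.\ map on $M_n(V)$ is, up to a completely positive compression, the $M_n$-amplification of a local u.c.p.\ map on $V$, which forces a dilation argument and the representation theory of $B(H_\alpha)\otimes M_n$; once this factorization is in hand, the passage to finite-dimensional targets and the positivity bookkeeping are routine.
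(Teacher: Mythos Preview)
Your argument is correct, but it is considerably heavier than what the paper actually does. The paper simply invokes Lemma~\ref{lmin_lucp} together with \cite[Lemma 4.2]{kavruk2011tensor}, whose content is the following short trick: fix $\psi\in S^\beta_m(W)$ and set $B=(\mathrm{id}_{M_n(V)}\otimes\psi)(A)\in M_n(V)\otimes M_m\cong M_{nm}(V)$; the hypothesis $(\phi^{(n)}\otimes\psi)(A)\geq 0$ for all $\phi\in S^\alpha_\infty(V)$ reads exactly as $\phi^{(nm)}(B)\geq 0$ for all such $\phi$, so Lemma~\ref{lmin_lucp} gives $B\in\mathcal{C}_\alpha^{nm}$, and then for any $\Phi\in S^\alpha_\infty(M_n(V))$ one has $(\Phi\otimes\psi)(A)=\Phi^{(m)}(B)\geq 0$ because $\Phi$ is local completely positive with respect to $\mathcal{C}_\alpha$. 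No dilation, no Arveson extension, no representation theory of $M_n(B(H_\alpha))$ is needed.

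What your route buys is an explicit structural description of every $\Phi\in S^\alpha_\infty(M_n(V))$ as a compression of an $n$-fold amplification $\widetilde\phi^{(n)}$ of a single local u.c.p.\ map on $V$; this is a pleasant fact in its own right and makes the reduction to the hypothesis very transparent. What the paper's route buys is economy: by sliding $\psi$ through first and appealing to Lemma~\ref{lmin_lucp}, the problem collapses to a one-line positivity check in $M_{nm}(V)$, avoiding the factorization $\Phi=\Phi_\alpha\circ\pi_\alpha^{(n)}$, Stinespring, and the finite-rank approximation step entirely.
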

\begin{proof}  This lemma can be easily proven using Lemma \ref{lmin_lucp} and \cite[Lemma 4.2]{kavruk2011tensor}.
\end{proof}

\begin{lemma}\label{lmin_lucp3} If $\phi \in S_\infty^\alpha(V)$ and $\psi\in S_\infty^\beta (W)$ then $(\phi \otimes \psi)^{(n)}=\phi^{(n)}\otimes \psi$.
\end{lemma}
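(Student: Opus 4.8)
The plan is to check the claimed identity on elementary tensors and then conclude by linearity, after pinning down the canonical vector-space identifications that make the statement meaningful. Write $\phi:V\to M_k$ and $\psi:W\to M_m$ for the given maps, so that $\phi\otimes\psi:V\otimes W\to M_k\otimes M_m=M_{km}$. For any $\ast$-vector space $X$ one has the canonical identification $M_n(X)=M_n\otimes X$, under which the $n$-th matrix amplification is precisely tensoring on the left with $\mathrm{id}_{M_n}$; in particular $M_n(V\otimes W)=M_n\otimes V\otimes W=M_n(V)\otimes W$ via $[v_{ij}\otimes w]\leftrightarrow[v_{ij}]\otimes w$, and $M_n(M_{km})=M_n\otimes M_k\otimes M_m=M_n(M_k)\otimes M_m$. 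With these identifications in force, both $(\phi\otimes\psi)^{(n)}$ and $\phi^{(n)}\otimes\psi$ are linear maps from $M_n\otimes V\otimes W$ to $M_n\otimes M_k\otimes M_m$, so the equality asserted in the lemma is a genuine statement to be checked.

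The computation I would carry out is the following. Evaluating on a typical elementary tensor $[v_{ij}]\otimes w$ of $M_n(V)\otimes W$: under the identification above this is the matrix $[v_{ij}\otimes w]\in M_n(V\otimes W)$, and applying $\phi\otimes\psi$ entrywise gives $(\phi\otimes\psi)^{(n)}\big([v_{ij}\otimes w]\big)=\big[(\phi\otimes\psi)(v_{ij}\otimes w)\big]=\big[\phi(v_{ij})\otimes\psi(w)\big]$. On the other side, $(\phi^{(n)}\otimes\psi)\big([v_{ij}]\otimes w\big)=\phi^{(n)}\big([v_{ij}]\big)\otimes\psi(w)=\big[\phi(v_{ij})\big]\otimes\psi(w)$, which under $M_n(M_k)\otimes M_m=M_n(M_{km})$ is exactly the matrix $\big[\phi(v_{ij})\otimes\psi(w)\big]$. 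Thus the two maps agree on every elementary tensor of $M_n(V)\otimes W$; since these span $M_n(V)\otimes W=M_n(V\otimes W)$ and both maps are linear, they coincide. Equivalently, both maps are literally $\mathrm{id}_{M_n}\otimes\phi\otimes\psi$ once the associativity of the algebraic tensor product is invoked.

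The step that needs the most attention — though it is bookkeeping rather than a genuine obstacle — is keeping the canonical isomorphisms $M_n(V)\otimes W\cong M_n(V\otimes W)$ and $M_n(M_k)\otimes M_m\cong M_n(M_{km})\cong M_{nkm}$ mutually consistent, so that ``apply $\phi\otimes\psi$ entrywise'' really is the same operation as ``tensor $\phi^{(n)}$ with $\psi$''; this is exactly the standard operator-space convention (see \cite{paulsen}). Note that no positivity, Archimedean, or topological hypothesis enters: the identity is purely algebraic and holds for arbitrary linear maps $\phi,\psi$, the assumptions $\phi\in S_\infty^\alpha(V)$, $\psi\in S_\infty^\beta(W)$ serving only to fix the finite-dimensional codomains $M_k$ and $M_m$ on which the matrix amplifications are taken.
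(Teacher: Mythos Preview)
Your proof is correct and follows essentially the same approach as the paper: both check the identity on elementary tensors $X\otimes y$ with $X=(v_{ij})\in M_n(V)$ and $y\in W$, compute each side to get $[\phi(v_{ij})\otimes\psi(y)]$, and conclude by linearity. You are simply more explicit than the paper about the underlying identifications $M_n(V\otimes W)\cong M_n(V)\otimes W$ and $M_n(M_{km})\cong M_n(M_k)\otimes M_m$, but the argument is the same.
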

\begin{proof} 	It is sufficient to check the equality on elementary tensors of the form $A=X\otimes y$, where $X=(x_{ij}) \in M_n(V)$ and $y \in W$. For such a $A$, we have that $(\phi^{(n)}\otimes \psi)(A)=(\phi(x_{ij}))_{ij}\otimes \psi(y)$. On the other hand, $(\phi \otimes \psi)^{(n)}(A)=((\phi \otimes \psi)(x_{ij}\otimes y))_{ij}=(\phi(x_{ij})\otimes \psi(y))_{ij}$.
\end{proof}

\begin{theorem}\label{lmin_tp}
	Let $\left(V, \left\{\{\mathcal{C}_\alpha ^n\}_{n=1} ^{\infty}\; : \; \alpha \in \Gamma\right\}, e_V\right)$ and $\left(W, \left\{\{\mathcal{D}_\beta ^n\}_{n=1} ^{\infty}\; : \;\beta \in \Lambda \right\},e_W\right)$ be local operator systems and let $i_V:V \to C_{\mathcal{E}}^*(D)$ and $i_W :W \to C_{\mathcal{G}}^* (F)$ be embedding that are local unital complete order isomorphism onto their ranges. The family $\left\{\{\mathcal{T}_{(\alpha,\beta)}^{n(\lmin)}\}_{n=1}^\infty \; :\; (\alpha,\beta)\in \Gamma \times \Lambda \right\}$ is the local operator system structure on $V\otimes W$ arising from the embedding $i_V \otimes i_W: V\otimes W \to C_{\mathcal{E\times G}}^*(D\otimes F)$ and we have  $D\otimes F \cong \cup_{(\alpha,\beta) \in \Gamma\times \Lambda} H_\alpha \otimes K_\beta $ where $D=\cup H_\alpha$  and $F=\cup K_\beta$.
\end{theorem}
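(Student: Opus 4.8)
The plan is to identify the $\lmin$ cone structure with the concrete operator system structure inherited from the minimal tensor product of the ambient multinormed $C^*$-algebras, and then show this is a legitimate local operator system tensor product in the sense of Definition \ref{def_tensor}. First I would fix the embeddings $i_V\colon V\hookrightarrow C_{\mathcal E}^*(D)$ and $i_W\colon W\hookrightarrow C_{\mathcal G}^*(F)$ with $D=\cup H_\alpha$, $F=\cup K_\beta$, and observe that the quantized domain $D\otimes F$ is the union of the upward-filtered family $\{H_\alpha\otimes K_\beta\}_{(\alpha,\beta)\in\Gamma\times\Lambda}$, whose projections are $P_\alpha\otimes Q_\beta$; this gives a multinormed $C^*$-algebra $C^*_{\mathcal E\times\mathcal G}(D\otimes F)$ into which $V\otimes W$ embeds via $i_V\otimes i_W$. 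The concrete local operator system structure so obtained has, at level $(\alpha,\beta)$, cones consisting of those $(p_{ij})\in M_n(V\otimes W)$ with $(p_{ij})|_{(H_\alpha\otimes K_\beta)^n}\ge 0$ in $M_n(B(H_\alpha\otimes K_\beta))$. Call these cones $\widetilde{\mathcal T}_{(\alpha,\beta)}^{\,n}$; the goal is to prove $\widetilde{\mathcal T}_{(\alpha,\beta)}^{\,n}=\mathcal T_{(\alpha,\beta)}^{n(\lmin)}$ for all $n$, $\alpha$, $\beta$.

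The key technical step is this equality of cones, and I expect it to be the main obstacle. For the inclusion $\widetilde{\mathcal T}\subseteq\mathcal T^{(\lmin)}$: if $(p_{ij})|_{(H_\alpha\otimes K_\beta)^n}\ge 0$ and $\phi\colon V\to M_k$, $\psi\colon W\to M_m$ are local u.c.p.\ w.r.t.\ $\mathcal C_\alpha$, $\mathcal D_\beta$, then by the Stinespring-type structure available on operator systems (applied on $H_\alpha$ and $K_\beta$ after restriction), $\phi\otimes\psi$ factors through restriction to $H_\alpha\otimes K_\beta$ followed by a u.c.p.\ map on $B(H_\alpha\otimes K_\beta)$, hence preserves positivity, giving $((\phi\otimes\psi)(p_{ij}))\in M_{nkm}^+$. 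The reverse inclusion is where Lemmas \ref{lmin_lucp}, \ref{lmin_lucp2}, \ref{lmin_lucp3} do the work: given $(p_{ij})\in\mathcal T_{(\alpha,\beta)}^{n(\lmin)}$, I would view it as an element of $M_n(V)\otimes W$, apply Lemma \ref{lmin_lucp2} to upgrade from vector-functional maps $\phi\in S_\infty^\alpha(V)$ to all $\Phi\in S_\infty^\alpha(M_n(V))$, use Lemma \ref{lmin_lucp3} to commute $(\phi\otimes\psi)^{(n)}$ with $\phi^{(n)}\otimes\psi$, and then invoke Lemma \ref{lmin_lucp} twice — once in the $W$-variable and once in the $M_n(V)$-variable — to conclude that $(p_{ij})$ restricts to a positive element of $M_n(B(H_\alpha\otimes K_\beta))$, i.e.\ lies in $\widetilde{\mathcal T}_{(\alpha,\beta)}^{\,n}$. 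One must be slightly careful that the vector-functional maps $x\mapsto(\langle x\xi_j,\xi_i\rangle)$ on $C^*_{\mathcal E}(D)$ used in Lemma \ref{lmin_lucp} exhaust enough of $S_k^\alpha(V)$ to detect positivity on $H_\alpha$, which is exactly what that lemma asserts.

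Once the cone identification is established, the remaining assertions of the theorem are essentially bookkeeping. Since $\{\widetilde{\mathcal T}_{(\alpha,\beta)}^{\,n}\}$ is the cone family of a \emph{concrete} local operator system (a self-adjoint unital subspace of $C^*_{\mathcal E\times\mathcal G}(D\otimes F)$ containing $e_V\otimes e_W$), Remark \ref{concisabs} gives that $(V\otimes W,\{\{\mathcal T_{(\alpha,\beta)}^{n(\lmin)}\}_n\},e_V\otimes e_W)$ is an abstract local operator system, so condition (1) of Definition \ref{def_tensor} holds. Condition (2) — compatibility with the factor cones in both directions — follows because $\mathcal C_\alpha^n\otimes\mathcal D_\beta^m$, being spanned by products of positive elements, restricts to positive elements of $B(H_\alpha)\otimes B(K_\beta)\subseteq B(H_\alpha\otimes K_\beta)$; for the direction matching each $(\alpha,\beta)$ with itself, this is immediate, and the index set $\Omega\cong\Gamma\times\Lambda$ is chosen accordingly. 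Condition (3) is built into the very definition of $\mathcal T_{(\alpha,\beta)}^{n(\lmin)}$: if $\phi\in\mathrm{LUCP}(V,M_n)$ w.r.t.\ $\mathcal C_\alpha$ and $\psi\in\mathrm{LUCP}(W,M_m)$ w.r.t.\ $\mathcal D_\beta$, then by construction $((\phi\otimes\psi)(p_{ij}))\ge 0$ whenever $(p_{ij})\in\mathcal T_{(\alpha,\beta)}^{n(\lmin)}$, and unitality $\phi\otimes\psi(e_V\otimes e_W)=I$ is clear, so $\phi\otimes\psi\in\mathrm{LUCP}(V\otimes W,M_{nm})$ w.r.t.\ $\mathcal T_{(\alpha,\beta)}$. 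Finally the statement $D\otimes F\cong\bigcup_{(\alpha,\beta)}H_\alpha\otimes K_\beta$ is recorded as the description of the quantized domain underlying the embedding $i_V\otimes i_W$, which is exactly the domain used throughout the argument.
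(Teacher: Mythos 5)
Your proposal is correct and follows essentially the same route as the paper: both directions of the cone identification are argued exactly as in the paper's proof, namely via the vector-functional maps together with Lemmas \ref{lmin_lucp}, \ref{lmin_lucp2} and \ref{lmin_lucp3} for the inclusion $\mathcal{T}^{n(\lmin)}_{(\alpha,\beta)}\subseteq\widetilde{\mathcal T}^{\,n}_{(\alpha,\beta)}$, and via extension of the local u.c.p.\ maps to the ambient algebras (Arveson/Stinespring) for the reverse inclusion. Your explicit verification of conditions (1)--(3) of Definition \ref{def_tensor} is a harmless addition that the paper leaves implicit.
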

\begin{proof} 	Let $R \in \mathcal{T}_{(\alpha,\beta)}^{n(\lmin)}(V,W)$. We show that $(i_V\otimes i_W)^{(n)}(R) \in  (C_{\mathcal{E\times G}}^*(D\otimes F)^n)_{(\alpha,\beta)}^+.$ Say $S=(i_V\otimes i_W)^{(n)}(R),$ we need to show that $S|_{(H_\alpha\otimes K_\beta)^n}\geq 0$. Suppose that $S=\sum \limits_{r=1}^l X_r\otimes y_r$, where $X_r\in M_n(i_V(V))$ and $y_r\in i_W(W)$ for $r=1,2,..,l$. Let $\xi_s \in H_\alpha ^n $ and $\eta_s \in K_\beta$ for $s=1,2,..,k.$ and set $\xi=\sum\limits _{s=1}^k \xi_s \otimes \eta_s.$ Let $\Phi: M_n(i_V(V)) \to M_k$, where $E_\alpha=\{T\in C_\mathcal{E}^*(D): T|_{H_\alpha}\geq 0\}$ cones in $i_V(V)$; given by $(\Phi((X_{ij})_{i,,j}))_{s,t}$=$\langle (X_{ij})\xi_t,\xi_s\rangle_{s,t}$ and let $\psi:i_W(W) \to M_k$ where $F_\beta =\{T \in C_\mathcal{F}^*(G): T|_{K_\beta} \geq 0\}$ cones in $i_W(W)$; given by $(\psi(y))_{s,t}=\langle y \eta_t,\eta_s\rangle_{s,t} $.
	As in the proof of Lemma \ref{lmin_lucp}, $\Phi$ and $\psi$ are local completely positive w.r.t. cones $E_\alpha$ and $F_\beta,$ respectively. Since $S \in \mathcal{T}_{(\alpha,\beta)}^{n(min)}(i_V(V),i_W(W))$ say $S=(s_{ij})$. By Lemma \ref{lmin_lucp3}, $(\phi_0^{(n)}\otimes \psi_0)(s_{ij})\in M_{nk^2}^+$ for all $\phi_0 :i_V(V) \to M_k$ local unital completely positive maps w.r.t. $E_\alpha$ and all $\psi_0 : i_W(W) \to M_k $ local unital completely positive maps w.r.t $F_\beta$. By Lemma \ref{lmin_lucp2}, $(\Phi\otimes\psi(S))\geq 0$ . 
	It follows that $S|_{{(H_\alpha\otimes K_\beta)}^n}\in (B(H_\alpha \otimes K_\beta)^n)^+$ and thus claim is proved. We now prove the converse part. Let $T_{(\alpha,\beta)}^n$ is a cone in $M_n(V\otimes W)$ arising from the inclusion $i_V(V) \otimes i_W(W)$ into $C_{(\mathcal{E \times G})}^*(D\otimes F)$. We show that $T_{(\alpha,\beta)}^n\subseteq \mathcal{T}_{(\alpha,\beta)}^{n(min)}$. Suppose that $\phi: V \to M_m$ and $\psi : W \to M_k$ are local unital completely positive maps w.r.t $\mathcal{C}_\alpha$, $\mathcal{D}_\beta$ respectively. As $\phi$ and $\psi$ are local unital completely positive maps so there exist $\phi_\alpha:V_\alpha \to M_m$ and $\psi_\beta: W_\beta \to M_n$ unital completely positive map. Clearly, $\phi_\alpha$ and  $\phi_\alpha$ are unital complete order isomorphisms onto their range. Now by using Arveson's extension theorem and $C^*\text{-}$ algebra theory we can easily have that $(\phi\otimes \psi)^{(n)}(p_{ij}) \in M_{nkm}^+$
	Hence we have $T_{(\alpha,\beta)}^n$=$\mathcal{T}_{(\alpha,\beta)}^{n(\lmin)}$. So $\mathcal{T}_{(\alpha,\beta)}^{n(\lmin)}$ is local operator system structure on $V\otimes W$ with Archimedean matrix order unit $e_V\otimes e_W$.
	\end{proof}

\begin{definition}
	We call $\left(V \otimes W,\left\{\{\mathcal{T}_{(\alpha,\beta)}^{n(\lmin)}\}_{n=1}^\infty :(\alpha,\beta)\in \Gamma \times \Lambda\right\},e_V \otimes e_W \right)$ \emph{the minimal local tensor product} of $V$ and $W$ and denote it by $V \otimes _{\lmin} W$.
\end{definition}
\begin{corollary}
	Let $\left(V, \left\{\{\mathcal{C}_\alpha ^n\}_{n=1} ^{\infty}\; : \; \alpha \in \Gamma\right\}, e_V\right)$ and $\left(W, \left\{\{\mathcal{D}_\beta ^n\}_{n=1} ^{\infty}\; : \;\beta \in \Lambda \right\},e_W\right)$ be local operator systems then $\mathcal{T}_{(\alpha,\beta)}^{n(\lmin)}$ is the minimal local operator system tensor product structure of V and W.
\end{corollary}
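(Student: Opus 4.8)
The statement has two halves, and the plan is to dispatch them in turn: first, that $\lmin$ really is a local operator system tensor product in the sense of Definition~\ref{def_tensor} for every pair of local operator systems $V,W$; and second, that it is minimal, meaning that for any local operator system tensor product $l\tau$ on $V\otimes W$ the identity map $V\otimes_{l\tau}W\to V\otimes_{\lmin}W$ is local completely positive, i.e.\ $\mathcal{T}_{(\alpha,\beta)}^{n(l\tau)}\subseteq\mathcal{T}_{(\alpha,\beta)}^{n(\lmin)}$ for all $n\in\mathbb{N}$ and all $(\alpha,\beta)\in\Gamma\times\Lambda$.

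For the first half, Theorem~\ref{lmin_tp} already supplies condition~(1) of Definition~\ref{def_tensor}: the triple $\left(V\otimes W,\left\{\{\mathcal{T}_{(\alpha,\beta)}^{n(\lmin)}\}_{n=1}^\infty\right\},e_V\otimes e_W\right)$ is a local operator system with Archimedean matrix order unit $e_V\otimes e_W$. Condition~(2) is immediate from the construction of $\mathcal{T}^{\lmin}$: for any $\phi\in LUCP(V,M_k)$ w.r.t.\ $\mathcal{C}_\alpha$ and $\psi\in LUCP(W,M_m)$ w.r.t.\ $\mathcal{D}_\beta$, the image under $\phi\otimes\psi$ of a pure tensor $(v_{ij})\otimes(w_{kl})$ with $(v_{ij})\in\mathcal{C}_\alpha^n$ and $(w_{kl})\in\mathcal{D}_\beta^m$ is a tensor of positive matrices and hence positive, so $\mathcal{C}_\alpha^n\otimes\mathcal{D}_\beta^m\subseteq\mathcal{T}_{(\alpha,\beta)}^{nm(\lmin)}$; choosing $\gamma=(\alpha,\beta)$ via $\Omega\cong\Gamma\times\Lambda$ handles both containments demanded in~(2). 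Condition~(3) is likewise built into the definition: if $\phi\in LUCP(V,M_k)$ w.r.t.\ $\mathcal{C}_\alpha$ and $\psi\in LUCP(W,M_m)$ w.r.t.\ $\mathcal{D}_\beta$, then by definition every $(p_{ij})\in\mathcal{T}_{(\alpha,\beta)}^{l(\lmin)}$ satisfies $((\phi\otimes\psi)(p_{ij}))\in M_{lkm}^+$, and using Lemma~\ref{lmin_lucp3} to write $(\phi\otimes\psi)^{(l)}=\phi^{(l)}\otimes\psi$ identifies this with $(\phi\otimes\psi)^{(l)}((p_{ij}))$; together with unitality this gives $\phi\otimes\psi\in LUCP(V\otimes W,M_{km})$ w.r.t.\ $\mathcal{T}_{(\alpha,\beta)}^{(\lmin)}$. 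Since $V,W$ were arbitrary, $\lmin$ is a local operator system tensor product.

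For minimality, let $l\tau$ be an arbitrary local operator system tensor product on $V\otimes W$ with cones $\{\mathcal{T}_{(\alpha,\beta)}^{n(l\tau)}\}$. Fix $(\alpha,\beta)$ and $n$ and take $(p_{ij})\in\mathcal{T}_{(\alpha,\beta)}^{n(l\tau)}$. Let $\phi:V\to M_k$ and $\psi:W\to M_m$ be any local unital completely positive maps w.r.t.\ $\mathcal{C}_\alpha$ and $\mathcal{D}_\beta$. Property~(3) of Definition~\ref{def_tensor}, now applied to $l\tau$, says $\phi\otimes\psi\in LUCP(V\otimes W,M_{km})$ w.r.t.\ $\mathcal{T}_{(\alpha,\beta)}^{(l\tau)}$, so $(\phi\otimes\psi)^{(n)}((p_{ij}))\in M_{nkm}^+$; by Lemma~\ref{lmin_lucp3} this equals $((\phi\otimes\psi)(p_{ij}))_{ij}$. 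As $\phi$ and $\psi$ range over all such maps, this is exactly the defining condition for $(p_{ij})\in\mathcal{T}_{(\alpha,\beta)}^{n(\lmin)}$. Hence $\mathcal{T}_{(\alpha,\beta)}^{n(l\tau)}\subseteq\mathcal{T}_{(\alpha,\beta)}^{n(\lmin)}$ for every $n$ and every $(\alpha,\beta)$, so the identity map $V\otimes_{l\tau}W\to V\otimes_{\lmin}W$ is local completely positive. Combining the two halves yields the corollary.

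I do not anticipate a genuine obstacle: the argument is just an unwinding of Definition~\ref{def_tensor} against the construction of $\mathcal{T}^{\lmin}$, leaning only on Theorem~\ref{lmin_tp} and Lemma~\ref{lmin_lucp3}. The one point that merits a moment's care is the index bookkeeping hidden in $\Omega\cong\Gamma\times\Lambda$: one must check that the condition ``$V\otimes_{l\tau}W\to V\otimes_{\lmin}W$ is local completely positive'' genuinely produces the stated cone inclusions with \emph{matching} indices $(\alpha,\beta)$, rather than merely some index, and this is precisely what property~(3) of Definition~\ref{def_tensor} delivers.
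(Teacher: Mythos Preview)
Your proposal is correct and takes essentially the same approach as the paper: the minimality argument is exactly the paper's---apply condition~(3) of Definition~\ref{def_tensor} to an arbitrary tensor product $l\tau$ so that every $\phi\otimes\psi$ is local completely positive on $V\otimes_{l\tau}W$, which forces $\mathcal{T}_{(\alpha,\beta)}^{n(l\tau)}\subseteq\mathcal{T}_{(\alpha,\beta)}^{n(\lmin)}$. The paper's proof omits your first half (that $\lmin$ satisfies Definition~\ref{def_tensor}), treating it as already furnished by Theorem~\ref{lmin_tp} and the surrounding discussion; your explicit verification of conditions~(1)--(3) is a harmless addition, not a different route.
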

\begin{proof} 	Let  $T_{(\alpha,\beta)}^n$ be a local operator system tensor product on $V \otimes W$ and denote it by $V \otimes_{l\eta}W$. Let $\Phi:V \otimes_{l\eta}W \to V \otimes_{(\lmin)}W$ be the identity map. By using condition (3) in Definition \ref{def_tensor}, we have $T_{(\alpha,\beta)}^n\subseteq \mathcal{T}_{(\alpha,\beta)}^{n(\lmin)}$ for every $n\in \mathbb{N}$. Thus $\mathcal{T}_{(\alpha,\beta)}^{n(\lmin)}$ is the minimal structure.
\end{proof}

\begin{corollary}
	The mapping $\lmin$: $\mathcal{LO} \times \mathcal{LO} \to \mathcal{LO}$ sending $(V,W)$ to $V \otimes _{\lmin} W$ is an injective, associative, symmetric, functorial local operator system tensor product.
\end{corollary}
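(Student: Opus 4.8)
The plan is to verify each of the four properties — injectivity, associativity, symmetry, and functoriality — by exploiting the concrete realization of $\lmin$ established in Theorem \ref{lmin_tp}, namely that $V \otimes_{\lmin} W$ is exactly the local operator system structure induced on $V \otimes W$ by the embedding $i_V \otimes i_W : V \otimes W \to C^*_{\mathcal{E} \times \mathcal{G}}(D \otimes F)$, together with the fact (Section \ref{projective} and Proposition \ref{opsys_tp}) that $V = \varprojlim V_\alpha$, $W = \varprojlim W_\beta$, and $V \otimes_{\lmin} W = \varprojlim (V_\alpha \otimes_{\min} W_\beta)$. This lets me reduce each statement to the corresponding well-known property of the operator system minimal tensor product $\min$ from \cite{kavruk2011tensor}, passed through the projective limit. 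First I would record this reduction once: since a net of maps between projective limits is a local complete order isomorphism (resp. local u.c.p.\ map) precisely when it is induced levelwise by complete order isomorphisms (resp. u.c.p.\ maps) between the building operator systems, any identity or natural map that is levelwise a complete order isomorphism for $\min$ automatically has the analogous local property for $\lmin$.

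\textbf{Functoriality.} Given $\phi \in LUCP(V_1,V_2)$ and $\psi \in LUCP(W_1,W_2)$, by definition of local u.c.p.\ maps there are u.c.p.\ maps $\phi_\alpha : (V_1)_\alpha \to (V_2)_{\alpha'}$ and $\psi_\beta : (W_1)_\beta \to (W_2)_{\beta'}$ compatible with the connecting maps. Functoriality of $\min$ gives $\phi_\alpha \otimes_{\min} \psi_\beta \in UCP((V_1)_\alpha \otimes_{\min}(W_1)_\beta, (V_2)_{\alpha'} \otimes_{\min}(W_2)_{\beta'})$. Passing to the projective limit and using $V_i \otimes_{\lmin} W_i = \varprojlim (V_i)_\alpha \otimes_{\min}(W_i)_\beta$, we get that $\phi \otimes \psi$ is local u.c.p.\ with respect to $\lmin$, as required.

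\textbf{Symmetry and associativity.} The flip map $\theta : v \otimes w \mapsto w \otimes v$ is, at each level, the flip on $V_\alpha \otimes_{\min} W_\beta$, which is a complete order isomorphism onto $W_\beta \otimes_{\min} V_\alpha$; since the index set $\Gamma \times \Lambda$ is mapped bijectively onto $\Lambda \times \Gamma$ and the connecting maps intertwine the flips, $\theta$ extends to a local unital complete order isomorphism $V \otimes_{\lmin} W \cong W \otimes_{\lmin} V$. Associativity is handled identically: the canonical isomorphism $(U \otimes V)\otimes W \to U \otimes (V \otimes W)$ restricts, at the level indexed by $(\gamma,\alpha,\beta)$, to the associativity isomorphism for $\min$ of operator systems, which by \cite{kavruk2011tensor} is a complete order isomorphism; taking the projective limit over the (canonically identified) index sets gives the local complete order isomorphism $(U \otimes_{\lmin} V) \otimes_{\lmin} W \cong U \otimes_{\lmin}(V \otimes_{\lmin} W)$.

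\textbf{Injectivity.} Suppose $V_1 \subseteq V_2$ and $W_1 \subseteq W_2$ as local operator systems; then $(V_1)_\alpha \subseteq (V_2)_\alpha$ and $(W_1)_\beta \subseteq (W_2)_\beta$ as operator systems, and by injectivity of the operator system $\min$ the inclusion $(V_1)_\alpha \otimes_{\min}(W_1)_\beta \subseteq (V_2)_\alpha \otimes_{\min}(W_2)_\beta$ is a complete order isomorphism onto its range. Since $\mathcal{T}^{n(\lmin)}_{(\alpha,\beta)}$ is defined by testing against \emph{all} local u.c.p.\ maps into matrix algebras — and any such map on $V_1$ extends through $V_2$ by Arveson's extension theorem at each fixed level, exactly as in the converse part of the proof of Theorem \ref{lmin_tp} — an element of $M_n(V_1 \otimes W_1)$ lies in the $\lmin$-cone of $V_1 \otimes W_1$ iff it lies in the $\lmin$-cone of $V_2 \otimes W_2$. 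Passing to the projective limit yields that $V_1 \otimes_{\lmin} W_1 \hookrightarrow V_2 \otimes_{\lmin} W_2$ is a local complete order isomorphism onto its range. I expect injectivity to be the only part requiring genuine care: one must check that the extension of a local u.c.p.\ map respects the compatibility across the directed family (so that it really is local u.c.p.\ on the ambient space, not just u.c.p.\ at one level), and that no new cone elements are created — both of which follow by combining Lemma \ref{lmin_lucp} with the levelwise Arveson extension argument already used in Theorem \ref{lmin_tp}.
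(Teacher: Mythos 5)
Your proposal is correct and follows exactly the route the paper intends: the paper states this corollary without proof, leaving it as an immediate consequence of Theorem \ref{lmin_tp} (the concrete realization of $\lmin$), the identification $V\otimes_{\lmin}W=\underset{\longleftarrow}{\lim}\,V_\alpha\otimes_{\min}W_\beta$, and the corresponding properties of $\min$ in \cite{kavruk2011tensor}, which is precisely the reduction you carry out. Your extra care on injectivity (levelwise Arveson extension via Theorem \ref{1.3}, plus compatibility of the extensions across the directed family) is the right point to worry about and is handled correctly.
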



\begin{remark} $V\otimes _{\lmin} W \cong \underset{\longleftarrow}{\lim} V_\alpha \otimes _{\min} W_\beta$=$V\otimes_{\min_l} W;$ where $V=\underset{\longleftarrow}{\lim}V_\alpha$ and $W=\underset{\longleftarrow}{\lim}W_\beta$, and  $\min_l$ is the tensor product in the sense of Proposition \ref{opsys_tp}.  
	
\end{remark}
\	\begin{proposition}
	Let $V$ and $W$ be A.L.O.U spaces and tensor product $V\otimes W$ is equipped with the family of cones as $Q_{min}^{(\alpha,\beta)}=\{u\in V\otimes W: (f\otimes g)(u)\geq 0$ for all $f \in S^\alpha(V),g\in S^\beta(W) \}$. Then $LOMIN(V)\otimes_{lmin}LOMIN(W)=LOMIN(V\otimes W)$. 
\end{proposition}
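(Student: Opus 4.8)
The plan is to read both local operator system structures off from concrete realizations inside commutative Pro-$C^*$-algebras and then match them via Theorem \ref{lmin_tp}. Recall that $LOMIN(V)$ can be identified, through a local unital complete order isomorphism, with the image of $V$ under $\Phi\colon V\to C(\cup_\alpha S_\alpha(V_\alpha))$, $\Phi(v)|_{S_\alpha(V_\alpha)}(s)=s(v+N_\alpha)$, where $V_\alpha=V/N_\alpha$; likewise $LOMIN(W)$ sits inside the commutative Pro-$C^*$-algebra $C(\cup_\beta S_\beta(W_\beta))$. Here $S^\alpha(V)$, the set of states of $V$ relative to the cone $V_\alpha^+$, is exactly the state space $S(V_\alpha)$ of the quotient AOU space, and similarly for $W$. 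By \cite{inoue1972locally} each of these commutative Pro-$C^*$-algebras is some $C^*_{\mathcal{E}}(D)$, so these are embeddings of the kind to which Theorem \ref{lmin_tp} applies.

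First I would apply Theorem \ref{lmin_tp} to the embeddings $i_V\colon LOMIN(V)\hookrightarrow C(\cup_\alpha S_\alpha(V_\alpha))$ and $i_W\colon LOMIN(W)\hookrightarrow C(\cup_\beta S_\beta(W_\beta))$. It identifies $\{\mathcal{T}_{(\alpha,\beta)}^{n(\lmin)}\}$ with the local operator system structure induced on $V\otimes W$ by $i_V\otimes i_W$ into the spatial tensor product $C(\cup_\alpha S_\alpha(V_\alpha))\otimes_{\min}C(\cup_\beta S_\beta(W_\beta))$, which for commutative Pro-$C^*$-algebras is $C\bigl(\cup_{(\alpha,\beta)}S_\alpha(V_\alpha)\times S_\beta(W_\beta)\bigr)$. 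Since positivity in $M_n(C(X))$ is pointwise positivity, and since evaluation at a point $(f,g)$ of the function attached to $u\in V\otimes W$ is $(f\otimes g)(u)$, this gives, for $(u_{ij})\in M_n(V\otimes W)$,
\[
(u_{ij})\in\mathcal{T}_{(\alpha,\beta)}^{n(\lmin)}\iff \bigl((f\otimes g)(u_{ij})\bigr)_{ij}\in M_n^+\ \text{ for all } f\in S^\alpha(V),\ g\in S^\beta(W).
\]

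Second I would identify the right-hand condition with membership in $(\mathcal{C}_{(\alpha,\beta)}^{n})^{\min}(V\otimes W)$. By the theorem characterizing the cones $(\mathcal{C}^{n})^{\min}$, $(u_{ij})$ lies in $(\mathcal{C}_{(\alpha,\beta)}^{n})^{\min}(V\otimes W)$ iff $\bigl(s(u_{ij}+N_{(\alpha,\beta)})\bigr)_{ij}\in M_n^+$ for every state $s$ of $(V\otimes W)/N_{(\alpha,\beta)}$, with $N_{(\alpha,\beta)}=\langle Q_{min}^{(\alpha,\beta)}\cap -Q_{min}^{(\alpha,\beta)}\rangle$. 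It follows from the definition of $Q_{min}^{(\alpha,\beta)}$ (using that the states of an AOU space separate its points) that this quotient AOU space is $V_\alpha\otimes W_\beta$ with the tensor cone $Q(V_\alpha,W_\beta)$, and the bipolar theorem shows that its state space is the weak-$*$ closed convex hull of the product states $\{f\otimes g: f\in S(V_\alpha),\,g\in S(W_\beta)\}$. As $M_n^+$ is convex and weak-$*$ closed, testing against all states is equivalent to testing against product states, i.e.\ to the right-hand condition above. Hence $\mathcal{T}_{(\alpha,\beta)}^{n(\lmin)}=(\mathcal{C}_{(\alpha,\beta)}^{n})^{\min}(V\otimes W)$ for all $n$ and all $(\alpha,\beta)$; the Archimedean matrix order unit being $e_V\otimes e_W$ on both sides (and the case $n=1$ showing the ground cone on the left is $Q_{min}^{(\alpha,\beta)}$, so that the asserted identity is meaningful), we conclude $LOMIN(V)\otimes_{\lmin}LOMIN(W)=LOMIN(V\otimes W)$.

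I expect the main obstacle to be the bookkeeping in the first step: checking carefully that, for the chosen commutative realizations, the embedding furnished by Theorem \ref{lmin_tp} really is the canonical map $V\otimes W\to C\bigl(\cup_{(\alpha,\beta)}S_\alpha(V_\alpha)\times S_\beta(W_\beta)\bigr)$ and that its induced matrix cones at index $(\alpha,\beta)$ are the pointwise-positive ones, i.e.\ matching the abstract $\lmin$-construction with the concrete function-algebra picture. As an alternative one can route the argument through projective limits: using the Remark preceding this proposition together with $LOMIN(V)=\underset{\longleftarrow}{\lim}\,OMIN(V_\alpha)$ one gets $LOMIN(V)\otimes_{\lmin}LOMIN(W)\cong\underset{\longleftarrow}{\lim}\,OMIN(V_\alpha)\otimes_{\min}OMIN(W_\beta)$, and then the operator system identity $OMIN(V_\alpha)\otimes_{\min}OMIN(W_\beta)=OMIN(V_\alpha\otimes W_\beta)$ (see \cite{kavruk2011tensor}) together with $(V\otimes W,\{Q_{min}^{(\alpha,\beta)}\})=\underset{\longleftarrow}{\lim}\,(V_\alpha\otimes W_\beta)$ yields $\underset{\longleftarrow}{\lim}\,OMIN(V_\alpha\otimes W_\beta)=LOMIN(V\otimes W)$; in that route the delicate point is instead that forming $LOMIN$ commutes with projective limits and that the $(\alpha,\beta)$-quotient of $(V\otimes W,\{Q_{min}^{(\alpha,\beta)}\})$ is $V_\alpha\otimes W_\beta$ with its natural tensor cone.
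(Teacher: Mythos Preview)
Your argument is correct and shares the paper's starting point: both identify the $\lmin$ cones on $LOMIN(V)\otimes LOMIN(W)$ via the commutative embeddings $LOMIN(V)\subseteq C(\cup_\alpha S^\alpha(V))$ and $LOMIN(W)\subseteq C(\cup_\beta S^\beta(W))$, obtaining $(u_{ij})\in D_n^{(\alpha,\beta)}$ iff $((f\otimes g)(u_{ij}))\in M_n^+$ for all product states $(f,g)$. Where you diverge is in the inclusion $D_n^{(\alpha,\beta)}\subseteq Q_n^{(\alpha,\beta)}$. You invoke a bipolar argument: the state space of $(V\otimes W,Q_{min}^{(\alpha,\beta)})$ is the weak-$*$ closed convex hull of the product states, so positivity against all states reduces to product states. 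The paper instead bypasses this entirely by appealing to the universal minimality of $LOMIN$ (Corollary to Theorem~\ref{univlomin}): once one checks $D_1^{(\alpha,\beta)}=Q_{min}^{(\alpha,\beta)}$, any local operator system structure with these ground cones automatically has $D_n^{(\alpha,\beta)}\subseteq Q_n^{(\alpha,\beta)}$. This is shorter and avoids the functional-analytic detour through state-space geometry; your route is more explicit but requires justifying the closed-convex-hull claim. For the opposite inclusion $Q_n^{(\alpha,\beta)}\subseteq D_n^{(\alpha,\beta)}$, the paper computes exactly as you do, via the explicit description of $(\mathcal{C}^n)^{\min}$ in terms of scalar compressions. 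Your projective-limit alternative is a genuinely different third route not pursued in the paper.
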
 
\begin{proof} 	We have $LOMIN(V)\subseteq C(X)$ where $X=\cup S^\alpha(V)$. Similarly we have $LOMIN(W)\subseteq C(Y)$ where $Y=\cup S^\beta(W)$. By injectivity of $lmin$, we have that $LOMIN(V)\otimes_{lmin}LOMIN(W)$ is a local operator subsystem of $C(X)\otimes_{lmin}C(Y)$. Denote the matrix ordering on $LOMIN(V \otimes W)$ by $\{Q_n^{(\alpha,\beta)}\}$ and matrix ordering on $LOMIN(V)\otimes_{lmin}LOMIN(W)$ by $\{D_n^{(\alpha,\beta)}\}$. Since $LOMIN(V\otimes W)$ is the minimal local operator system structure $(V \otimes W,Q_{min}^{(\alpha,\beta)})$, we have that $D_n^{(\alpha,\beta)}\subseteq Q_n^{(\alpha,\beta)}$ for all $n\in \mathbb{N}$. Now we will show that reverse inclusion also holds. Suppose that $X=(x_{ij})\in Q_n^{(\alpha,\beta)}$, we have that $\sum_{i,j=1}^{n}\bar{\lambda_i}\lambda_j x_{i,j}\in Q_{min}^{(\alpha,\beta)}$ for all $\lambda_1,\lambda_2,....,\lambda_n\in \mathbb{C}$. Let $\lambda=(\lambda_1,...,\lambda_n)^t$ and we have that for $f\in S^\alpha(V)$ and $g\in S^\beta(W)$ , $<(f\otimes g)(x_{i,j}))_{i,j} \lambda, \lambda>=\sum_{i,j=1}^n\bar{\lambda_i}\lambda_j(f\otimes g)(x_{ij})\geq 0$. It implies that $X \in D_n^{(\alpha,\beta)}$. 
\end{proof}

\subsection{The maximal tensor product}
We now construct the maximal local operator system tensor product and prove that this is in fact maximal tensor product structure possible in the category of local operator systems.

Let $\left(V, \left\{\{\mathcal{C}_\alpha ^n\}_{n=1} ^{\infty}\; : \;\alpha \in \Gamma\right\}, e_V\right)$ and $\left(W, \left\{\{\mathcal{D}_\beta ^n\}_{n=1} ^{\infty}\; : \; \beta \in \Lambda \right\},e_W\right)$ be two local operator systems. For each $n\in \mathbb{N}$ and $(\alpha, \beta)\in \Gamma\times\Lambda,$ define
$$\mathcal{K}_{(\alpha,\beta)}^{n(max)}:= \{\alpha (P\otimes Q) \alpha^*\; : \; P\in \mathcal{C}_\alpha ^k \text{ and } Q \in \mathcal{D}_\beta ^m, \alpha\in M_{n,km}, k,m\in \mathbb{N}\}.$$ 

\begin{lemma}\label{lmax_comp}
	For local operator systems $V$ and $W$ be local operator systems and $\{ T_{\gamma}^n: \gamma \in \Omega\}$ be a compatible collection of cones, where $T_{\gamma}^n\subseteq M_n(V\otimes W)$ satisfies (2) in Definition \ref{def_tensor}. Then for each $\gamma \in \Omega$ there exist $(\alpha,\beta) \in \Gamma \times \Lambda$ such that $\mathcal{K}_{(\alpha,\beta)}^{n(\mathrm{lmax})} \subseteq T_{\gamma}^n$ for each $n \in \mathbb{N}$. 
\end{lemma}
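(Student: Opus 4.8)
The plan is to use nothing beyond the two standing hypotheses on the family $\{T_\gamma^n\}$: that it satisfies property (2) of Definition~\ref{def_tensor}, and that it is a compatible matrix ordering, i.e.\ $X^*T_\gamma^{k}X\subseteq T_\gamma^{n}$ for every scalar matrix $X\in M_{k,n}$. First I would fix $\gamma\in\Omega$ and feed it into property~(2): this yields $\alpha\in\Gamma$ and $\beta\in\Lambda$ with $\mathcal{C}_\alpha^{k}\otimes\mathcal{D}_\beta^{m}\subseteq T_\gamma^{km}$ for all $k,m\in\mathbb{N}$, where throughout one identifies $M_k(V)\otimes M_m(W)$ with $M_{km}(V\otimes W)$ via the canonical shuffle $M_k\otimes M_m\cong M_{km}$. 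I claim this same pair $(\alpha,\beta)$ witnesses $\mathcal{K}_{(\alpha,\beta)}^{n(\mathrm{lmax})}\subseteq T_\gamma^{n}$ for every $n$.

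Next I would simply unwind the definition of $\mathcal{K}_{(\alpha,\beta)}^{n(\mathrm{lmax})}$. A generic element has the form $R(P\otimes Q)R^*$ with $P\in\mathcal{C}_\alpha^{k}$, $Q\in\mathcal{D}_\beta^{m}$, and $R\in M_{n,km}$ (I rename to $R$ the matrix called $\alpha$ in the definition, to avoid the clash with the index $\alpha\in\Gamma$). By the previous paragraph $P\otimes Q\in T_\gamma^{km}$, and applying compatibility of $\{T_\gamma^n\}$ with $X:=R^*\in M_{km,n}$ gives $R(P\otimes Q)R^*=X^*(P\otimes Q)X\in T_\gamma^{n}$. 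Since $T_\gamma^{n}$ is a cone, it is also closed under the finite sums that may appear in $\mathcal{K}_{(\alpha,\beta)}^{n(\mathrm{lmax})}$, so $\mathcal{K}_{(\alpha,\beta)}^{n(\mathrm{lmax})}\subseteq T_\gamma^{n}$ for each $n$, which is the assertion. This is the local analogue of the standard fact that $\max$ dominates any compatible cone family sending positives-times-positives into positives (cf.\ \cite[Lemma~5.7]{kavruk2011tensor}).

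The only point that needs genuine care — and it is bookkeeping rather than a real obstacle — is the role of the shuffle isomorphism. One must check that under $M_k\otimes M_m\cong M_{km}$ the expression ``$\mathcal{C}_\alpha^{k}\otimes\mathcal{D}_\beta^{m}$'' appearing in property~(2) is precisely the set of simple tensors $\{P\otimes Q:P\in\mathcal{C}_\alpha^{k},\,Q\in\mathcal{D}_\beta^{m}\}$ viewed inside $M_{km}(V\otimes W)$, so that ``$P\otimes Q\in T_\gamma^{km}$'' is literally what (2) asserts; and that this identification intertwines the congruences $Z\mapsto RZR^*$, which is exactly what makes the compatibility step legitimate in the form used above. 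Both are the same routine verifications one performs in the operator system setting, so I expect no real difficulty.
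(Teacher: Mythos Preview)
Your argument is correct and is essentially the same as the paper's: fix $\gamma$, invoke condition~(2) to obtain $(\alpha,\beta)$ with $P\otimes Q\in T_\gamma^{km}$ whenever $P\in\mathcal{C}_\alpha^{k}$ and $Q\in\mathcal{D}_\beta^{m}$, then apply compatibility to push $\lambda(P\otimes Q)\lambda^*$ into $T_\gamma^{n}$. Your extra care with the shuffle identification and the renaming of the scalar matrix are helpful clarifications; the remark about finite sums is unnecessary since by definition every element of $\mathcal{K}_{(\alpha,\beta)}^{n(\mathrm{lmax})}$ already has the form $R(P\otimes Q)R^*$, but it does no harm.
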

\begin{proof} 	By condition (2), we have for any $\gamma$ there exist $\alpha$, $\beta$ s.t. $P \otimes Q \in T_{\gamma}^{km}$ where $P\in \mathcal{C}_\alpha ^k$ and $Q \in \mathcal{D}_\beta ^m$ for any $k,m \in \mathbb{N}$.
	The compatibility of $\{T_{\gamma}^n\}$ implies that $\lambda(P\otimes Q) \lambda ^* \in T_{\gamma}^n$ for every $\lambda \in M_{n,km}$. Thus $\mathcal{K}_{(\alpha,\beta)}^{n(\mathrm{lmax}} \subseteq T_{\gamma}^n$.
	\end{proof}

\begin{proposition}
	For any two local operator systems $V$ and $W$, the family $\left\{\{K_{(\alpha,\beta)}^{n(\mathrm{lmax})}\}_{n=1}^\infty\; : \; (\alpha,\beta)\in \Gamma\times \Lambda \right\}$ is a matrix ordering on $V\otimes W$ with order unit $e_V\otimes e_W$. 
\end{proposition}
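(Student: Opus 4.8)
The plan is to verify the three defining requirements of a local matrix ordering (from the Definition of local matrix ordering) for the family $\{\{\mathcal{K}_{(\alpha,\beta)}^{n(\mathrm{lmax})}\}_{n=1}^\infty : (\alpha,\beta)\in\Gamma\times\Lambda\}$, together with the statement that $e_V\otimes e_W$ is a matrix order unit. First I would observe that each $\mathcal{K}_{(\alpha,\beta)}^{n(\mathrm{lmax})}$ is a cone: closure under positive scalars is clear by absorbing the scalar into $\alpha$, and closure under addition follows by the standard block trick, namely if $\alpha_1(P_1\otimes Q_1)\alpha_1^*$ and $\alpha_2(P_2\otimes Q_2)\alpha_2^*$ are two such elements with $P_i\in\mathcal{C}_\alpha^{k_i}$, $Q_i\in\mathcal{D}_\beta^{m_i}$, then taking the block row matrix $\gamma=[\alpha_1\ \alpha_2]\in M_{n,k_1m_1+k_2m_2}$ and $P=P_1\oplus P_2$, $Q=Q_1\oplus Q_2$ realizes the sum as $\gamma((P\oplus\text{stuff})\otimes\cdots)\gamma^*$ — here one has to be slightly careful because $P\oplus P'$ tensored with $Q\oplus Q'$ is not literally $(P\otimes Q)\oplus(P'\otimes Q')$, but it contains it as a compression, so after conjugating by a suitable coordinate-projection matrix the sum does land in $\mathcal{K}_{(\alpha,\beta)}^{n(\mathrm{lmax})}$. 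That $\mathcal{K}_{(\alpha,\beta)}^{n(\mathrm{lmax})}\subseteq M_n(V\otimes W)_h$ is immediate since $P\otimes Q$ is selfadjoint and conjugation preserves selfadjointness.

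Next I would check compatibility, i.e. $X^*\mathcal{K}_{(\alpha,\beta)}^{n(\mathrm{lmax})}X\subseteq\mathcal{K}_{(\alpha,\beta)}^{m(\mathrm{lmax})}$ for $X\in M_{n,m}$; this is trivial from the definition, since $X^*\big(\alpha(P\otimes Q)\alpha^*\big)X=(X^*\alpha)(P\otimes Q)(X^*\alpha)^*$ and $X^*\alpha\in M_{m,km}$. Then, to see that $\{\mathcal{K}_{(\alpha,\beta)}^{n(\mathrm{lmax})}\}$ is downward filtered and that $\bigcap_{(\alpha,\beta)}(\mathcal{K}_{(\alpha,\beta)}^{n(\mathrm{lmax})}\cap -\mathcal{K}_{(\alpha,\beta)}^{n(\mathrm{lmax})})=\{0\}$, I would compare with the $\lmin$ cones: by Lemma~\ref{lmax_comp} (applied, say, with $T_\gamma^n=\mathcal{T}_{(\alpha,\beta)}^{n(\lmin)}$, which is a compatible collection satisfying (2) by Theorem~\ref{lmin_tp}) we get $\mathcal{K}_{(\alpha,\beta)}^{n(\mathrm{lmax})}\subseteq\mathcal{T}_{(\alpha,\beta)}^{n(\lmin)}$, and the separation property for $\lmin$ (which holds since $V\otimes_{\lmin}W$ is a local operator system) forces the separation property for the smaller $\lmax$ cones. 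Downward filtering of $\{\mathcal{K}_{(\alpha,\beta)}^{n(\mathrm{lmax})}\}$ follows from downward filtering of $\{\mathcal{C}_\alpha^n\}$ and $\{\mathcal{D}_\beta^n\}$: given $(\alpha_1,\beta_1),(\alpha_2,\beta_2)$, pick $\alpha_0\le\alpha_1,\alpha_2$ and $\beta_0\le\beta_1,\beta_2$, and then $\mathcal{C}_{\alpha_0}^k\subseteq\mathcal{C}_{\alpha_i}^k$, $\mathcal{D}_{\beta_0}^m\subseteq\mathcal{D}_{\beta_i}^m$ give the corresponding containment of the $\mathcal{K}$-cones.

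Finally, for the matrix order unit claim I would fix $n$ and show $(e_V\otimes e_W)_n$ is an order unit for $(M_n(V\otimes W),\{\mathcal{K}_{(\alpha,\beta)}^{n(\mathrm{lmax})}\})$: given a selfadjoint $u\in M_n(V\otimes W)$ and $(\alpha,\beta)$, write $u$ as a finite sum of elementary tensors $a_r\otimes v_r\otimes w_r$ with $v_r\in V_h$, $w_r\in W_h$; since $e_V$ is a local matrix order unit for $V$ there is $s>0$ with $s(e_V)_{k}\pm v_r\,'$-type matrices in $\mathcal{C}_\alpha$, similarly for $W$, and then a suitable large multiple $r_{(\alpha,\beta)}(e_V\otimes e_W)_n - u$ can be exhibited in $\mathcal{K}_{(\alpha,\beta)}^{n(\mathrm{lmax})}$ by writing it as a conjugate of a direct sum of the relevant positive blocks; here one uses $e_V\otimes e_W$ as the order unit of the tensor product of the two ordered spaces and the fact that $a\otimes v\otimes w$ with $a\in M_n^+$ already sits inside $\mathcal{K}_{(\alpha,\beta)}^{n(\mathrm{lmax})}$ when $v\geq_\alpha 0$, $w\geq_\beta 0$. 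I expect the main obstacle to be the bookkeeping in the additivity-of-cones step and in producing the explicit dilation matrix $\alpha$ witnessing $r_{(\alpha,\beta)}(e_V\otimes e_W)_n - u\in\mathcal{K}_{(\alpha,\beta)}^{n(\mathrm{lmax})}$; the conceptual content is routine once one has the block/compression manipulations set up, and note we do not claim $e_V\otimes e_W$ is \emph{Archimedean} here — that is exactly the point deferred to the Archimedeanization producing $\mathrm{LOMAX}$.
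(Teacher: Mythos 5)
Your proposal is correct and follows essentially the same route as the paper: the paper's proof consists of exactly your compatibility observation $X^*\mathcal{K}_{(\alpha,\beta)}^{n(\mathrm{lmax})}X\subseteq \mathcal{K}_{(\alpha,\beta)}^{m(\mathrm{lmax})}$ and declares the remaining verifications immediate, while you supply the routine details (cone axioms via the block/compression trick, separation by comparison with the $\mathrm{lmin}$ cones, filtering, and the order-unit estimate) that the paper omits. Your closing remark that Archimedeanity is not claimed here but deferred to the Archimedeanization defining $\mathrm{LOMAX}$ matches the paper's subsequent definition.
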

\begin{proof} 	Since for any $X\in M_{n,m}$, $X^*K_{(\alpha,\beta)}^{n(\mathrm{lmax})}X\subseteq K_{(\alpha,\beta)}^{m(\mathrm{lmax})}$ proof follows immediately.
\end{proof}

\begin{definition}
	Let $\{\mathcal{T}_{(\alpha,\beta)}^{n(\mathrm{lmax})}\}_{n=1}^\infty$ be the Archimedeanization of the matrix ordering $\{K_{(\alpha,\beta)}^{n(\mathrm{lmax})}\}_{n=1}^\infty$ for local operator systems $V$ and $W$, we call the 
	local operator system $\left(V \otimes W,\left\{\{\mathcal{T}_{(\alpha,\beta)}^{n(\mathrm{lmax})}\}_{n=1}^\infty :(\alpha,\beta)\in \Gamma \times \Lambda\right\},e_V \otimes e_W \right)$ the maximal local operator system tensor product of $V $ and $W$ and denote it by $V\otimes _{\mathrm{lmax}}W$.
\end{definition} 
\begin{proposition}
	Let $V$ and $W$ be local operator systems. Then $V\otimes _{\mathrm{lmax}}W$ is local operator system tensor product. Moreover, if $l\tau$ is a local operator system tensor product structure on $V\otimes W$, then $\mathrm{lmax}$ is larger than $l\tau$. 
\end{proposition}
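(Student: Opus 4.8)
The plan is to verify the three conditions of Definition~\ref{def_tensor} for the family $\bigl\{\{\mathcal{T}_{(\alpha,\beta)}^{n(\mathrm{lmax})}\}_{n=1}^{\infty}:(\alpha,\beta)\in\Gamma\times\Lambda\bigr\}$, and then to extract the maximality statement directly from Lemma~\ref{lmax_comp} together with the Archimedean property of the order unit.

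For condition (1), the preceding proposition already gives that $\{\mathcal{K}_{(\alpha,\beta)}^{n(\mathrm{lmax})}\}$ is a compatible matrix ordering on $V\otimes W$ with matrix order unit $e_V\otimes e_W$. Passing to its Archimedeanization $\{\mathcal{T}_{(\alpha,\beta)}^{n(\mathrm{lmax})}\}$ (cf.\ Proposition~\ref{closure} and \cite{2009vector}) enlarges the cones while preserving matrix compatibility and turns $e_V\otimes e_W$ into an Archimedean matrix order unit; the one point to check separately is that the enlarged family still separates $M_n(V\otimes W)$, i.e.\ $\bigcap_{(\alpha,\beta)}\bigl(\mathcal{T}_{(\alpha,\beta)}^{n(\mathrm{lmax})}\cap-\mathcal{T}_{(\alpha,\beta)}^{n(\mathrm{lmax})}\bigr)=\{0\}$. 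I would obtain this by pairing with product functionals $\phi\otimes\psi$, where $\phi\colon V\to\mathbb{C}$ and $\psi\colon W\to\mathbb{C}$ are local unital positive functionals with respect to $\mathcal{C}_\alpha$, resp.\ $\mathcal{D}_\beta$: every such $\phi\otimes\psi$ is nonnegative on $\mathcal{K}_{(\alpha,\beta)}^{n(\mathrm{lmax})}$, hence on the enlarged cone, and Theorem~\ref{representation} together with Proposition~\ref{6} supplies enough such $\phi$ and $\psi$ to separate the points of $V$, resp.\ $W$, hence enough $\phi\otimes\psi$ to separate $M_n(V\otimes W)$.

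Condition (2) is immediate: for $P\in\mathcal{C}_\alpha^{n}$ and $Q\in\mathcal{D}_\beta^{m}$ one has $P\otimes Q=I_{nm}(P\otimes Q)I_{nm}^{*}\in\mathcal{K}_{(\alpha,\beta)}^{nm(\mathrm{lmax})}\subseteq\mathcal{T}_{(\alpha,\beta)}^{nm(\mathrm{lmax})}$, and since the index set is $\Gamma\times\Lambda$ both halves of (2) hold with $\gamma=(\alpha,\beta)$. For condition (3), let $\phi\in LUCP(V,M_n)$ with respect to $\mathcal{C}_\alpha$ and $\psi\in LUCP(W,M_m)$ with respect to $\mathcal{D}_\beta$. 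On a generator $\lambda(P\otimes Q)\lambda^{*}\in\mathcal{K}_{(\alpha,\beta)}^{N(\mathrm{lmax})}$ (with $P\in\mathcal{C}_\alpha^{k}$, $Q\in\mathcal{D}_\beta^{l}$, $\lambda\in M_{N,kl}$), after the canonical shuffle $M_N\otimes M_n\otimes M_m\cong M_{Nnm}$ one gets $(\phi\otimes\psi)^{(N)}\bigl(\lambda(P\otimes Q)\lambda^{*}\bigr)=\widetilde{\lambda}\,\bigl(\phi^{(k)}(P)\otimes\psi^{(l)}(Q)\bigr)\widetilde{\lambda}^{*}\geq 0$, since $\phi^{(k)}(P)\geq 0$ and $\psi^{(l)}(Q)\geq 0$; thus $(\phi\otimes\psi)^{(N)}$ carries $\mathcal{K}_{(\alpha,\beta)}^{N(\mathrm{lmax})}$ into $M_{Nnm}^{+}$. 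Because $(\phi\otimes\psi)(e_V\otimes e_W)=I_n\otimes I_m=I_{nm}$, for $u\in\mathcal{T}_{(\alpha,\beta)}^{N(\mathrm{lmax})}$ we obtain $rI_{Nnm}+(\phi\otimes\psi)^{(N)}(u)\geq 0$ for every $r>0$, and the genuine Archimedean property of $M_{Nnm}$ forces $(\phi\otimes\psi)^{(N)}(u)\geq 0$; the ``local zero'' requirement is automatic, as any $u$ with $\pm u\in\mathcal{T}_{(\alpha,\beta)}^{N(\mathrm{lmax})}$ is sent into $M_{Nnm}^{+}\cap(-M_{Nnm}^{+})=\{0\}$. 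Hence $\phi\otimes\psi\in LUCP(V\otimes W,M_{nm})$ with respect to $\mathcal{T}_{(\alpha,\beta)}^{\mathrm{lmax}}$, so $\mathrm{lmax}$ is a local operator system tensor product.

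For the maximality, let $l\tau$ be any local operator system tensor product structure on $V\otimes W$, with cones $\{\mathcal{T}_\gamma^{n}:\gamma\in\Omega\}$; by Definition~\ref{def_tensor} this is a compatible collection of cones satisfying (2), so Lemma~\ref{lmax_comp} provides, for each $\gamma\in\Omega$, a pair $(\alpha,\beta)\in\Gamma\times\Lambda$ with $\mathcal{K}_{(\alpha,\beta)}^{n(\mathrm{lmax})}\subseteq\mathcal{T}_\gamma^{n}$ for all $n$. If $u\in\mathcal{T}_{(\alpha,\beta)}^{n(\mathrm{lmax})}$, then $r\,(e_V\otimes e_W)_n+u\in\mathcal{K}_{(\alpha,\beta)}^{n(\mathrm{lmax})}\subseteq\mathcal{T}_\gamma^{n}$ for all $r>0$, and since $e_V\otimes e_W$ is an Archimedean matrix order unit for the $l\tau$-structure it follows that $u\in\mathcal{T}_\gamma^{n}$; hence $\mathcal{T}_{(\alpha,\beta)}^{n(\mathrm{lmax})}\subseteq\mathcal{T}_\gamma^{n}$ for all $n$, which says exactly that the identity $V\otimes_{\mathrm{lmax}}W\to V\otimes_{l\tau}W$ is local completely positive, i.e.\ $\mathrm{lmax}$ is larger than $l\tau$. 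I expect the bookkeeping in condition (3) --- the amplification $\widetilde{\lambda}$ and the shuffle $M_N\otimes M_n\otimes M_m\cong M_{Nnm}$ --- to be the most tedious step, but it is entirely parallel to the operator system case of \cite{kavruk2011tensor}; the only genuinely local input is the separation check in condition (1), which is where I would be most careful to ensure that the supply of local positive functionals coming from Theorem~\ref{representation} is rich enough.
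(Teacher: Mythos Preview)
Your argument is correct, but your route to condition~(3) differs from the paper's. The paper does not compute $(\phi\otimes\psi)^{(N)}$ on generators of $\mathcal{K}_{(\alpha,\beta)}^{N(\mathrm{lmax})}$ at all; instead it observes that $\mathcal{T}_{(\alpha,\beta)}^{n(\mathrm{lmax})}\subseteq\mathcal{T}_{(\alpha,\beta)}^{n(\mathrm{lmin})}$ and invokes the fact that $\mathrm{lmin}$ already satisfies property~(3) (indeed, the $\mathrm{lmin}$ cones are \emph{defined} so that every such $\phi\otimes\psi$ is positive on them). Your direct verification is more self-contained, since it does not rely on the prior construction of $\mathrm{lmin}$; the paper's shortcut is shorter but presupposes Theorem~\ref{lmin_tp}. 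In fact the inclusion $\mathcal{T}^{\mathrm{lmax}}\subseteq\mathcal{T}^{\mathrm{lmin}}$ is itself proved by exactly the generator computation you wrote down (or equivalently by Lemma~\ref{lmax_comp} applied to $l\tau=\mathrm{lmin}$), so the two arguments are doing the same work in different packaging. You are also more careful than the paper on two points it leaves implicit: the separation property $\bigcap_{(\alpha,\beta)}(\mathcal{T}_{(\alpha,\beta)}^{n(\mathrm{lmax})}\cap-\mathcal{T}_{(\alpha,\beta)}^{n(\mathrm{lmax})})=\{0\}$ in condition~(1), and the passage from $\mathcal{K}_{(\alpha,\beta)}^{n(\mathrm{lmax})}\subseteq\mathcal{T}_\gamma^{n}$ to $\mathcal{T}_{(\alpha,\beta)}^{n(\mathrm{lmax})}\subseteq\mathcal{T}_\gamma^{n}$ via the Archimedean property in the maximality statement.
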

\begin{proof} 	Let $V$ and $W$ be local operator system. By its definition, the family  $\{\mathcal{T}_{(\alpha,\beta)}^{n(\mathrm{lmax})}\}_{n=1}^\infty$ satisfies property (1) and (2) of Definition \ref{def_tensor}. Since $\mathcal{T}_{(\alpha,\beta)}^{n(\mathrm{lmax})}\subseteq \mathcal{T}_{(\alpha,\beta)}^{n(\lmin)}$ it follows that $V\otimes_{\mathrm{lmax}} W$ satisfies property (3). In fact, $\mathrm{lmax}$ is maximal local operator system tensor product follows from Lemma \ref{lmax_comp}.
\end{proof}

Following now follows immediately \cite[theorem 5.5]{kavruk2011tensor}.
\begin{theorem}
	The mapping $\mathrm{lmax}:\mathcal{LO} \times \mathcal{LO} \to \mathcal{LO}$ sending $(V,W)$ to $V\otimes _{\mathrm{lmax}} W$ is a symmetric, associative, functorial local operator system tensor product. 
\end{theorem}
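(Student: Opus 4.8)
The plan is to follow the template of \cite[Theorem 5.5]{kavruk2011tensor}, now that the preceding proposition has established that $\lmax$ is a local operator system tensor product and that it dominates every other such structure (via Lemma \ref{lmax_comp}); it remains only to verify the three structural properties. The common mechanism is that each of the relevant vector-space maps --- a tensor of two $LUCP$ maps, the flip $v\otimes w\mapsto w\otimes v$, and the canonical reassociation --- has a transparent effect on the \emph{raw} generating cones $\mathcal{K}_{(\alpha,\beta)}^{n(\lmax)}=\{\gamma(P\otimes Q)\gamma^*\}$, and that this effect automatically passes to the Archimedeanized cones $\mathcal{T}_{(\alpha,\beta)}^{n(\lmax)}$: by Proposition \ref{closure} the latter are the order-seminorm closures of the former, so if a \emph{unital} linear map $L$ sends one raw cone into another then, whenever $r\,(\text{unit})+v$ lies in the first raw cone for all $r>0$, $r\,(\text{unit})+L(v)$ lies in the second for all $r>0$, i.e. $L$ carries the Archimedeanization into the Archimedeanization.

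For functoriality, let $\phi\in LUCP(V_1,V_2)$ and $\psi\in LUCP(W_1,W_2)$. Fixing a target index $(\alpha_2,\beta_2)$, local complete positivity of $\phi$ and $\psi$ yields source indices $\alpha_1,\beta_1$ with $\phi^{(k)}(\mathcal{C}_{\alpha_1}^k)\subseteq\mathcal{C}_{\alpha_2}^k$ and $\psi^{(m)}(\mathcal{D}_{\beta_1}^m)\subseteq\mathcal{D}_{\beta_2}^m$ for all $k,m$. Since the amplifications of $\phi\otimes\psi$ intertwine with conjugation by scalar rectangular matrices and satisfy $(\phi\otimes\psi)^{(km)}(P\otimes Q)=\phi^{(k)}(P)\otimes\psi^{(m)}(Q)$ (as in Lemma \ref{lmin_lucp3}), one gets $(\phi\otimes\psi)^{(n)}\big(\gamma(P\otimes Q)\gamma^*\big)=\gamma\big(\phi^{(k)}(P)\otimes\psi^{(m)}(Q)\big)\gamma^*$, so $(\phi\otimes\psi)^{(n)}(\mathcal{K}_{(\alpha_1,\beta_1)}^{n(\lmax)})\subseteq\mathcal{K}_{(\alpha_2,\beta_2)}^{n(\lmax)}$. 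Unitality gives $(\phi\otimes\psi)(e_{V_1}\otimes e_{W_1})=e_{V_2}\otimes e_{W_2}$, and the closure step promotes this to $(\phi\otimes\psi)^{(n)}(\mathcal{T}_{(\alpha_1,\beta_1)}^{n(\lmax)})\subseteq\mathcal{T}_{(\alpha_2,\beta_2)}^{n(\lmax)}$, i.e. $\phi\otimes\psi\in LUCP(V_1\otimes_{\lmax}W_1,V_2\otimes_{\lmax}W_2)$. Symmetry is the same argument applied to the flip: $P\otimes Q\mapsto U(Q\otimes P)U^*$ for the canonical shuffle unitary $U\in M_{km}$, so $\mathcal{K}_{(\alpha,\beta)}^{n(\lmax)}(V,W)$ is carried bijectively onto $\mathcal{K}_{(\beta,\alpha)}^{n(\lmax)}(W,V)$, the order unit is preserved, and passing to closures shows the flip is a local complete order isomorphism.

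Associativity is the step I expect to be the real obstacle. Writing the raw cone of $(U\otimes_{\lmax}V)\otimes_{\lmax}W$ as iterated conjugates $\mu\big((\lambda(P\otimes Q)\lambda^*)\otimes S\big)\mu^*$ and trying to match it against $\nu(P\otimes Q\otimes S)\nu^*$ runs into the fact that forming $U\otimes_{\lmax}V$ already replaces $\mathcal{K}^{(\lmax)}(U,V)$ by its closure $\mathcal{T}^{(\lmax)}(U,V)$, so the ``outer'' raw cone of the iterated product is built from a closure rather than from the tensors $P\otimes Q$ themselves, and one must argue that the two ``associated'' raw cones on $U\otimes V\otimes W$ have the same closure. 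I would instead establish a universal property of $\lmax$ --- a local version of the statement that $LUCP$ maps $X\otimes_{\lmax}Y\to Z$ correspond exactly to jointly locally completely positive bilinear maps $X\times Y\to Z$ --- and then observe that both $(U\otimes_{\lmax}V)\otimes_{\lmax}W$ and $U\otimes_{\lmax}(V\otimes_{\lmax}W)$ co-represent the functor of jointly locally completely positive trilinear maps on $U\times V\times W$, whence the canonical vector-space identification between them is a local complete order isomorphism. Alternatively, once one checks that Archimedeanization is compatible with the relevant projective limits, one can identify $V\otimes_{\lmax}W$ with $\underset{\longleftarrow}{\lim}\,V_\alpha\otimes_{\max}W_\beta$ as in Proposition \ref{opsys_tp} and transport all three properties from the corresponding facts about $\max$ for operator systems; in either route the only nontrivial bookkeeping is the interplay between the Archimedeanization closures and the (multi)linear maps, and everything else is formal.
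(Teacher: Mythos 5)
Your proposal is correct and follows essentially the route the paper intends: the paper's entire proof is the single remark that the result ``follows immediately'' from \cite[Theorem 5.5]{kavruk2011tensor}, and your argument is precisely the adaptation of that proof to the local setting, handling the raw cones $\mathcal{K}^{n(\lmax)}_{(\alpha,\beta)}$ first and transporting the conclusion to the Archimedeanized cones via unitality. Your identification of associativity as the only delicate point (since the iterated product Archimedeanizes the inner factor before forming the outer raw cone) and your two proposed resolutions --- a universal property for jointly locally completely positive bilinear maps, or the projective-limit identification $V\otimes_{\lmax}W\cong\underset{\longleftarrow}{\lim}\,V_\alpha\otimes_{\max}W_\beta$, which the paper itself records in the remark immediately following the theorem --- are both sound and supply the details the paper omits.
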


\begin{remark}
	Let $V$ and $W$ be two local operator systems s.t. $V=\underset{\longleftarrow}{\lim}V_\alpha$ and $W=\underset{\longleftarrow}{\lim}W_\beta$. Then we have $V\otimes _{\mathrm{lmax}} W \cong \underset{\longleftarrow}{\lim} V_\alpha \otimes _{max} W_\beta$=$V\otimes_{max_l} W$.
	Define $\phi : V\otimes _{\mathrm{lmax}} W \to \underset{\longleftarrow}{\lim} V_\alpha \otimes _{max} W_\beta$ by $\phi((v_\alpha)\otimes (w_\beta))=(v_\alpha \otimes w_\beta)$. So we have here that $\mathcal{T}_{(\alpha,\beta)}^{n(max_l)}\subseteq \mathcal{T}_{(\alpha,\beta)}^{n(l max))}$ as let $p \in \mathcal{T}_{(\alpha,\beta)}^{n(max_l)} $ so we have $(p)_{(\alpha,\beta)}\in V_\alpha \otimes_{max} W_\beta$ and by definition of operator system max tensor product, there exist $\lambda \in M_{mk,n}$ s.t. $(p)_{(\alpha,\beta)} =\lambda^* (v_\alpha \otimes w_\beta)\lambda $ where $v_\alpha \in (V_\alpha)^+_{m}$ and $w_\beta \in (W_\beta)^+_{k}$ so we have $p=\lambda^* ((v_\alpha) \otimes (w_\beta))\lambda$ where $v_\alpha \in (V_\alpha)^+_{m}$ and $w_\beta \in (W_\beta)^+_{k}$. Hence, we have $p \in \mathcal{T}_{(\alpha,\beta)}^{n(l max))} $. On the other hand, we have $\mathcal{T}_{(\alpha,\beta)}^{n(\mathrm{lmax})}\subseteq \mathcal{T}_{(\alpha,\beta)}^{n(max_l))}$ by using maximality of $\mathcal{T}_{(\alpha,\beta)}^{n(\mathrm{lmax})}$.
\end{remark}
If $X$ and $Y$ are local operator spaces, then we let $X \otimes_{\wedge} Y$ denote the local operator space projective tensor product. We refer the reader to \cite[Section 3]{webster1997local} for more details of this tensor product.
\begin{theorem}
	Let $X$ and $Y$ be local operator spaces. Then $X\otimes^{lmax}Y$ coincides with the local operator space projective $X {\otimes}_{\wedge} Y$.
\end{theorem}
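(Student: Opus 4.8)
The plan is to peel the statement apart level by level and reduce it to the corresponding fact for ordinary operator spaces, namely that for operator spaces $E$ and $F$ the operator space structure induced on $E\otimes F$ by the embedding $E\otimes F\subseteq S_E\otimes_{\max}S_F$ is exactly the operator space projective tensor product $E\otimes_{\wedge}F$ (see \cite[Section 5]{kavruk2011tensor}). So first I would fix presentations $X=\underset{\longleftarrow}{\lim}\,X_\rho$ and $Y=\underset{\longleftarrow}{\lim}\,Y_\sigma$ as projective limits of operator spaces, $X_\rho$ and $Y_\sigma$ being the operator spaces attached to the defining matrix seminorms $p_\rho$ and $q_\sigma$. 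By the remark on $S^l_V$ and projective limits we have $S^l_X\cong\underset{\longleftarrow}{\lim}\,S_{X_\rho}$ and $S^l_Y\cong\underset{\longleftarrow}{\lim}\,S_{Y_\sigma}$, and by the remark relating $\mathrm{lmax}$ to projective limits (together with Proposition \ref{opsys_tp}) we get $S^l_X\otimes_{\mathrm{lmax}}S^l_Y\cong\underset{\longleftarrow}{\lim}\,S_{X_\rho}\otimes_{\max}S_{Y_\sigma}$.

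Next I would observe that forming an induced (local) operator space structure commutes with projective limits. Indeed, by Proposition \ref{seminorm} the matrix seminorms of a local operator system are given by an infimum formula in its matricial cones, and by (the argument of) Proposition \ref{closure} that formula is unchanged when the cones are replaced by their Archimedeanizations; moreover in a projective limit it is evaluated coordinatewise. Hence the local operator space structure that $S^l_X\otimes_{\mathrm{lmax}}S^l_Y$ induces on $X\otimes Y$ is, up to the index identification $\Omega\cong\Gamma\times\Lambda$, the projective limit over $(\rho,\sigma)$ of the operator space structures that $S_{X_\rho}\otimes_{\max}S_{Y_\sigma}$ induces on $X_\rho\otimes Y_\sigma$. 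Applying the operator space fact quoted above at each level gives
\[
X\otimes^{\lmax}Y\;\cong\;\underset{\longleftarrow}{\lim}\,\bigl(X_\rho\otimes_{\wedge}Y_\sigma\bigr).
\]

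It then remains to identify $\underset{\longleftarrow}{\lim}(X_\rho\otimes_{\wedge}Y_\sigma)$ with the local operator space projective tensor product $X\otimes_{\wedge}Y$ of \cite[Section 3]{webster1997local}. For this I would check directly that Webster's defining matrix seminorm of $X\otimes_{\wedge}Y$ at the index $(\rho,\sigma)$ is
\[
\Vert u\Vert^{\wedge,n}_{(\rho,\sigma)}=\inf\bigl\{\Vert\alpha\Vert\,p^{r}_\rho(x)\,q^{s}_\sigma(y)\,\Vert\beta\Vert:\;u=\alpha(x\otimes y)\beta,\;x\in M_r(X),\;y\in M_s(Y),\;\alpha\in M_{n,rs},\;\beta\in M_{rs,n}\bigr\},
\]
an expression that sees $x$ only through its image in $M_r(X_\rho)$ and $y$ only through its image in $M_s(Y_\sigma)$, hence coincides with the operator space projective tensor norm on $M_n(X_\rho\otimes Y_\sigma)$; this makes the seminorm system of $X\otimes_{\wedge}Y$ equal to that of $\underset{\longleftarrow}{\lim}(X_\rho\otimes_{\wedge}Y_\sigma)$. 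An alternative route is via universal properties: both $X\otimes^{\lmax}Y$ and $X\otimes_{\wedge}Y$ linearize jointly locally completely bounded bilinear maps out of $X\times Y$ (for $X\otimes^{\lmax}Y$ this follows from Lemma \ref{1}, Proposition \ref{1.9} and the maximality of $\mathrm{lmax}$, as in \cite[Section 5]{kavruk2011tensor}), and since any such bilinear map into a local operator space factors through the defining levels the two universal objects agree. Either route yields $X\otimes^{\lmax}Y\cong X\otimes_{\wedge}Y$.

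The part I expect to be the main obstacle is precisely this last identification: verifying cleanly that the local operator space projective tensor product commutes with projective limits and that the index set $\Omega\cong\Gamma\times\Lambda$ lines up (a cofinality check on the directed set of pairs $(\rho,\sigma)$). Everything before it is bookkeeping layered on top of the operator space result of \cite{kavruk2011tensor} and the two remarks recorded above.
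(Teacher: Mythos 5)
Your argument is correct in outline, but it takes a genuinely different route from the paper's. The paper's proof is a one‑line instruction to repeat the computation of \cite[Theorem 5.9]{kavruk2011tensor} verbatim at each seminorm index, i.e.\ to show directly that the seminorm induced on $M_n(X\otimes Y)$ by the cones $\mathcal{T}^{2n(\mathrm{lmax})}_{(\rho,\sigma)}$ of $S^l_X\otimes_{\mathrm{lmax}}S^l_Y$ coincides with Webster's projective matrix seminorm. You instead use the operator space theorem as a black box: pass to the defining levels $X_\rho,Y_\sigma$, invoke the remarks $S^l_X\cong\underset{\longleftarrow}{\lim}S_{X_\rho}$ and $V\otimes_{\mathrm{lmax}}W\cong\underset{\longleftarrow}{\lim}V_\alpha\otimes_{\max}W_\beta$, apply the operator space result levelwise, and reassemble. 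What this buys is a clean separation between the already‑known operator space fact and pure projective‑limit bookkeeping; what it costs is two reductions the direct route avoids. Reduction (i), that the induced local operator space structure commutes with the projective limit, is unproblematic: by Proposition \ref{seminorm} the $(\rho,\sigma)$‑seminorm depends only on the cone $\mathcal{T}^{2n}_{(\rho,\sigma)}$, which is the preimage of the positive cone of $M_{2n}\bigl(S_{X_\rho}\otimes_{\max}S_{Y_\sigma}\bigr)$, so the seminorm is the pullback of the norm induced on $X_\rho\otimes Y_\sigma$, and the projections onto the algebraic tensor products at each level are surjective. Reduction (ii), the identification of Webster's $X\otimes_{\wedge}Y$ with $\underset{\longleftarrow}{\lim}(X_\rho\otimes_{\wedge}Y_\sigma)$, is the one place your sketch is thin: the infimum defining $\Vert u\Vert^{\wedge,n}_{(\rho,\sigma)}$ runs over representations of $u$ in $M_n(X\otimes Y)$, not over representations of its image in $M_n(X_\rho\otimes Y_\sigma)$, so to get equality with the levelwise projective norm you must both observe that the kernel of $\pi^{(n)}_\rho\otimes\pi^{(n)}_\sigma$ has seminorm zero (it is spanned by elementary tensors with one factor of seminorm zero, and the projective seminorm is subadditive) and lift an arbitrary representation of the image, which works because $X_\rho=X/\ker p_\rho$ carries the quotient matrix norms. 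Neither point is hard, but both should be stated; with them your proof is complete and, in my view, more transparent than the unwritten computation the paper points to.
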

\begin{proof} This result can be done on the same lines \cite[Theorem 5.9]{kavruk2011tensor}
with seminorms.
\end{proof}
\begin{proposition}
	Let $(V,\{V_\alpha^+\})$ and $(W,\{W_\beta^+\})$ be A.L.O.U spaces. Equip the tensor product $V\otimes W$ with the Archimedeanization of the cone 
	$P^{(\alpha,\beta)}_{lmax}=\{\sum_{i=1}^{k}v_i\otimes w_i: v_i\in V_\alpha^+,w_i\in W_\beta^+ \text{ and }k \in \mathbb{N}\}$. Then $LOMAX(V) \otimes_{lmax} LOMAX(W)=LOMAX(V \otimes W)$.
\end{proposition}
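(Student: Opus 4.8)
The plan is to show that the families of cones defining $LOMAX(V)\otimes_{\mathrm{lmax}}LOMAX(W)$ and $LOMAX(V\otimes W)$ coincide at every matrix level; since both are local operator systems on $V\otimes W$ with Archimedean matrix order unit $e_V\otimes e_W$, this is all that is needed. Here $LOMAX(V\otimes W)$ is, by definition, the Archimedeanization of the maximal local matrix ordering of the A.L.O.U.\ space $\bigl(V\otimes W,\{P^{(\alpha,\beta)}_{\mathrm{lmax}}\}\bigr)$, whose $n$-th cone is
\[
(\mathcal{D}^{n}_{(\alpha,\beta)})^{\max}(V\otimes W)=\bigl\{\gamma\,\mathrm{diag}(z_1,\dots,z_p)\,\gamma^{*}:\gamma\in M_{n,p},\ z_i\in P^{(\alpha,\beta)}_{\mathrm{lmax}},\ p\in\mathbb{N}\bigr\}.
\]
I would compare both structures with this bare cone and then pass to Archimedeanizations.

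First I would settle the ground level. The cones of $LOMAX(V)$ and $LOMAX(W)$ are $V_\alpha^{+}$, $W_\beta^{+}$ at level one and the Archimedeanizations of $(\mathcal{D}^{k}_\alpha)^{\max}(V)=\{\gamma\,\mathrm{diag}(v_i)\gamma^{*}\}$, $(\mathcal{D}^{m}_\beta)^{\max}(W)=\{\delta\,\mathrm{diag}(w_j)\delta^{*}\}$ at higher levels (by the description of the maximal cones proved earlier). Since $\mathcal{K}^{1(\mathrm{lmax})}_{(\alpha,\beta)}$ is a cone containing every $v\otimes w$ with $v\in V_\alpha^{+}$, $w\in W_\beta^{+}$, it contains $P^{(\alpha,\beta)}_{\mathrm{lmax}}$; conversely, for $P=\gamma\,\mathrm{diag}(v_i)\gamma^{*}$, $Q=\delta\,\mathrm{diag}(w_j)\delta^{*}$ and $\lambda\in M_{1,km}$ one has $\lambda(P\otimes Q)\lambda^{*}=\mu\,\mathrm{diag}(v_i\otimes w_j)\mu^{*}=\sum_l|\mu_l|^{2}\,v_{i_l}\otimes w_{j_l}\in P^{(\alpha,\beta)}_{\mathrm{lmax}}$ with $\mu=\lambda(\gamma\otimes\delta)$, so with bare input cones $\mathcal{K}^{1(\mathrm{lmax})}_{(\alpha,\beta)}=P^{(\alpha,\beta)}_{\mathrm{lmax}}$; by Proposition \ref{closure} the Archimedeanized input cones only enlarge $\mathcal{K}^{1(\mathrm{lmax})}_{(\alpha,\beta)}$ inside the closure of $P^{(\alpha,\beta)}_{\mathrm{lmax}}$. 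Archimedeanizing, the ground cone of $LOMAX(V)\otimes_{\mathrm{lmax}}LOMAX(W)$ is precisely the Archimedeanization of $\{P^{(\alpha,\beta)}_{\mathrm{lmax}}\}$, i.e.\ the ground cone of $LOMAX(V\otimes W)$, so the two local operator systems are built on the same A.L.O.U.\ space.

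Next I would prove that $\mathcal{T}^{n(\mathrm{lmax})}_{(\alpha,\beta)}$ is contained in the $n$-th cone of $LOMAX(V\otimes W)$. As Archimedeanization is monotone and idempotent, and (as at the ground level) replacing the bare input maximal cones by their Archimedean closures does not change the Archimedeanization of $\mathcal{K}^{n(\mathrm{lmax})}_{(\alpha,\beta)}$, it suffices to check $\mathcal{K}^{n(\mathrm{lmax})}_{(\alpha,\beta)}\subseteq(\mathcal{D}^{n}_{(\alpha,\beta)})^{\max}(V\otimes W)$ for bare inputs. For $P=\gamma\,\mathrm{diag}(v_i)\gamma^{*}$, $Q=\delta\,\mathrm{diag}(w_j)\delta^{*}$ and $\lambda\in M_{n,km}$, the identity $(\gamma\otimes\delta)\,\mathrm{diag}(v_i\otimes w_j)\,(\gamma\otimes\delta)^{*}=P\otimes Q$ yields
\[
\lambda(P\otimes Q)\lambda^{*}=\bigl(\lambda(\gamma\otimes\delta)\bigr)\,\mathrm{diag}(v_i\otimes w_j)\,\bigl(\lambda(\gamma\otimes\delta)\bigr)^{*},
\]
and since each $v_i\otimes w_j\in P^{(\alpha,\beta)}_{\mathrm{lmax}}$ this is of the form $\rho\,\mathrm{diag}(\text{elements of }P^{(\alpha,\beta)}_{\mathrm{lmax}})\,\rho^{*}$, hence lies in $(\mathcal{D}^{n}_{(\alpha,\beta)})^{\max}(V\otimes W)$ by the displayed description. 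Taking Archimedeanizations gives the inclusion.

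The reverse inclusion then comes for free: by Lemma \ref{lomaxcone} (and the Remark following it, which performs the Archimedeanization), $LOMAX(V\otimes W)$ has the \emph{smallest} matrix cones among all local operator system structures on $V\otimes W$ with ground cone the Archimedeanization of $\{P^{(\alpha,\beta)}_{\mathrm{lmax}}\}$; by the previous paragraph $LOMAX(V)\otimes_{\mathrm{lmax}}LOMAX(W)$ is one such structure, so its cones contain those of $LOMAX(V\otimes W)$. Combining with the preceding step gives equality of all cones, hence the claim. The step I expect to be the main obstacle is the bookkeeping of the four Archimedeanizations in play — one each for $LOMAX(V)$, $LOMAX(W)$, the $\mathrm{lmax}$ tensor product and $LOMAX(V\otimes W)$: one must verify carefully that building $\mathcal{K}^{n(\mathrm{lmax})}_{(\alpha,\beta)}$ from the Archimedean closures of the input maximal cones yields the same Archimedeanization as building it from the bare cones, and that the maximal cone of the tensor-product A.L.O.U.\ space is unaffected by replacing $P^{(\alpha,\beta)}_{\mathrm{lmax}}$ by its Archimedeanization. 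Proposition \ref{closure}, which identifies the Archimedean closure of a cone with its seminorm closure, is what makes these limiting arguments go through. Alternatively, one can route the argument through projective limits: writing $V=\underset{\longleftarrow}{\lim}V_\alpha$ and $W=\underset{\longleftarrow}{\lim}W_\beta$, one has $LOMAX(V)\otimes_{\mathrm{lmax}}LOMAX(W)\cong\underset{\longleftarrow}{\lim}\,OMAX(V_\alpha)\otimes_{\max}OMAX(W_\beta)$, and then applies the operator-system identity $OMAX(V_\alpha)\otimes_{\max}OMAX(W_\beta)=OMAX(V_\alpha\otimes W_\beta)$ of \cite{kavruk2011tensor} termwise.
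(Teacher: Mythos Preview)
Your proposal is correct and follows essentially the same route as the paper: reduce to equality of the pre-Archimedeanization cones $\mathcal{K}^{n(\mathrm{lmax})}_{(\alpha,\beta)}$ and $(\mathcal{D}^{n}_{(\alpha,\beta)})^{\max}(V\otimes W)$, then Archimedeanize. The only notable difference is in how the two inclusions are distributed: the paper proves $(\mathcal{D}^{n}_{(\alpha,\beta)})^{\max}(V\otimes W)\subseteq \mathcal{K}^{n(\mathrm{lmax})}_{(\alpha,\beta)}$ by a direct rewriting (expanding $\sum_j a_j\otimes\bigl(\sum_i v_i^j\otimes w_i^j\bigr)$) and the reverse inclusion by compatibility, whereas you do the reverse inclusion by the explicit $\mathrm{diag}$ computation and invoke Lemma~\ref{lomaxcone} (minimality of the $LOMAX$ cones) for the forward one. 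Both pairings work and are interchangeable; your extra care with the four Archimedeanizations, and the projective-limit alternative via $OMAX(V_\alpha)\otimes_{\max}OMAX(W_\beta)=OMAX(V_\alpha\otimes W_\beta)$, are points the paper leaves implicit.
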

\begin{proof} 	We have that local matrix ordering on $LOMAX(V)$ is the Archimeanization of $\{D_n^{lmax (\alpha)}(V)\}_{n=1}^\infty$ where $D_n^{lmax (\alpha)}(V)=\{\sum_{j=1}^{k}a_j \otimes v_j , a_j \in M_n^+,v_j\in V_\alpha^+,k\in \mathbb{N}\}$. Similarly we have $\{D_n^{lmax (\beta)}(W)\}_{n=1}^\infty$ with resp. to the cone $W_\beta^+$ and $\{D_n^{lmax (\alpha,\beta)}(V \otimes W)\}_{n=1}^\infty$ with respect to cone $P^{(\alpha,\beta)}_{lmax}$. For the required result we need to show that $D_n^{lmax (\alpha,\beta)}(V \otimes W)=\{A(P \otimes Q)A^*: P\in D_k^{lmax (\alpha)}(V),Q\in D_m^{lmax (\beta)}(W), A\in M_{n,km}\}$.
	Let $D_n^{(\alpha,\beta)}$ denotes the right hand side of the last equation. If $a_j\in M_n^+$ and $\sum_{i=1}^{k_j}v_i^j\otimes w_i^j \in P_{lmax}^{(\alpha,\beta)},j=1,..,l$ where $v_i^j\in V_\alpha^+$ and $w_i^J\in W_\beta^+$ then $\sum_{j=1}^{l}a_j\otimes \bigg(\sum_{i=1}^{k_j}v_i^j\otimes w_i^j\bigg)=\sum_{j,i}a_j\otimes v_i^j \otimes w_i^j$. Since $\sum_i a_j \otimes v_i^j \in D_n^{lmax(\alpha)}(V)$ for each $j$ we have that $\sum_{j,i}a_j\otimes v_i^j \otimes w_i^j\in D_n^{(\alpha,\beta)}(V)$. Thus we have that $D_n^{lmax(\alpha,\beta)}(V \otimes W) \subseteq D_n^{(\alpha,\beta)}$. Now we will prove the other containment, which is obvious using the compatibility of the family $\{D_n^{lmax(\alpha,\beta)}(V \otimes W)\}$. 
	\end{proof}

%

\begin{remark} One can also relate the notion of nuclearity to the category of local operator system. Let $\{V_\alpha: \alpha \in \Gamma\}$ be projective system of $(\eta,\gamma)$-nuclear operator systems then it follows from Proposition \ref{opsys_tp} that, $V=\underset{\longleftarrow}{\lim}V_\alpha$ is a $(\eta_l,\gamma_l)$-nuclear local operator system.	
\end{remark}



\section*{Acknowledgements}
\noindent Research of the first author is supported by the DST-INSPIRE Fellowship (Grant no. IF180802).

\bibliographystyle{plain}
\bibliography{LocalOperatorSystem}
\end{document}